\documentclass[12pt,dvipdfmx]{amsart}

\usepackage{amsmath}
\usepackage{amssymb}
\usepackage{amsfonts}
\usepackage{amsthm}
\usepackage{amscd}
\usepackage{latexsym}
\usepackage[dvipdfmx]{graphicx}
\usepackage{color}
\usepackage{bxwareki}
\usepackage[all]{xy}

\newcommand{\wt}{\widetilde}
\newcommand{\wh}{\widehat}
\newcommand{\supp}{{\operatorname{supp\,}}}

\newcommand{\ispa}[1]{\langle \,#1 \,\rangle } 
 
\newcommand{\comp}{{\scriptstyle \circ}} 
 
\newcommand{\spec}{\operatorname{Spec}\nolimits}
 
\newcommand{\ol}{\overline}

\newcommand{\mf}{\mathfrak}
\newcommand{\mb}{\mathbb}
\newcommand{\mbf}{\mathbf}
\newcommand{\mr}{\mathrm}
\newcommand{\mc}{\mathcal}
\newcommand{\msc}[1]{\mbox{\textsc{#1}}}

\newcommand{\ve}{\varepsilon}
\newcommand{\dsp}{\displaystyle}

\newtheorem{thm}{{\bf Theorem}}[section]
\newtheorem*{thm*}{{\bf Theorem}}
\newtheorem{cor}[thm]{{\bf Corollary}}
\newtheorem{lem}[thm]{{\bf Lemma}}
\newtheorem{prop}[thm]{{\bf Proposition}}
\theoremstyle{definition}

\newtheorem*{rem}{{\bf Remark}}

\renewenvironment{proof}%
{\def\psymbol{{\bf Proof: }\hspace{2pt}}
\par\noindent\psymbol}%
{\def\qed{$\blacksquare$}
\hfill\qed\par\bigskip}

\begin{document}
\title[Horizontal Laplacian with integrable horizontal distribution]
{The horizontal Laplacian of a Riemannian submersion \\ 
with totally geodesic fibers \\
and an integrable horizontal distribution}
\author{Tatsuya Tate}
\address{Mathematical Institute, Graduate School of Sciences, Tohoku University, 
Aoba, Sendai 980-8578, Japan. }
\email{tatsuya.tate.c6@tohoku.ac.jp}
\thanks{The author is partially supported by JSPS KAKENHI Grant Number 23K25769, 24K06703.}

\begin{abstract}
The purpose of this note is to study spectral properties of the horizontal Laplacian 
of a Riemannian submersion with totally geodesic fibers and an integrable horizontal distribution. 
We show that the horizontal Laplacian is unitarily equivalent to 
a twisted Laplacian acting on the space of sections of a certain infinite-rank 
flat vector bundle over the base manifold of the Riemannian submersion. 
We give an application of this interpretation to the asymptotic behavior 
of the scaled first nonzero eigenvalue of the canonical variations introduced 
by B\'{e}rard-Bergery and Bourguignon. 
Our approach enables us to compare the horizontal Laplacian 
with the usual Laplacian on a Riemannian covering over the base manifold, 
and, when the holonomy group is infinite and amenable, we prove a coincidence 
of the essential spectrum, which strengthen, in our special setup, 
a result due to Kordyukov in the context of geometric analysis on foliated manifolds. 
\end{abstract}

\renewcommand{\thefootnote}{\fnsymbol{footnote}}
\renewcommand{\labelenumi}{{\rm (\arabic{enumi})}}
\renewcommand{\labelenumii}{{\rm (\alph{enumii})}}

\maketitle

\section{Introduction}\label{INT}
In 1982, B\'{e}rard-Bergery and Bourguignon \cite{BBB} introduced the 
{\it horizontal Laplacian} on the total space of a Riemannian submersion 
as a difference between the usual Laplacian and the {\it vertical Laplacian}, 
and they used it to study spectral properties of the Laplacian. 
One of their motivation was to systematically understand Urakawa's example 
\cite{Ur} of the metric on the $3$-sphere whose Laplacian has the first nonzero eigenvalue with multplicity 7. 
This metric, which is often called the {\it Berger metric}, 
is defined by a scaling along the fibers of the Hopf fibration. 
Taking this into account, they introduced the so-called {\it canonical variations} 
of the given Riemannian submersion. 
The horizontal Laplacian is well-behaved when the fibers of the submersion are totally geodesic, 
and it becomes a useful tool to understand the behavior of the first nonzero eigenvalues of the canonical variations. 
Since then, the study of spectral properties of Laplacians on Riemannian submersions 
with totally geodesic fibers is one of major subjects in Spectral Geometry. 
For example, Besson \cite{Be} obtained a characterization of the Riemannian product 
in terms of the equality condition of an inequality on the heat kernel 
Besson-Bordoni \cite{BB} studied the eigenvalues of the Laplacian 
using a representation theory of the isometry group of the fiber of the Riemannian submersion.  
Recently, Narita \cite{N} gives a sufficient condition in terms of the Ricci curvatures 
for the divergence of the scaled first nonzero eigenvalues of the canonical variations with the parameter goes to infinity. 
The list of references in \cite{N} will be helpful to find other works closely related to this vast topic. 
\par
When the horizontal distribution of the Riemannian submersion is integrable, 
the Riemannian structure is very close to the Riemannian product. 
However, as is shown in the example of the irrationally `tilted' Riemannian metric on the torus, 
which is mentioned in \cite{BBB}, the spectral properties of the corresponding 
horizontal Laplacian is still far from trivial. 
Our purpose in this paper is to give several spectral properties of the horizontal Laplacian 
under the strong assumption that, besides that the fibers are totally geodesic, the horizontal distribution is integrable. 
The properties we obtain here comes from an interpretation of the horizontal Laplacian 
as a twisted Laplacian on a certain flat vector bundle. 
To explain our setup and results, let us prepare some notation. 
Let $(M,g)$, $(B,j)$ be compact connected 
Riemannian manifolds of dimension $n$ and $p$, respectively, 
and let $\pi \colon M \to B$ be a Riemannian submersion. 
For each $x \in M$, set $\mc{V}_{x}=\ker d\pi_{x}$ and write $\mc{H}_{x}$ for the 
orthogonal complement of $\mc{V}_{x}^{\perp}$ in the tangent space $T_{x}M$. 
The distributions $\mc{V}$ and $\mc{H}$ are called the {\it vertical} and 
the {\it horizontal distributions}, respectively. 
The Laplacian on $(M,g)$, denoted by $\Delta_{M}$, has a decomposition, 
\[
\Delta_{M}=\Delta_{H} + \Delta_{V}, 
\]
where $\Delta_{V}$ is the vertical Laplacian defined by 
\[
(\Delta_{V}f)(x)=(\Delta_{F_{\pi(x)}} f|_{\pi^{-1}(\pi(x))}) (x), 
\]
with the Laplacian $\Delta_{F_{b}}$ on the fiber $F_{b}=\pi^{-1}(b)$ over a point $b \in B$ 
equipped with the metric induced by $g$, and, by definition, $\Delta_{H}=\Delta_{M} - \Delta_{V}$. 
The differential operator $\Delta_{H}$ is called the horizontal Laplacian. 
It was observed by B\'{e}rard-Bergery-Bourguignon \cite{BBB} that $\Delta_{H}$ and $\Delta_{V}$ 
are commutative on $C^{\infty}(M)$ if the Riemannian submersion has totally geodesic fibers. 
Suppose now that, for any $b \in B$, the fiber $F_{b}$ is a totally geodesic submanifold of $M$.  
Under this assumption, Hermann \cite{Her1} shows that every two fibers are isometric to each other, 
and $\pi \colon M \to B$ becomes a fiber bundle with structure group the isomorphism group, $\mr{Isom}(F)$, 
of a typical fiber $F=F_{b_{o}}$ over a fixed base-point $b_{o} \in B$. 
Since $M$ is compact and $\Delta_{M}$ is elliptic, there exists an orthonormal basis 
of $L^{2}(M)$ consisting of smooth joint eigenfunctions of $\Delta_{M}$, $\Delta_{H}$ and $\Delta_{V}$. 
Thus the operators $\Delta_{M}$, $\Delta_{V}$, $\Delta_{H}$, 
with the space of smooth functions on $M$ as their domain, are essentially self-adjoint. 
The closure of these operators are denoted by the same notation. 
In general, let $\spec(S)$ denote the spectrum of a self-adjoint operator $S$ on a Hilbert space 
and let $\spec_{\mr{ess}}(S)$ denote the essential spectrum of $S$. 
As a set, we have $\spec(\Delta_{V})=\spec(\Delta_{F})$, where $\Delta_{F}$ denotes the 
Laplacian of the metric on $F$ induced by $g$, and $\spec(\Delta_{V})$ consists of 
eigenvalues, but each eigenvalue of $\Delta_{V}$ could have an infinite multiplicity. 
More subtle is the spectrum of the horizontal Laplacian $\Delta_{H}$, 
which is our main object under the assumption that the horizontal distribution $\mc{H}$ is integrable. 
\par
Now, suppose that each of the fiber of $\pi \colon M \to B$ is connected and totally geodesic, 
and furthermore, the horizontal distribution $\mc{H}$ is integrable. 
Under these assumptions, we have the holonomy representation (see Lemma \ref{Hol1}) 
\begin{equation}\label{HO}
\mf{h} \colon \pi_{1}(B,b_{o}) \to \mr{Isom}(F)
\end{equation}
of the fundamental group $\pi_{1}(B,b_{o})$ with the base-point $b_{o} \in B$, 
and its image is the holonomy group $\mr{Hol}(b_{o})$ at $b_{o}$ 
(see Section \ref{FL} for the definition). 
Let $\wh{p} \colon \wh{B} \to B$ be the Riemannian universal covering over $B$. 
Later, we impose assumptions on $\mr{Hol}(b_{o})$ instead of $\pi_{1}(B,b_{o})$. 
Thus, we prepare a slightly different Riemannian covering. 
Define $B_{H}=\wh{B}/\mr{ker}(\mf{h})$ with a natural projection $\varpi_{H} \colon B_{H} \to B$. 
Equip the pull-back metric $\varpi_{H}^{*}j$ on $B_{H}$ so that $\varpi_{H} \colon B_{H} \to B$ is a 
regular Riemannian covering. The covering transformation group of $\varpi_{H}$ is $\mr{Hol}(b_{o})$. 
Since $\mr{Hol}(b_{o})$ is a subgroup of $\mr{Isom}(F)$, we have a unitary representation 
\begin{equation}\label{HUR}
\rho \colon \mr{Hol}(b_{o}) \to U(L^{2}(F)), \quad 
(\rho(h)f)(p)=f(h^{-1}p) \ \ (h \in \mr{Hol}(b_{o}), f \in L^{2}(F),\ p \in F), 
\end{equation}
and the corresponding flat vector bundle 
\begin{equation}\label{FVB1}
\pi_{\rho} \colon \mc{E}_{\rho}=B_{H} \times_{\rho} L^{2}(F) \to B, 
\end{equation}
whose fibers are isometrically isomorphic to $L^{2}(F)$. 
As in \cite{Su2}, all differential operators on $B$ are extended to differential operators acting 
on the space of smooth sections $C^{\infty}(\mc{E}_{\rho})$ of $\mc{E}_{\rho}$. 
Let $\Delta_{\rho}$ denote the extension of the Laplacian $\Delta_{B}$ on $B$ to $C^{\infty}(\mc{E}_{\rho})$, 
which is often called the {\it Laplacian twisted by the representation} $\rho$. 
It is well-known that the operator $\Delta_{\rho}$ is essentially self-adjoint on $C^{\infty}(\mc{E}_{\rho})$ 
(which is mentioned in \cite{Su1}, and its proof using the heat kernel can be found in \cite{Su2}). 
For $s_{1}, s_{2} \in C^{\infty}(\mc{E}_{\rho})$, we define the inner product $\ispa{s_{1},s_{2}}_{L^{2}(\mc{E}_{\rho})}$ by 
\[
\ispa{s_{1},s_{2}}_{L^{2}(\mc{E}_{\rho})} =
\int_{B} \ispa{s_{1}(b), s_{2}(b)}_{\pi_{\rho}^{-1}(b)} \,d\mu_{B}(b), 
\]
where $d\mu_{B}$ is the Riemannian volume measure of $(B,j)$ 
and $\ispa{\cdot, \cdot}_{\pi_{\rho}^{-1}(b)}$ is the Hilbert space inner product on the fiber $\pi_{\rho}^{-1}(b)$. 
Let $L^{2}(\mc{E}_{\rho})$ denote the Hilbert space obtained 
by completing $C^{\infty}(\mc{E}_{\rho})$ with respect to the norm defined by the above inner product. 
\par
When $M$ is the Riemannian product $B \times F$, Fubini's theorem shows 
that $L^{2}(M)$ is isomorphic to the space of $L^{2}$-sections of the trivial vector bundle $B \times L^{2}(F)$ 
over $B$ and the horizontal Laplacian on $B \times F$ is unitarily equivalent to the Laplacian acting 
on the space of sections of $B \times L^{2}(F) \to B$. 
The following theorem generalizes this observation. 
%
%
\begin{thm}\label{MN1}
Under the assumptions mentioned above, 
$\Delta_{\rho}$ is unitarily equivalent to $\Delta_{H}$. 
\end{thm}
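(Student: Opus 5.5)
The plan is to realize $M$ as a flat bundle $B_{H}\times_{\mr{Hol}(b_{o})} F$ and then transport $L^{2}(M)$ to $L^{2}(\mc{E}_{\rho})$ by Fubini on the covering. First I would construct a diffeomorphism $\Phi\colon B_{H}\times F \to M$ that intertwines the $\mr{Hol}(b_{o})$-actions. Since $\mc{H}$ is integrable and the fibers are totally geodesic, horizontal lifts of curves in $B$ move fibers by isometries (Hermann). Because $\mc{H}$ is integrable, the horizontal lift of a curve depends only on its homotopy class, which gives the holonomy representation $\mf{h}$ of Lemma \ref{Hol1}. For $\wh{b}\in\wh{B}$ represented by a path $\gamma$ from $b_{o}$, and $p\in F$, I would set $\wh{\Phi}(\wh{b},p)$ to be the endpoint of the horizontal lift of $\gamma$ starting at $p$. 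This is well defined on $\wh{B}\times F$. It is constant on $\ker\mf{h}$-orbits in the $\wh{B}$-variable, so it descends to $B_{H}\times F$, and it satisfies $\Phi(\wh{b}h,p)=\Phi(\wh{b},\mf{h}(h)p)$ up to the convention for the action. Hence $\Phi$ induces a diffeomorphism $B_{H}\times_{\mr{Hol}(b_{o})}F\cong M$. Under $\Phi$, the leaves $\Phi(\cdot,p)$ are horizontal, and $\Phi(\wh{b},\cdot)$ is an isometry onto the fiber. Therefore the pulled-back metric $\Phi^{*}g$ is locally the product metric $\varpi_{H}^{*}j\oplus g_{F}$. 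This is the local product structure, a de Rham-type splitting, and it follows since $\mc{H}\perp\mc{V}$, $d\pi$ is isometric on $\mc{H}$, and the fiber metrics are transported isometrically.

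Second, I would identify the operators locally. On the product $U\times F$, with $U\subset B_{H}$ an evenly covered open set, the function $f\circ\Phi$ is a function on a Riemannian product. There $\Delta_{M}=\Delta_{B}\otimes 1+1\otimes\Delta_{F}$ and $\Delta_{V}=1\otimes\Delta_{F}$, so $\Delta_{H}$ corresponds to $\Delta_{B}$ acting in the base variable.

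Third, I would define the unitary map. Sections of $\mc{E}_{\rho}$ are $\rho$-equivariant maps $s\colon B_{H}\to L^{2}(F)$, meaning $s(\wh{b}h)=\rho(h)^{-1}s(\wh{b})$ with the appropriate convention. For $f\in C^{\infty}(M)$, set $(Uf)(\wh{b})=f(\Phi(\wh{b},\cdot))\in C^{\infty}(F)\subset L^{2}(F)$. The equivariance of $\Phi$ gives exactly the $\rho$-equivariance, so $Uf\in C^{\infty}(\mc{E}_{\rho})$. By Fubini applied on a fundamental domain $D\subset B_{H}$, together with the local product form of the volume $d\mu_{M}=d\mu_{B}\,d\mu_{F}$, we get $\|Uf\|_{L^{2}(\mc{E}_{\rho})}=\|f\|_{L^{2}(M)}$. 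Surjectivity follows by running the construction backwards: an equivariant section defines an $L^{2}$ function on $B_{H}\times F$ that is invariant under $\mr{Hol}(b_{o})$, hence a function on $M$. This gives a unitary $U\colon L^{2}(M)\to L^{2}(\mc{E}_{\rho})$. By step two, $U\Delta_{H}f=\Delta_{\rho}Uf$ for $f\in C^{\infty}(M)$, since $\Delta_{\rho}$ is $\Delta_{B}$ applied to the equivariant $L^{2}(F)$-valued function on $B_{H}$.

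Finally, I would pass to closures. $\Delta_{H}$ is essentially self-adjoint on $C^{\infty}(M)$ and $\Delta_{\rho}$ is essentially self-adjoint on $C^{\infty}(\mc{E}_{\rho})$. So it suffices that $U$ maps $C^{\infty}(M)$ into a core for $\Delta_{\rho}$ and that $U^{-1}$ maps $C^{\infty}(\mc{E}_{\rho})$ into the domain of $\overline{\Delta_{H}}$. The first holds because $U\overline{\Delta_{H}}U^{-1}$ is then a self-adjoint operator agreeing with the essentially self-adjoint $\Delta_{\rho}$ on a set we must show is a core. I expect this to be the main technical point: smooth sections of $\mc{E}_{\rho}$ with values in $L^{2}(F)$ need not come from smooth functions on $M$. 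I would handle it by approximation. Use the joint spectral decomposition of $\Delta_{V}$, and the fact that $\rho$ commutes with $\Delta_{F}$ because $\mr{Hol}(b_{o})\subset\mr{Isom}(F)$. Each section then splits into finitely many $\Delta_{F}$-eigenspace components, up to small error. Within a fixed eigenspace of $\Delta_{F}$, which is finite-dimensional, smooth sections correspond to smooth functions on $M$. This shows that $U(C^{\infty}(M))$ is dense in the graph norm, and the unitary equivalence follows.
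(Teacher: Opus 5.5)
Your proposal is correct and follows essentially the same route as the paper. You realize $M$ as $B_{H}\times_{\mr{Hol}(b_{o})}F$ via horizontal lifts (as in Lemma \ref{Hol1}, Proposition \ref{ST1} and Lemma \ref{CHL}), take the pull-back unitary (your $U$, the paper's $W$), and handle the domain issue by truncating to finitely many $\Delta_{F}$-eigenspaces, exactly as in Lemma \ref{UEL3} and the closing argument of the proof. The only variation is that you check $U\Delta_{H}=\Delta_{\rho}U$ through the local product splitting $\Delta_{M}=\Delta_{B}\otimes 1+1\otimes\Delta_{F}$, $\Delta_{V}=1\otimes\Delta_{F}$, whereas the paper uses horizontal lifts of vector fields (Lemma \ref{UEL4}) together with $\nabla^{M}_{\ol{X}}\ol{X}=\ol{\nabla^{B}_{X}X}$; both rest on the same local isometry with $B_{H}\times F$.
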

It is natural to expect the assertion in Theorem $\ref{MN1}$ because, as is proved in Section $\ref{FL}$, 
$M$ is isometrically isomorphic, as a fiber bundle, to $B_{H} \times_{\mr{Hol}(b_{o})} F$. 
However, this rather natural Theorem $\ref{MN1}$ has some applications. 
For $a \geq 0$, we set 
\begin{equation}\label{ESP1}
C^{\infty}(M,\mu)=\{f \in C^{\infty}(M) \mid \Delta_{V}f=\mu f\}, \quad 
E(\mu,a)=\{f \in C^{\infty}(M, \mu) \mid \Delta_{H}f=af\}. 
\end{equation}
We denote $E(\mu)$ the eigenspace of $\Delta_{F}$ with the eigenvalue $\mu$. 
Then the unitary operator intertwining $\Delta_{H}$ with $\Delta_{\rho}$ 
maps $C^{\infty}(M,\mu)$ onto the space of smooth sections of the finite-dimensional flat vector bundle 
\begin{equation}\label{FVB2}
\mc{E}_{\rho_{\mu}}=\wh{B} \times_{\rho_{\mu}} E(\mu), 
\end{equation}
where $\rho_{\mu}$ denotes the restriction of $\rho$ on $E(\mu)$, and it intertwines 
the restriction of $\Delta_{H}$ on $C^{\infty}(M,\mu)$ with the twisted Laplacian $\Delta_{\rho_{\mu}}$. 
Therefore, the following is a direct consequence of Theorem $\ref{MN1}$. 
%
%
\begin{cor}\label{TEN}
Suppose that the assumptions mentioned above hold true. 
For each $\mu \in \spec(\Delta_{F})$, there are countably infinitely many $a \geq 0$ such that 
$E(\mu,a) \neq \{0\}$. Number such $a \geq 0$ as 
\[
0 \leq a_{0}(\mu) \leq a_{1}(\mu) \leq a_{2}(\mu) \leq \cdots \leq a_{k}(\mu) \leq \cdots,  
\]
where, a fixed number $a$ is multiply counted with $\dim E(\mu,a)$ times. 
Then, for each $\mu \in \spec(\Delta_{F})$, we have 
\[
a_{k}(\mu) \sim 4\pi C^{2/\dim(B)} k^{2/\dim B} \quad (k \to \infty), \quad 
C=\frac{\Gamma(\frac{p}{2}+1)}{m(\mu)\mr{vol}(B)}, 
\]
where $m(\mu)$ is the multiplicity of $\mu \in \spec(\Delta_{F})$. 
\end{cor}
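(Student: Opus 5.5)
The plan is to reduce Corollary \ref{TEN} to Weyl's law for a twisted Laplacian on a finite-rank flat Hermitian vector bundle. By Theorem \ref{MN1}, there is a unitary operator $U \colon L^{2}(M) \to L^{2}(\mc{E}_{\rho})$ that intertwines $\Delta_{H}$ with $\Delta_{\rho}$. As explained before the statement, $U$ maps $C^{\infty}(M,\mu)$ onto $C^{\infty}(\mc{E}_{\rho_{\mu}})$ and intertwines $\Delta_{H}|_{C^{\infty}(M,\mu)}$ with $\Delta_{\rho_{\mu}}$. I would verify this first: $E(\mu)$ is a finite-dimensional subspace of $L^{2}(F)$, and it is invariant under $\rho(\mr{Hol}(b_{o}))$ because isometries of $F$ commute with $\Delta_{F}$. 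Hence $\mc{E}_{\rho_{\mu}}$ is a flat subbundle of $\mc{E}_{\rho}$ of rank $m(\mu)=\dim E(\mu)$. Moreover, since the fibers are totally geodesic, $\Delta_{V}$ corresponds under $U$ to the fiberwise operator induced by $\Delta_{F}$. Consequently the spectral subspace $\ker(\Delta_{V}-\mu)$ corresponds to $L^{2}(\mc{E}_{\rho_{\mu}})$. It follows that $\dim E(\mu,a)$ equals the multiplicity of $a$ as an eigenvalue of $\Delta_{\rho_{\mu}}$.

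The second step is to note that $\Delta_{\rho_{\mu}}$ is an elliptic, nonnegative, self-adjoint operator on the compact manifold $B$. It acts on sections of a vector bundle of rank $m(\mu)$ carrying a flat unitary connection, and its principal symbol is $|\xi|^{2}_{j}\cdot \mathrm{Id}_{m(\mu)}$. Indeed, locally on an evenly covered open set we can trivialize $\mc{E}_{\rho_{\mu}}$ by flat orthonormal frames, and in such a frame $\Delta_{\rho_{\mu}}$ is $\Delta_{B}\otimes \mathrm{Id}$. Therefore the spectrum of $\Delta_{\rho_{\mu}}$ is discrete, with finite multiplicities, and accumulates only at $+\infty$. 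This gives the countably infinitely many $a$ with $E(\mu,a)\neq\{0\}$. Smoothness of the eigensections follows from elliptic regularity, so the eigenfunctions on $M$ lie in $C^{\infty}(M,\mu)$.

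The final step is Weyl's law for such a Laplace-type operator on a rank $m$ bundle over a $p$-dimensional compact manifold:
\[
N(\lambda)=\#\{k \mid a_{k}(\mu)\le\lambda\} \sim \frac{m\,\omega_{p}\,\mr{vol}(B)}{(2\pi)^{p}}\lambda^{p/2},\qquad \omega_{p}=\frac{\pi^{p/2}}{\Gamma(\frac p2+1)}.
\]
This follows from the local heat kernel asymptotics $\mr{tr}\,e^{-t\Delta_{\rho_{\mu}}}\sim m(4\pi t)^{-p/2}\mr{vol}(B)$ together with Karamata's Tauberian theorem. The heat asymptotics hold because, locally in flat frames, the heat kernel is $m$ copies of the scalar heat kernel up to exponentially small errors. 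Inverting $N(a_{k})\sim k$ gives
\[
a_{k}\sim 4\pi\left(\frac{\Gamma(\frac p2+1)}{m(\mu)\mr{vol}(B)}\right)^{2/p}k^{2/p},
\]
which is exactly the claimed formula.

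I expect the main obstacle to be the identification in the first step, namely that $U$ really carries the $\mu$-eigenspace of $\Delta_{V}$ onto the sections of $\mc{E}_{\rho_{\mu}}$ (with $\wh{B}$ versus $B_{H}$ being immaterial, since $\rho_{\mu}$ factors through $\mr{Hol}(b_{o})$). This is where the structure $M\cong B_{H}\times_{\mr{Hol}(b_{o})}F$ from Section \ref{FL} would be invoked. The remaining steps are standard.
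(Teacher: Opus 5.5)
Your proposal is correct and follows essentially the same route as the paper: $W$ carries $C^{\infty}(M,\mu)$ onto $C^{\infty}(\mc{E}_{\rho_{\mu}})$ (Lemma \ref{UED2}) and intertwines $\Delta_{H}$ with $\Delta_{\rho_{\mu}}$, and then Weyl's law for a Laplace-type operator on a flat bundle of rank $m(\mu)$ over $B$ gives the asymptotics. The only difference is that you spell out the heat-kernel/Karamata derivation and verify the constant, which the paper leaves to ``the general spectral theory for elliptic operators.''
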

The next application of Theorem $\ref{MN1}$ is about asymptotic behavior of 
the scaled first nonzero eigenvalue of the canonical variations $\{g_{t}\}_{t>0}$ of 
the metric $g$ on $M$, which is defined by (\cite{BBB}) 
\begin{enumerate}
\item[(i)] $g_{t}|_{\mc{V}_{x} \times \mc{H}_{x}}=0$; 
\item[(ii)] $g_{t}|_{\mc{V}_{x} \times \mc{V}_{x}}=t^{2}g|_{\mc{V}_{x} \times \mc{V}_{x}}$; 
\item[(iii)] $g_{t}|_{\mc{H}_{x} \times \mc{H}_{x}}=g|_{\mc{H}_{x} \times \mc{H}_{x}}$. 
\end{enumerate}
In general, for a Riemannian manifold $(N,h)$, the scaled first nonzero eigenvalue is defined by 
\begin{equation}\label{Ber}
\Lambda_{1}(N,h)=\lambda_{1}(N,h) \mr{vol}(N,h)^{2/\dim N}, 
\end{equation}
where $\lambda_{1}(N,h)$ is the first nonzero eigenvalue of the Laplacian 
associated with the Riemannian metric $h$. 
The quantity $\Lambda_{1}(N,h)$ is invariant under the scaling $h \mapsto ch$ 
by a positive constant $c$ and is used to understand the 
behavior of $\lambda_{1}(N,h)$ as a function of the metric $h$ on $N$. 
When $M$ is the Riemannian product, $B \times F$, we have $\dsp \lim_{t \to \infty} \Lambda_{1}(M,g_{t})=0$. 
However, for the `irrational torus' $T(a)$ with a quadratic irrational number $a$, 
there exists a constant $c>0$ such that $c \leq \Lambda_{1}(M,g_{t}) \leq 8\pi$ for $t \geq 1$. 
Due to the totally geodesic and the integrability assumptions, 
our $(M,g)$ is close to $B \times F$ with the Riemannian product metric. 
But, we have the following. 
\begin{thm}\label{BBC}
Suppose that $\mr{Hol}(b_{o})$ satisfies Kazhdan's property $\mr{(T)}$ and 
the action of $\mr{Hol}(b_{o})$ on $F$ is ergodic. Then we have 
\[
\lim_{t \to \infty} \Lambda_{1}(M,g_{t}) =+\infty. 
\]
\end{thm}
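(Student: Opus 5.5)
The plan is to use the standard formula for the canonical variation, $\Delta_{g_t}=\Delta_H+t^{-2}\Delta_V$, valid for totally geodesic fibers (B\'erard-Bergery--Bourguignon). The operators $\Delta_H$ and $\Delta_V$ commute, so we can use joint eigenfunctions. The spectrum of $\Delta_{g_t}$ is then $\{a+t^{-2}\mu : E(\mu,a)\neq\{0\}\}$, with $\mu\in\spec(\Delta_F)$. Since $\mr{vol}(M,g_t)=t^{\dim F}\mr{vol}(M,g)$, we have $\Lambda_1(M,g_t)=\lambda_1(g_t)\,t^{2(n-p)/n}\mr{vol}(M,g)^{2/n}$. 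So it suffices to show $\lambda_1(g_t)\,t^{2(n-p)/n}\to\infty$.

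First handle $\mu=0$. Here $E(0)$ consists of the constants, and $\rho_0$ is trivial. Hence the nonzero eigenvalues coming from $\mu=0$ are the nonzero eigenvalues of $\Delta_B$, which are bounded below by $\lambda_1(B)>0$, independently of $t$. For $\mu>0$ the eigenvalue is $a+t^{-2}\mu$. The main claim is that there exists $c>0$ with $a_0(\mu)\geq c$ for all $\mu\in\spec(\Delta_F)\setminus\{0\}$. Given this, $\lambda_1(g_t)\geq\min(\lambda_1(B),c)$ for all $t$. Since $n>p$ (we assume $\dim F\geq1$), it follows that $t^{2(n-p)/n}\lambda_1(g_t)\to\infty$, as required.

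To prove the claim, use Theorem \ref{MN1}. The restriction of $\Delta_H$ to $C^\infty(M,\mu)$ is unitarily equivalent to the twisted Laplacian $\Delta_{\rho_\mu}$ on the flat bundle $B_H\times_{\rho_\mu}E(\mu)$. We need a uniform lower bound on $\Delta_{\rho}$ restricted to $L^2(F)\ominus\mathbb{C}$, i.e.\ for $\rho_0:=\rho|_{L^2_0(F)}$. Ergodicity of the $\mr{Hol}(b_o)$-action on $F$ means that $\rho_0$ has no nonzero invariant vectors. Property (T), with a fixed finite generating set $S$ of $\mr{Hol}(b_o)$ (finitely generated as a quotient of $\pi_1(B)$), then gives a Kazhdan constant $\kappa>0$: for every unit vector $v\in L^2_0(F)$ there is $s\in S$ with $\|\rho(s)v-v\|\geq\kappa$.

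The remaining, and hardest, step is to convert this Kazhdan gap into a spectral gap for $\Delta_{\rho_0}$. This is a Brooks/Sunada-type comparison between twisted Laplacians and the combinatorial Laplacian of the representation. I would argue as follows. Take a smooth section $s$ of $\mc{E}_{\rho_0}$ with $\|s\|=1$, viewed as a $\mr{Hol}$-equivariant map $\tilde s\colon B_H\to L^2_0(F)$. Take a compact fundamental domain $D$, and for each generator $\gamma\in S$ connect points $x\in D$ to $\gamma x$ by paths of bounded length. Then
\[
\|\tilde s(\gamma x)-\tilde s(x)\|^2\leq C\int|\nabla\tilde s|^2
\]
along these paths. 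Averaging over $x\in D$ and using uniform cover by a finite family of balls yields $\sum_{\gamma\in S}\int_D\|\rho(\gamma)\tilde s(x)-\tilde s(x)\|^2\leq C'\langle\Delta_{\rho_0}s,s\rangle$.

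The Kazhdan inequality bounds the left side below, but only pointwise in $x$, so the argument needs more care. I would handle this by splitting: either $\tilde s$ is close in $L^2(D)$ to its average $\bar v=\mr{vol}(D)^{-1}\int_D\tilde s$, or it is not. In the second case a Poincaré inequality on $D$ (a connected compact domain, e.g.\ a Dirichlet domain) gives $\int_D|\nabla\tilde s|^2\gtrsim$ const. In the first case $\|\bar v\|\geq1/2$, and we apply the Kazhdan inequality to $\bar v$ together with the path estimate. Both cases give $\langle\Delta_{\rho_0}s,s\rangle\geq c\|s\|^2$ with $c$ depending only on $\kappa$ and the geometry of $B$. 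This establishes the claim, and hence the theorem.
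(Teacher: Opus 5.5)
Your proposal is correct and follows the paper's route. The spectral gap of $\Delta_H$ splits into $\lambda_1(B)$ from the trivial summand $E(0)=\mathbb{C}\mathbf{1}$ and the bottom of the spectrum of $\Delta_{\rho_o}$, which is bounded below by the Kazhdan constant of $\rho_o$ (no invariant vectors, by ergodicity). You then read off $\Lambda_1(M,g_t)\to\infty$ directly from $\lambda_1(g_t)\ge\min\{\lambda_1(B),c\}$ and $\mathrm{vol}(M,g_t)=t^{n-p}\mathrm{vol}(M,g)$, rather than quoting Lemma~\ref{SG} and \cite{BBB}.

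The only real difference is that you re-derive Sunada's inequality $C\kappa^2\le\lambda_0(\rho_o)$ instead of citing it. There your case split is unnecessary: the pointwise bound $\sum_{\gamma\in S}\|\rho(\gamma)\tilde s(x)-\tilde s(x)\|^2\ge\kappa^2\|\tilde s(x)\|^2$ can be integrated over $D$ directly and combined with your path (or Poincar\'e) estimate.
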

As is observed by Sullivan \cite{Sul}, there is an arithmetic lattice $\Gamma_{n}$ in $O(n,2)$ with $n \geq 3$ 
satisfying Kazhdan's property $\mr{(T)}$ and has an injective homomorphism $f \colon \Gamma_{n} \to SO(n+2)$ 
with a dense image. Hence, this lattice acts on the sphere ergodically. Since the arithmetic lattices 
are finitely presented (see, for example, \cite{DW}), 
there is a compact manifold $B$ such that $\pi_{1}(B)$ is isomorphic to $\Gamma_{n}$. 
The manifold $M=\wh{B} \times_{f} S^{n+1}$ with the Riemannian metric 
induced by the product metric on $\wh{B} \times S^{n+1}$ satisfies all of our assumptions 
(see Proposition $\ref{FPL}$), and hence Theorem $\ref{BBC}$ can be applied. 
\par
The horizontal Laplacian in our setup is an example of tangentially elliptic 
operators in the context of geometric analysis of foliated manifolds. 
In particular, Kordyukov \cite{Kor0}, \cite{Kor} obtained a `coincidence' theorem which asserts that, 
under an amenability assumption of the $C^{*}$-algebras defined by the holonomy groupoid and 
the assumption that each leaf of the foliation $\mc{H}$ is dense in $M$, 
the spectrum of a tangentially elliptic operator and that of its restriction to each leaf coincide as a set. 
See also \cite{An} for a further generalization. Related to Kordyukov's work, we have the following. 
\begin{thm}\label{BOT}
Under the assumptions mentioned above, we have 
$\spec(\Delta_{B_{H}}) \subset \spec(\Delta_{H})$. 
If the holonomy group $\mr{Hol}(b_{o})$ of $\pi \colon M \to B$ is infinite, 
we have 
\[
\spec(\Delta_{B_{H}}) =\spec_{\mr{ess}}(\Delta_{B_{H}}) \subset \spec_{\mr{ess}} (\Delta_{H}). 
\]
Next, suppose that $\mr{Hol}(b_{o})$ is amenable. 
Then we have $\spec(\Delta_{B_{H}}) = \spec(\Delta_{H})$. 
Furthermore, if $\mr{Hol}(b_{o})$ is infinite and amenable, 
then we have 
\[
\spec_{\mr{ess}}(\Delta_{B_{H}})=\spec_{\mr{ess}}(\Delta_{H})=\spec(\Delta_{H}). 
\]
\end{thm}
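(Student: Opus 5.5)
By Theorem \ref{MN1} we may replace $\Delta_{H}$ by the twisted Laplacian $\Delta_{\rho}$ on $L^{2}(\mc{E}_{\rho})$. We identify $L^{2}(\mc{E}_{\rho})$ with the $\mr{Hol}(b_{o})$-invariant vectors in $L^{2}_{\mr{loc}}(B_{H})\otimes L^{2}(F)$, i.e.\ with $\rho$-equivariant functions $s\colon B_{H}\to L^{2}(F)$, $s(\gamma y)=\rho(\gamma)s(y)$. In this model $\Delta_{\rho}$ acts as $\Delta_{B_{H}}\otimes I$. Equivalently, $\Delta_{\rho}$ is the operator obtained from $\Delta_{B_{H}}$ by the induced-representation (Sunada) construction. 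The proof then compares the spectrum of a Laplacian twisted by a unitary representation $\rho$ with that twisted by the regular representation $\lambda$ of $\Gamma=\mr{Hol}(b_{o})$. Note that twisting by $\lambda$ gives exactly $\Delta_{B_{H}}$ on $L^{2}(B_{H})$, since $L^{2}(B_{H})\cong B_{H}\times_{\lambda}\ell^{2}(\Gamma)$. The key tool is the standard fact (Sunada, also \cite{Su2}) that the map $\sigma\mapsto \spec(\Delta_{\sigma})$ is monotone with respect to weak containment of unitary representations. That is, if $\sigma_{1}\prec\sigma_{2}$ (weakly contained), then $\spec(\Delta_{\sigma_{1}})\subset\spec(\Delta_{\sigma_{2}})$. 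I would prove this by noting that $f(\Delta_{\sigma})$ for nice $f$ (e.g.\ heat operators or resolvents) is given by a kernel built from the $\Gamma$-periodic kernel on $B_{H}$ and matrix coefficients of $\sigma$, so it is the image of an element of the maximal group $C^{*}$-algebra tensored with compact operators. Weak containment means the kernels of these $C^{*}$-representations are ordered, and the spectrum of the image of a fixed self-adjoint element is monotone accordingly.

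First inclusion. Since $\Gamma$ acts faithfully by isometries on $F$, the representation $\rho$ on $L^{2}(F)$ contains the regular representation weakly. Concretely, take $f$ supported in a small ball $U\subset F$ where the translates $hU$ are pairwise disjoint for $h$ in a given finite set; this works because the action is faithful and isometric, so it is free on an open dense set. Then the matrix coefficients $\ispa{\rho(h)f,f}$ approximate $\delta_{e}(h)$ on finite sets, which gives $\lambda\prec\rho$. Hence $\spec(\Delta_{B_{H}})\subset\spec(\Delta_{\rho})=\spec(\Delta_{H})$. A more hands-on alternative is Weyl sequences: for $a\in\spec(\Delta_{B_{H}})$, take approximate eigenfunctions $\phi$ on $B_{H}$ with compact support. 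Put $s(y)=\sum_{\gamma}\phi(\gamma^{-1}y)\rho(\gamma)f$, where $f$ is concentrated near a point with trivial stabilizer, so that the cross terms are small. In either route, making this almost-orthogonality uniform is the main technical obstacle.

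When $\Gamma$ is infinite, $\Delta_{B_{H}}$ commutes with the infinite group $\Gamma$ acting on $L^{2}(B_{H})$. So every spectral projection $\chi_{I}(\Delta_{B_{H}})$ is $\Gamma$-invariant and nonzero, hence infinite-dimensional, since a nonzero $\Gamma$-invariant finite-rank subspace of $L^{2}(B_{H})$ cannot exist for infinite $\Gamma$ acting properly. This gives $\spec=\spec_{\mr{ess}}$ for $\Delta_{B_{H}}$. For the inclusion into $\spec_{\mr{ess}}(\Delta_{H})$, I would use the Weyl-sequence construction above with $\phi$ translated by elements $\gamma_{k}\to\infty$ of $\Gamma$. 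Alternatively, I would choose $f_{k}$ supported in disjoint balls of $F$; this is possible because $\Gamma$ infinite gives points with arbitrarily large orbits, so infinitely many disjoint such balls exist. The resulting sections are asymptotically orthogonal, so $a$ has infinite multiplicity in the sense of Weyl's criterion.

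If $\Gamma$ is amenable, every unitary representation is weakly contained in $\lambda$ (Hulanicki–Reiter), so $\rho\prec\lambda$. By monotonicity, $\spec(\Delta_{H})=\spec(\Delta_{\rho})\subset\spec(\Delta_{B_{H}})$, which gives equality. Combining this with the previous paragraph in the infinite amenable case yields
$\spec(\Delta_{H})=\spec(\Delta_{B_{H}})=\spec_{\mr{ess}}(\Delta_{B_{H}})\subset\spec_{\mr{ess}}(\Delta_{H})\subset\spec(\Delta_{H})$, so all of these sets coincide.
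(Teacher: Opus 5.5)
Your arguments for three parts are fine:
\begin{itemize}
\item $\spec(\Delta_{B_{H}})\subset\spec(\Delta_{H})$: the weak containment $\lambda_{\Gamma}\prec\rho$ is witnessed exactly by the small-ball functions of Lemma~\ref{IP2}, and Sunada's monotonicity then applies.
\item $\spec(\Delta_{B_{H}})=\spec_{\mr{ess}}(\Delta_{B_{H}})$: a nonzero finite-dimensional $\Gamma$-invariant subspace with orthonormal basis $u_{1},\dots,u_{m}$ would make $\sum_{i}|u_{i}|^{2}$ an integrable $\Gamma$-invariant function on $B_{H}$, which is impossible for infinite $\Gamma$. This is a nice elementary substitute for Polymerakis.
\item The amenable case: this is exactly the Sunada corollary the paper invokes.
\end{itemize}

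The gap is the inclusion $\spec_{\mr{ess}}(\Delta_{B_{H}})\subset\spec_{\mr{ess}}(\Delta_{H})$, which is the real content of the theorem. Neither of your two routes works as stated.

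\emph{Translating $\phi$.} This buys nothing after periodification. Directly from the definition, $T(\gamma\phi,f)=T(\phi,\rho(\gamma)^{-1}f)$, so the translation is absorbed into a rotation of $f$. Now $\rho(\Gamma)$ lies in the image of the compact group $\mr{Isom}(F)$, which acts strongly continuously on $L^{2}(F)$, and $f\mapsto T(\phi,f)$ is bounded. Hence the whole family $\{T(\gamma\phi,f)\}_{\gamma\in\Gamma}$ is precompact in $L^{2}(\mc{E}_{\rho})$, and no orthogonality is produced.

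\emph{Disjoint balls.} By Lemma~\ref{IP1}, $\ispa{T(\psi_{k},f_{k}),T(\psi_{l},f_{l})}=\sum_{h\in P_{k,l}}\ispa{\rho(h)f_{k},f_{l}}\ispa{h\psi_{k},\psi_{l}}$. Disjointness of $\supp f_{k}$ and $\supp f_{l}$ kills only the term $h=e$, which orthonormality of the $\psi_{k}$ already kills. For $h\neq e$ you would need $hU_{k}\cap U_{l}=\emptyset$ for all $h\in P_{k,l}$ and $l<k$. But $|P_{k,l}|$ is in general unbounded in $k$. When the $\Gamma$-orbits are dense in $F$ (e.g.\ an irrational rotation of a circle fiber), the finitely many translates $P_{k,l}^{-1}\ol{U_{l}}$ can cover $F$, leaving no room for $U_{k}$. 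You give no estimate making these terms small, and the infinitude of $\Gamma$ (large orbits) is irrelevant to them.

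This is precisely what Lemma~\ref{TNe} is built to handle. There $f_{k}$ has mean zero, and the connected sets $g\ol{X_{k}}$, $g\in P(k)$, are only required to avoid the nowhere dense boundaries $\partial X_{j}^{\pm}$, $j<k$. Then every earlier $f_{i}$ is constant on them, and this finite avoidance condition can always be met.

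Besides adopting that construction, there is a shorter repair if you aim for a weakly null, rather than orthonormal, Weyl sequence.
\begin{itemize}
\item Keep the choice made in Proposition~\ref{CCP}, so that $\|T(\psi_{k},f_{k})\|=1$ and $\|(\Delta_{\rho}-\lambda)T(\psi_{k},f_{k})\|=\|(\Delta_{B_{H}}-\lambda)\psi_{k}\|\to 0$.
\item Shrink the support $U_{k}$ of the normalized $f_{k}$ until $\mr{vol}(U_{k})^{1/2}\|\psi_{k}\|_{L^{1}(B_{H})}\to 0$.
\item For $v\in C^{\infty}_{\mr{s}}(\mc{E}_{\rho})$, equivariance and unfolding over a fundamental domain give $\ispa{T(\psi_{k},f_{k}),v}=\int_{B_{H}}\psi_{k}(y)\ispa{f_{k},v(y)}_{L^{2}(F)}\,d\mu_{B_{H}}(y)$.
\item This is bounded in absolute value by $\sup|v|\cdot\mr{vol}(U_{k})^{1/2}\|\psi_{k}\|_{L^{1}}$. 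Here $\sup|v|<\infty$ because $|v(y)(p)|$ is invariant under the diagonal action and so descends to the compact manifold $M$.
\item By density of $C^{\infty}_{\mr{s}}(\mc{E}_{\rho})$ (Lemma~\ref{UEL3}), the sequence is weakly null, hence a singular Weyl sequence.
\end{itemize}
This even gives $\spec(\Delta_{B_{H}})\subset\spec_{\mr{ess}}(\Delta_{H})$ without using that $\Gamma$ is infinite.

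Alternatively, staying inside your weak-containment framework, project the small-ball functions off the finitely many eigenspaces $E(\mu)$ with $\mu\le m$. This gives $\lambda_{\Gamma}\prec\rho|_{\bigoplus_{\mu>m}E(\mu)}$ for every $m$. Monotonicity then puts $\lambda$ in the spectrum of $\Delta_{\rho}$ restricted to $\bigoplus_{\mu>m}L^{2}(\mc{E}_{\rho_{\mu}})$ for every $m$. So the spectral projection of $\Delta_{\rho}$ near $\lambda$ is nonzero on infinitely many of the orthogonal summands, and therefore has infinite rank.
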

The fact that $\Delta_{B_{H}}$ does not have eigenvalues with finite multiplicities if 
$\mr{Hol}(b_{o})$ is infinite follows from a theorem due to Polymerakis \cite{Pol}. 
Note that, even if $\mr{Hol}(b_{o})$ is amenable and infinite, $\Delta_{H}$, and hence $\Delta_{\rho}$, 
can have eigenvalues with finite multiplicities. According to Theorem $\ref{BOT}$, 
such eigenvalues are not isolated in the spectrum. 
\par
Let $L(p)$ denote the leaf of the integrable horizontal distribution through $p \in F$. 
Consider the metric on $L(p)$ induced by the metric $g$ on $M$, 
and denote $\Delta_{L(p)}$ the Laplacian on $L(p)$. Since $L(p)$ is complete 
(see, for example, \cite{ON2}, \cite{KN}), $L(p)$ is a Riemannian covering over $B$, and $\Delta_{p}$ 
is essentially self-adjoint on $C^{\infty}_{0}(L(p))$. Furthermore, if the stabilizer of $p$ in $\mr{Hol}(b_{o})$ 
is trivial, then the Riemannian covering $\pi_{L(p)} \colon L(p) \to B$ is isomorphic to 
$\varpi_{H} \colon B_{H} \to B$. Hence we have the following. 
\begin{cor}\label{COR2}
In addition to the assumptions mentioned above, 
we further assume that $\mr{Hol}(b_{o})$ is amenable. 
If the stabilizer of $p \in F$ in $\mr{Hol}(b_{o})$ is trivial, then 
we have $\spec(\Delta_{L(p)})=\spec(\Delta_{H})$. 
If furthermore $\mr{Hol}(b_{o})$ is infinite, then we have 
$\spec_{\mr{ess}}(\Delta_{L(p)})=\spec_{\mr{ess}}(\Delta_{H})$. 
\end{cor}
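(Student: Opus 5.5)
The plan is to reduce Corollary \ref{COR2} to Theorem \ref{BOT} by identifying $\Delta_{L(p)}$ with $\Delta_{B_{H}}$ up to unitary equivalence.

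\textbf{Step 1: the leaf is a Riemannian covering.} The leaf $L(p)$ is complete and horizontal. Since $\pi$ is a Riemannian submersion, $\pi|_{L(p)}\colon L(p)\to B$ is a local isometry. A complete local isometry onto a connected manifold is a Riemannian covering.

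\textbf{Step 2: identify this covering with $\varpi_{H}$.} I will use the description of $M$ from Section \ref{FL} as $B_{H}\times_{\mr{Hol}(b_{o})} F$. Under it, the leaves of $\mc{H}$ are the images of the slices $B_{H}\times\{q\}$. Hence $L(p)$ is the image of $B_{H}\times\{p\}$ under the quotient map. Two points $(\tilde b,p)$ and $(\tilde b h^{-1}, hp)$ of this slice are identified exactly when $h$ lies in the stabilizer of $p$.

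If the stabilizer is trivial, the map $B_{H}\to L(p)$, $\tilde b\mapsto[\tilde b,p]$, is injective. It commutes with the projections to $B$, and both spaces carry pulled-back metrics, so it is an isometry of Riemannian coverings. Equivalently, the monodromy of the covering $L(p)\to B$ at $p$ is the holonomy action on the orbit of $p$. Triviality of the stabilizer means the characteristic subgroup of this covering is $\ker\mf{h}$, which is the characteristic subgroup of $B_{H}=\wh{B}/\ker\mf{h}$.

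\textbf{Step 3: transfer to spectra.} An isometry $\Phi\colon B_{H}\to L(p)$ gives a unitary $f\mapsto f\circ\Phi$ between $L^{2}(L(p))$ and $L^{2}(B_{H})$. It maps $C^{\infty}_{0}$ onto $C^{\infty}_{0}$ and intertwines the Laplacians. Both Laplacians are essentially self-adjoint by completeness, so the closures are unitarily equivalent. Therefore $\spec(\Delta_{L(p)})=\spec(\Delta_{B_{H}})$ and $\spec_{\mr{ess}}(\Delta_{L(p)})=\spec_{\mr{ess}}(\Delta_{B_{H}})$.

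\textbf{Step 4: apply Theorem \ref{BOT}.} If $\mr{Hol}(b_{o})$ is amenable, Theorem \ref{BOT} gives $\spec(\Delta_{B_{H}})=\spec(\Delta_{H})$, which is the first claim. If it is moreover infinite, Theorem \ref{BOT} gives $\spec_{\mr{ess}}(\Delta_{B_{H}})=\spec_{\mr{ess}}(\Delta_{H})$, which is the second claim.

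\textbf{Main obstacle.} The only genuinely nontrivial step is Step 2, identifying $L(p)$ with $B_{H}$. I will settle it using the structure result of Section \ref{FL}, namely the isometric bundle isomorphism $M\cong B_{H}\times_{\mr{Hol}(b_{o})}F$. The point to check is that under this isomorphism the horizontal foliation corresponds to the flat foliation given by the slices. That holds because the horizontal lifts of curves are exactly the curves that are constant in the $F$-coordinate.
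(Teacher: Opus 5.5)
Your proposal is correct and follows essentially the paper's route: show that $\pi|_{L(p)}\colon L(p)\to B$ is a Riemannian covering isomorphic to $\varpi_{H}\colon B_{H}\to B$ when the stabilizer of $p$ is trivial, then invoke Theorem \ref{BOT}. The paper makes the identification through the characteristic subgroup $(\pi_{L})_{\#}\pi_{1}(L,p)=\mf{h}^{-1}(\mr{Hol}(b_{o})_{p})=\ker\mf{h}$, which you also note; your explicit injective map $x\mapsto\Pi(x,p)$ from the fiber-product model $B_{H}\times_{\iota_{H}}F$ does the same job and also gives completeness of $L(p)$ without O'Neill's totally geodesic argument.
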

The organization is as follows. 
In Section \ref{FL}, we give some notes on the structure of the manifold $M$ 
under the assumptions mentioned above and give proofs of Theorem $\ref{MN1}$ and Corollary $\ref{TEN}$. 
A proof of Theorem $\ref{BBC}$ is given in Section $\ref{CV}$ which uses Sunada's inequality \cite{Su1} on 
the bottom of the spectrum of the twisted Laplacian and Kazhdan's constant of the corresponding 
unitary representation of $\pi_{1}(B)$. 
Theorem $\ref{BOT}$ is proved in Section $\ref{TM2}$. 
To prove the assertion on the essential spectrum in Theorem $\ref{BOT}$, 
we construct a Weyl sequence for $\Delta_{\rho}$ from that for $\Delta_{B_{H}}$. 
Our method of construction is to compare $\Delta_{B_{H}}$ and $\Delta_{\rho}$ 
directly by using a periodification of functions in $C_{0}^{\infty}(\Delta_{B_{H}})$ and 
the fact that the action of $\mr{Hol}(b_{o})$ on $F$ is isometric. 
\par
\vspace{15pt}
\noindent{\bf Acknowledgments:} 
\par
\vspace{5pt}
The author would like to thank Professors Motoko Kotani, Shin Nayatani, and Kazumasa Narita for their stimulating discussions. 
In particular, Theorem $\ref{BBC}$ has been found by the author as an answer to Professor Nayatani's question 
to the summary of the very earlier version of the manuscript. 
\section{A flat vector bundle and a twisted Laplacian} \label{FL}
In this section we give proofs of Theorem $\ref{MN1}$ and Corollary $\ref{TEN}$. 
We first show that Riemannian submersions with totally geodesic fibers 
and integrable horizontal distributions can be written as fiber products.  
The unitary intertwiner between the horizontal Laplacian and the twisted Laplacian 
is naturally defined by using this interpretation. 
\subsection{Fiber products}\label{Fib}
Let us begin by recalling the definition of the holonomy group of the Riemannian submersion. 
Let $(M,g)$ and $(B,j)$ be complete connected Riemannian manifolds and let $\pi \colon M \to B$ 
be a Riemannian submersion,  
that is, $\pi$ is surjective and, for each $x \in M$, the restriction of $d\pi_{x}$ to the horizontal subspace $\mc{H}_{x}$ 
is an isometric isomorphism onto $T_{\pi(x)}B$. 
Let $c \colon [0,1] \to B$ be a (piecewise smooth) path. 
For each $p \in F_{c(0)}$ there exists a unique path $\ol{c_{p}} \colon [0,1] \to M$ 
such that $\ol{c_{p}}'(t) \in \mc{H}_{\ol{c_{p}}(t)}$ for each $t \in [0,1]$, $\pi \comp \ol{c_{p}}=c$ and 
$\ol{c_{p}}(0)=p$. Such a path $\ol{c_{p}}$ is called the {\it horizontal lift} of $c$ starting from the point $p$. 
The horizontal lift always exists globally by the completeness assumption of $(M,g)$ (\cite{Her1}). 
We set 
\begin{equation}\label{HK}
k(c)p:=\ol{c_{p}}(1). 
\end{equation}
This gives a map $k(c) \colon F_{c(0)} \to F_{c(1)}$. 
Assume that each of the fiber $F_{b}$ is a totally geodesic submanifold in $M$. 
As is proved in \cite{Her1}, $k(c)$ is an isometry from $F_{c(0)}$ onto $F_{c(1)}$. 
Fix a base-point $b_{o} \in B$. 
Since $B$ is assumed to be connected, each fiber of $\pi$ is isomorphic to $F=F_{b_{o}}$. 
Set 
\begin{equation}\label{HOL}
\mr{Hol}(b_{o})=\{k(c) \mid \mbox{$c$ is a closed path with the base-point $b_{o}$}\}. 
\end{equation}
Since $k(c_{1})k(c_{2})=k(c_{2}c_{1})$ with the concatenation $c_{2} c_{1}$ of $c_{2}$ followed by $c_{1}$, 
$\mr{Hol}(b_{o})$ is a subgroup in the group of isometries, $\mr{Isom}(F)$, of $F$, 
which is called the {\it holonomy group} at $b_{o}$. 
Since $B$ is assumed to be connected, 
the group structure of the holonomy group does not depend on the choice of the base-point. 
\par
Now assume further that the horizontal distribution $\mc{H}$ is integrable. 
Then, $\mc{H}$ defines a foliation on $M$. 
For $p \in M$, let $L(p)$ denote the leaf of $\mc{H}$ through $p$. 
\begin{lem}\label{Hol1}
Let $c_{0},c_{1}$ be paths in $B$ with $c_{0}(0)=c_{1}(0)=b_{o}$ and $c_{0}(1)=c_{1}(1)=b$. 
Suppose that $c_{0}$ and $c_{1}$ are homotopic relative to $\{0,1\}$. Then $k(c_{0})=k(c_{1})$. 
\end{lem}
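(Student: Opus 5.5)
We prove that parallel transport $k(c)p$ depends only on the homotopy class of $c$ relative to $\{0,1\}$, using integrability of $\mc{H}$. Let $H \colon [0,1]\times[0,1] \to B$ be a homotopy with $H(s,0)=b_{o}$, $H(s,1)=b$, $H(0,\cdot)=c_{0}$, $H(1,\cdot)=c_{1}$. We may take $H$ piecewise smooth, since homotopic piecewise smooth paths are smoothly homotopic rel endpoints. Fix $p\in F_{b_{o}}$. The plan is to lift the whole homotopy horizontally, using that $\pi|_{L(p)}\colon L(p)\to B$ is a local isometry. Indeed, $T_{x}L(p)=\mc{H}_{x}$ and $d\pi_{x}|_{\mc{H}_{x}}$ is an isometric isomorphism, so $\pi|_{L(p)}$ is a local isometry when $L(p)$ carries the induced metric.

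First, we show that horizontal lifts stay in leaves. For each $s$, the horizontal lift $\ol{(c_{s})_{p}}$ of $c_{s}=H(s,\cdot)$ starting at $p$ is a curve tangent to $\mc{H}$. A curve tangent to an integrable distribution stays inside a single leaf (Frobenius), so $\ol{(c_{s})_{p}}$ lies in $L(p)$. Moreover, since $\pi|_{L(p)}$ is a local diffeomorphism, $\ol{(c_{s})_{p}}$ is exactly the lift of $c_{s}$ to $L(p)$ through the local diffeomorphism $\pi|_{L(p)}$ starting at $p$. Second, we show that $\pi|_{L(p)}\colon L(p)\to B$ is a covering map. The leaf $L(p)$ with the induced metric is complete: a geodesic of $L(p)$ is horizontal and projects to a geodesic of $B$. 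Since $B$ is complete, horizontal lifts of geodesics of $B$ exist for all time (\cite{Her1}), and they remain in $L(p)$ by the first step. A local isometry from a complete connected Riemannian manifold onto a connected Riemannian manifold is a covering map (\cite{KN}), so $\pi|_{L(p)}$ is a Riemannian covering.

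Third, we apply the homotopy lifting property of the covering $\pi|_{L(p)}$. The homotopy $H$ lifts uniquely to $\wt{H}\colon[0,1]^{2}\to L(p)$ with $\wt{H}(s,0)=p$. By uniqueness of path lifts, each $\wt{H}(s,\cdot)$ equals the horizontal lift $\ol{(c_{s})_{p}}$. The endpoint curve $s\mapsto \wt{H}(s,1)$ lies in the discrete fiber $\pi|_{L(p)}^{-1}(b)$, so it is constant. Hence
\[
k(c_{0})p=\wt{H}(0,1)=\wt{H}(1,1)=k(c_{1})p .
\]
Since $p\in F_{b_{o}}$ was arbitrary, $k(c_{0})=k(c_{1})$.

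The main obstacle is the second step: verifying completeness of the leaf, so that $\pi|_{L(p)}$ is a genuine covering and not merely a local diffeomorphism. Completeness is what guarantees that the lift of the homotopy exists. Without it, one could instead use an elementary argument: cover $[0,1]^{2}$ by small squares whose images lie in evenly covered neighborhoods. For such a neighborhood $U$, the set $\pi^{-1}(U)$ is foliated by horizontal plaques, each mapped diffeomorphically onto $U$. Within one small square, the lifts of the two edge paths therefore end at the same point, since both lie in the same plaque. Composing these local invariances then gives the result.
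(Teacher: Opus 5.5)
Your proof is correct, but it takes a different route from the paper's. The paper argues locally and differentially. For a piecewise smooth homotopy $\gamma_{s}$, the endpoint curve $\ell(s)=k(\gamma_{s})p$ is piecewise smooth in $s$, by smooth dependence of the horizontal-lift ODE on the parameter. It lies in the fiber $F_{b}$, so $\ell'(s)\in\mathcal{V}_{\ell(s)}$, and it lies in the leaf $L(p)$, so $\ell'(s)\in\mathcal{H}_{\ell(s)}$; hence $\ell'\equiv 0$. The paper needs neither completeness of leaves nor covering theory, only the implicitly used fact that a smooth curve in $M$ contained in a leaf is tangent to $\mathcal{H}$. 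For a possibly dense leaf, that fact relies on leaves being weakly embedded.

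You instead first prove that $\pi|_{L(p)}\colon L(p)\to B$ is a Riemannian covering and then use homotopy lifting. Discreteness of the fiber $\pi|_{L(p)}^{-1}(b)$ plays exactly the role of $\mathcal{H}\cap\mathcal{V}=\{0\}$. This costs more setup, but once the covering is in hand your argument is purely topological, which buys three things:
\begin{enumerate}
\item It works for continuous homotopies. You only need to identify $\widetilde{H}(s,\cdot)$ with the horizontal lift at $s=0,1$, so smoothing $H$ is superfluous.
\item You only need the easier fact that horizontal curves are continuous into $L(p)$ with its leaf topology.
\item You establish along the way the covering property of leaves, which the paper itself invokes later in the proof of Corollary~\ref{COR2}. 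There the paper obtains completeness from the leaves being totally geodesic rather than from horizontal lifts of geodesics.
\end{enumerate}

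The fallback ``elementary argument'' in your last paragraph is unnecessary here, since $M$ is compact. As sketched, it would also need more care: producing plaques in $\pi^{-1}(U)$ that each map diffeomorphically onto a fixed $U$ is already essentially a local version of the lemma.
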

\begin{proof}
Let $\gamma_{s}$ ($s \in [0,1]$) be a piecewise smooth homotopy 
such that $\gamma_{s}(0)=b_{o}$, $\gamma_{s}(1)=b$, $\gamma_{0}=c_{0}$, $\gamma_{1}=c_{1}$. 
Take $p \in F$ and consider the horizontal lift $\ol{(\gamma_{s})_{p}}$ of $\gamma_{s}$ starting at $p \in F$. 
Since $\ol{(\gamma_{s})_{p}}(t)$ is a solution to a differential equation in $t$ with 
coefficients depending piecewise smoothly in $s$, 
the curve $\ell(s):=k(\gamma_{s})p=\ol{(\gamma_{s})_{p}}(1)$ in $F_{b}$ is piecewise smooth in $s$. 
Thus $\ell'(s) \in \mc{V}_{\ell(s)}$. Since $\ol{(\gamma_{s})_{p}}(t)$ is a horizontal lift, 
$\ol{(\gamma_{s})_{p}}(t) \in L(p)$ for any $t$ and $s$, and hence $\ell(s) \in L(p)$.  
Thus we have $\ell'(s) \in \mc{H}_{\ell(s)} \cap \mc{V}_{\ell(s)}=\{0\}$. 
Hence $\ell(s)$ is a constant path. Therefore, $k(c_{0})p=\ell(0)=\ell(1)=k(c_{1})p$, which shows the lemma.  
\end{proof}
From Lemma $\ref{Hol1}$, we have a surjective homomorphism
\begin{equation}\label{HOM}
\mf{h} \colon \pi_{1}(B,b_{o}) \to \mr{Isom}(F),\quad 
\mf{h}(\ispa{c}) =k(c)^{-1} \quad (\ispa{c} \in \pi_{1}(B,b_{o})), 
\end{equation}
where $\ispa{c}$ denotes the homotopy class of a closed path $c$. 
Let $\wh{p} \colon \wh{B} \to B$ denote the Riemannian universal covering of $B$, 
and identify $\pi_{1}(B,b_{o})$ with the group of covering transformations of $\wh{p}$. 
Thus, $\pi_{1}(B,b_{o})$ acts on the Riemannian product manifold $\wh{B} \times F$ as 
\[
\sigma (\alpha,p)=(\sigma \alpha, \mf{h}(\sigma)p) \quad ((\alpha,p) \in \wh{B} \times F,\ \sigma \in \pi_{1}(B,b_{o})). 
\]
Sine the action of $\pi_{1}(B,b_{o})$ on $\wh{B} \times F$ is properly discontinuous, 
the quotient manifold $\wh{B} \times_{\mf{h}} F=(\wh{B} \times F)/\pi_{1}(B,b_{o})$ has a unique 
Riemannian metric $G$ such that the natural map 
$\Pi_{\mf{h}} \colon \wh{B} \times F \to \wh{B} \times_{\mf{h}} F$ is a Riemannian covering. 
\par
To obtain a fiber product representation of $M$, 
we use the construction of the universal covering space, $\wh{B}$, of $B$ 
by the quotient space of the space of paths starting at $b_{o}$ (\cite{M}). 
Define 
\[
\mc{B}=\{c \colon [0,1] \to B : \mbox{$c$ is a path such that $c(0)=b_{o}$} \}. 
\]
For $c_{0},c_{1} \in \mc{B}$, 
write $c_{0} \sim c_{1}$ if $c_{0}$ and $c_{1}$ are homotopic with the endpoints fixed. 
We consider the quotient set $\mc{B}/{\sim}$. 
We write $\ispa{c}$ for an element in $\mc{B}/{\sim}$ determined by $c \in \mc{B}$. 
Define the map $\ol{p} \colon \mc{B}/{\sim} \to B$ by
\[
\ol{p}(\ispa{c}) =c(1) \quad (\ispa{c} \in \mc{B}/{\sim}). 
\]
Let $U$ be a connected open set in $B$ with the property that 
the inclusion $\iota \colon U \hookrightarrow B$ induces 
the trivial homomorphism from $\pi_{1}(U)$ to $\pi_{1}(B)$. 
Let $c \in \mc{B}$ be a path with $c(1) \in U$, and set 
\[
\ispa{c, U}=\{\ispa{c  c_{1}} \in \wh{B} \mid c_{1} \colon [0,1] \to U, \, c_{1}(0)=c(1)\}. 
\]
We equip $\mc{B}/{\sim}$ the topology generated by the family $\{\ispa{U,c}\}$, 
where $U$ runs over all connected open sets in $B$ with the property 
explained above and $c \in \mc{B}$ with $c(1) \in U$. 
It is well-known that the topological space $\mc{B}/{\sim}$ with the map 
$\ol{p} \colon \mc{B}/{\sim} \to B$ defined in this way is a universal covering space of $B$ (\cite{M}). 
Hence, in the rest of the paper, we identify $\wh{p} \colon \wh{B} \to B$ with $\ol{p} \colon \mc{B}/{\sim} \to B$. 
We denote $\alpha \in \wh{B}$ as $\alpha=\ispa{c_{\alpha}}$ with a 
piecewise smooth path $c_{\alpha} \in \mc{B}$. 
The action of $\pi_{1}(B,b_{o})$ on $\wh{B}$ in this formulation is given by
\[
\sigma \alpha=\ispa{\gamma_{\sigma} c_{\alpha}},\quad \sigma=\ispa{\gamma_{\sigma}} \in \pi_{1}(B,b_{o}), \ 
\alpha=\ispa{c_{\alpha}} \in \wh{B}.
\]
\begin{prop}\label{ST1}
Under the assumption made above, 
the map $\varphi_{0} \colon \wh{B} \times F \to M$ defined by 
\[
\varphi_{0} (\alpha,p) = k(c_{\alpha}) p \quad (\alpha=\ispa{c_{\alpha}} \in \wh{B},\ p \in F)
\]
descends to an isometric diffeomorphism $\varphi \colon \wh{B} \times_{\mf{h}} F \to M$  
which makes the following diagram commutative; 
\begin{equation}\label{DDD1}
\xymatrix{
\wh{B} \times_{\mf{h}} F \ar[rd]_{\pi_{\mf{h}}} \ar[rr]^-{\varphi}  & & M \ar[ld]^{\pi} \\
& B & 
}
\end{equation}
where $\pi_{\mf{h}}$ is defined as $\pi_{\mf{h}} (\Pi_{\mf{h}}(\alpha,p))=\wh{p}(\alpha)$. 
\end{prop}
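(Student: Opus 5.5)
The proof splits into four parts: $\varphi_{0}$ is well defined and invariant under the action of $\pi_{1}(B,b_{o})$; the induced map $\varphi$ is bijective; $\varphi$ is a local diffeomorphism; and $\varphi$ is a local isometry.

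\textbf{Well-definedness and invariance.} First, $\varphi_{0}$ does not depend on the representative: if $c_{\alpha}\sim c_{\alpha}'$ rel endpoints, then $k(c_{\alpha})=k(c_{\alpha}')$ by Lemma \ref{Hol1}. Next, let $\sigma=\ispa{\gamma_{\sigma}}$. Using $k(c_{1})k(c_{2})=k(c_{2}c_{1})$, we get
\[
\varphi_{0}(\sigma(\alpha,p))=k(\gamma_{\sigma}c_{\alpha})\,\mf{h}(\sigma)p=k(c_{\alpha})k(\gamma_{\sigma})k(\gamma_{\sigma})^{-1}p=\varphi_{0}(\alpha,p).
\]
So $\varphi_{0}$ descends to $\varphi$. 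Commutativity of (\ref{DDD1}) is immediate, since $k(c_{\alpha})p\in F_{c_{\alpha}(1)}=F_{\wh{p}(\alpha)}$.

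\textbf{Bijectivity.} For surjectivity, take $x\in M$ and a path $c$ from $b_{o}$ to $\pi(x)$. Lifting $c^{-1}$ horizontally from $x$ gives $p\in F$ with $k(c)p=x$, because $k(c^{-1})=k(c)^{-1}$. For injectivity, suppose $k(c_{\alpha})p=k(c_{\beta})q$. Then $\wh{p}(\alpha)=\wh{p}(\beta)$, so $\beta=\sigma\alpha$ for some deck transformation $\sigma$. By the invariance above, $k(c_{\beta})\mf{h}(\sigma)p=k(c_{\alpha})p=k(c_{\beta})q$. Since $k(c_{\beta})$ is bijective, $q=\mf{h}(\sigma)p$, i.e.\ $(\beta,q)=\sigma(\alpha,p)$.

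\textbf{Local diffeomorphism and local isometry.} It suffices to show that $\varphi_{0}$ is a local isometry from the product $\wh{B}\times F$ to $M$; since $\Pi_{\mf{h}}$ is a Riemannian covering, $\varphi$ is then a bijective local isometry, hence an isometric diffeomorphism. Fix $(\alpha,p)$ and a neighborhood $\ispa{c_{\alpha},U}$ with $U$ a small geodesically convex ball around $b=\wh{p}(\alpha)$. For $\beta$ in this neighborhood, write $c_{\beta}=c_{\alpha}c_{1}$ with $c_{1}$ the radial geodesic in $U$. Then
\[
\varphi_{0}(\beta,q)=k(c_{1})\,k(c_{\alpha})q .
\]
The map $q\mapsto k(c_{\alpha})q$ is an isometry $F\to F_{b}$ by Hermann's result.

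\textbf{Main obstacle: the local structure of $M$ over $U$.} Define
\[
\psi\colon U\times F_{b}\to\pi^{-1}(U),\qquad \psi(b',y)=k(c_{b'})y ,
\]
where $c_{b'}$ is the radial geodesic from $b$ to $b'$. I plan to show that $\psi$ is an isometry with respect to the product metric, in three steps.
\begin{enumerate}
\item By integrability and Lemma \ref{Hol1} (applied in the simply connected $U$), $\psi(U\times\{y\})$ is the plaque of the leaf $L(y)$ through $y$. Because $\pi$ is a Riemannian submersion and the plaque is horizontal, $\pi$ restricted to it is an isometry onto $U$. Hence $d\psi$ restricted to $TU$ is an isometry onto $\mc{H}$.
\item For fixed $b'$, the map $y\mapsto k(c_{b'})y$ is an isometry $F_{b}\to F_{b'}$ (Hermann), so $d\psi$ restricted to $TF_{b}$ is an isometry onto $\mc{V}$.
\item $\mc{H}\perp\mc{V}$, so these two pieces combine to an isometry of the product metric.
\end{enumerate}
Smoothness of $\psi$ follows from smooth dependence of ODE solutions on initial data and parameters. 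The delicate point is step 1: it requires horizontal lifts to stay in a leaf and the leaf plaques to be graphs over $U$. Lemma \ref{Hol1} handles this: any two paths in $U$ with the same endpoints are homotopic in $B$, so they have the same lift map $k$.

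Composing the identification of $\ispa{c_{\alpha},U}$ with $U$ and $\psi$ (together with the isometry $k(c_{\alpha})$ on the fiber factor) shows that $\varphi_{0}$ is a local isometry, which completes the proof.
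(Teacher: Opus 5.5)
Your proposal is correct and is essentially the paper's proof. Invariance comes from $k(c_{1})k(c_{2})=k(c_{2}c_{1})$. Injectivity comes from producing the deck transformation: you use the orbit structure of $\wh{p}$, the paper uses the loop $c_{\alpha}c_{\beta}^{-1}$. Surjectivity comes from lifting $c^{-1}$. The isometry comes from the same three facts: horizontality of the base derivative (the content of Lemma \ref{ST0}, obtained from Lemma \ref{Hol1} plus concatenation), Hermann's fiber isometries, and $\mc{H}\perp\mc{V}$. Packaging these into the local product chart $\psi$ over a convex ball is only a reorganization, though your use of radial geodesics makes the smoothness of $\varphi_{0}$ more transparent than the paper's brief ODE remark.
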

Clearly $\pi_{\mf{h}}$ is well-defined, and Proposition $\ref{FPL}$ given below shows that 
$\pi_{\mf{h}}$ defines a Riemannian submersion. Thus, Proposition $\ref{ST1}$ 
shows that, as a Riemannian submersion over $B$, we can identify $\wh{B} \times_{\mf{h}} F$ and $M$ 
as a Riemannian submersion. To prove Proposition $\ref{ST1}$, we need the following lemma. 
\begin{lem}\label{ST0}
Fix $p \in F$ and consider the map $f \colon \wh{B} \to M$ defined by $f(\alpha)=k(c_{\alpha})p$. 
Take $\alpha=\ispa{c_{\alpha}} \in \wh{B}$ and $u \in T_{\alpha}\wh{B}$. Set $x=f(\alpha)$, $b=\wh{p}(\alpha)$. 
Then the tangent vector $df_{\alpha}(u) \in T_{x}M$ is the 
horizontal lift of $d\wh{p}_{\alpha}(u) \in T_{b}B$ at $x$. 
\end{lem}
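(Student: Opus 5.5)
The plan is to realize $u$ as the velocity of a curve in $\wh{B}$ built from the path-space description of $\wh{B}$. Concretely, $f\comp\gamma$ will be a horizontal lift by construction, which forces its velocity to be horizontal.

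First, choose a connected open set $U\ni b$ as in the definition of the topology on $\mc{B}/{\sim}$, small enough that $\wh{p}$ restricted to $\ispa{c_{\alpha},U}$ is a diffeomorphism onto $U$. Take a smooth curve $\beta\colon(-\varepsilon,\varepsilon)\to U$ with $\beta(0)=b$ and $\beta'(0)=d\wh{p}_{\alpha}(u)$. For each $s$, let $\beta_{s}\colon[0,1]\to U$ be the path $\beta_{s}(t)=\beta(ts)$, which runs from $b$ to $\beta(s)$. Set $\gamma(s)=\ispa{c_{\alpha}\beta_{s}}\in\ispa{c_{\alpha},U}$. Then $\gamma$ is the lift of $\beta$ through $\alpha$, so it is a smooth curve in $\wh{B}$ with $\gamma(0)=\alpha$. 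Since $\wh{p}$ is a local diffeomorphism, $\gamma'(0)=u$.

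Next, compute $f(\gamma(s))=k(c_{\alpha}\beta_{s})p$. By the definition \eqref{HK} of $k$ via horizontal lifts and the concatenation rule, $k(c_{\alpha}\beta_{s})p=k(\beta_{s})\bigl(k(c_{\alpha})p\bigr)=k(\beta_{s})x$. The horizontal lift of a concatenated path is the concatenation of the lifts. Moreover, $k(\beta_{s})x$ is the endpoint of the horizontal lift of $t\mapsto\beta(ts)$ starting at $x$. Reparametrizing, this equals $\ol{\beta_{x}}(s)$, where $\ol{\beta_{x}}$ is the horizontal lift of $\beta$ itself starting at $x$; this uses uniqueness of horizontal lifts and their invariance under reparametrization. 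Hence $f\comp\gamma=\ol{\beta_{x}}$. Well-definedness of $f$, i.e. independence of the representative $c_{\alpha}$, is guaranteed by Lemma \ref{Hol1}.

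Finally, differentiate at $s=0$: $df_{\alpha}(u)=(f\comp\gamma)'(0)=\ol{\beta_{x}}'(0)$. This vector is horizontal, and $d\pi_{x}$ maps it to $\beta'(0)=d\wh{p}_{\alpha}(u)$. Since $d\pi_{x}|_{\mc{H}_{x}}$ is an isomorphism, $df_{\alpha}(u)$ is the horizontal lift of $d\wh{p}_{\alpha}(u)$ at $x$.

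The main obstacle is rigor in the second step. One must check that $f$ is differentiable, so that $df_{\alpha}(u)$ may be computed along the particular curve $\gamma$. Differentiability follows because, in the chart $\ispa{c_{\alpha},U}\cong U$, $f$ corresponds to $b'\mapsto k(\beta_{b'})x$, where $\beta_{b'}$ is, say, a geodesic in a convex $U$ from $b$ to $b'$. Such a map depends smoothly on $b'$ by smooth dependence of ODE solutions on parameters. Once smoothness is known, the curve computation gives the claim.
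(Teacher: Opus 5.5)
Your proposal is correct and takes essentially the same route as the paper. Both arguments realize $u$ as the velocity at $s=0$ of $s\mapsto\ispa{c_{\alpha}\beta_{s}}$, where $\beta_{s}$ is the initial segment of a base curve through $b$ with velocity $d\wh{p}_{\alpha}(u)$; both observe that $f$ along this curve is the horizontal lift of that base curve starting at $x$, and then differentiate at $s=0$.

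The differences are minor. Your reparametrization $t\mapsto\beta(ts)$ replaces the paper's $(1+s)$-rescaled concatenation, which is cosmetic. You also explicitly check the chart $\ispa{c_{\alpha},U}$ and the smoothness of $f$, which the paper leaves implicit in the lemma and only addresses afterwards, for $\varphi_{0}$, in the proof of Proposition \ref{ST1}.
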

\begin{proof}
We set $v = d\wh{p}_{\alpha}(u) \in T_{b}B$. 
Since $d\pi_{x}(df_{\alpha}(u))=v$, it is enough to show that $df_{\alpha}(u)$ is a horizontal vector. 
We take a path $c \colon [0,1] \to B$ such that $c(0)=b$ and $c'(0)=v$. 
We define, for each $s \in [0,1]$, the path $\gamma_{s} \colon [0,1] \to B$ by 
\[
\gamma_{s}(t)=
\begin{cases}
c_{\alpha}((1+s)t) & (0 \leq t \leq 1/(1+s)), \\
c((1+s)t-1) & (1/(1+s) \leq t \leq 1). 
\end{cases}
\]
Set $\wh{\gamma}(s):=\ispa{\gamma_{s}}$. 
Then, $\wh{\gamma}$ satisfies $\wh{\gamma}(0)=\alpha$ and $\wh{p}(\wh{\gamma}(s))=c(s)$. 
Therefore, $d\wh{p}_{\alpha}(\wh{\gamma}'(0))=c'(0)=v$. 
Since $\wh{p}$ is a local isometry, we have $\wh{\gamma}'(0)=u$. 
Let $\ol{(c_{\alpha})}_{p}$, $\ol{c_{x}}$ denote the horizontal lift of $c_{\alpha}$ starting at $p$ and that of 
$c$ starting at $x$, respectively. 
Then the horizontal lift, $\ol{(\gamma_{s})_{p}}$, of $\gamma_{s}$ on $M$ starting at $p$ is 
\[
\ol{(\gamma_{s})_{p}} (t)=
\begin{cases}
\ol{(c_{\alpha})}_{p} ((1+s)t) & (0 \leq t \leq 1/(1+s)), \\
\ol{c_{x}}((1+s)t-1) & (1/(1+s) \leq t \leq 1).
\end{cases}
\]
Thus we have $f(\wh{\gamma}(s))=\ol{(\gamma_{s})_{p}}(1)=\ol{c_{x}}(s)$.  
Differentiate this at $s=0$, we see that $df_{\alpha}(u)=\ol{c_{x}}'(0)$ is the horizontal lift of $v$. 
This shows the lemma. 
\end{proof}
\par
\vspace{10pt}
\noindent{\bf Proof of Proposition \ref{ST1}: } 
First of all, note that the map $\varphi_{0}$ is smooth because $k(c_{\alpha})p=\ol{(c_{\alpha})_{p}}(1)$, 
where $\ol{(c_{\alpha})_{p}}$ is the horizontal lift of $c_{\alpha}$ starting at $p$, is defined as a 
solution to ODE whose coefficients depends smoothly on the endpoints of $c_{\alpha}(1)$, and 
the solution depends smoothly on the initial condition $p$. 
Let $(\alpha,p) \in \wh{B} \times F$ and let $\sigma=\ispa{\gamma_{\sigma}} \in \pi_{1}(B,b_{o})$. Then 
\[
\varphi_{0}(\sigma \alpha, \mf{h}(\sigma) p) =k(\gamma_{\sigma} c_{\alpha}) \mf{h}(\sigma)p
=k(\gamma_{\sigma} c_{\alpha}) h(\gamma_{\sigma}^{-1}) p =
k(\gamma_{\sigma}^{-1} \gamma_{\sigma} c_{\alpha})p =k(c_{\alpha}) p =\varphi_{0}(\alpha,p). 
\]
Thus, $\varphi_{0}$ defines a map $\varphi \colon \wh{B} \times_{\mf{h}} F \to M$. 
Suppose $(\alpha,p), (\beta,q) \in \wh{B} \times F$ satisfies 
$\varphi(\Pi_{\mf{h}}(\alpha,p))=\varphi(\Pi_{\mf{h}}(\beta,q))$, 
that is, $k(c_{\alpha})p=k(c_{\beta})q$. Projecting this on $B$ by $\pi$, we see $c_{\alpha}(1)=c_{\beta}(1)$. 
Hence $c_{\alpha}c_{\beta}^{-1}$ is a closed path with the base-point $b_{o}$. 
Set $\sigma=\ispa{c_{\alpha}c_{\beta}^{-1}} \in \pi_{1}(B,b_{o})$. 
Then, $\sigma \beta =\alpha$ and $\mf{h}(\sigma)q=p$. 
Thus, $\Pi_{\mf{h}}(\alpha,p)=\Pi_{\mf{h}}(\beta,q)$, and hence $\varphi$ is injective. 
Next, take $x \in M$ and set $b=\pi(x)$. Take a path $c$ in $B$ from $b$ to $b_{o}$. 
Set $\alpha=\ispa{c^{-1}} \in \wh{B}$ and $p=k(c)x \in F$. 
Then, by definition, $\varphi_{0}(\alpha,p)=x$ and hence $\varphi$ is surjective. 
Finally we shall show that $d\varphi_{0}$ preserves the Riemannian structures. 
Take $(\alpha,p) \in \wh{B} \times F$ and set $x = \varphi_{0}(\alpha, p)=k(c_{\alpha})p \in M$ and $b=\wh{p}(\alpha) \in B$. 
Let $(u,v) \in T_{\alpha}\wh{B} \times T_{p}F$. We set $u'=(d\varphi_{0})_{(\alpha,p)}(u,0)$, 
$v'=(d\varphi_{0})_{(\alpha,p)}(0,v)$ so that $(d\varphi_{0})_{(\alpha,p)}(u,v)=u'+v' \in T_{x} M$. 
Since $k(c_{\alpha}) \colon F \to \pi^{-1}(b)$ is an isometric diffeomorphism, $v$ and 
$v'=[dk(c_{\alpha})]_{p}v \in T_{x}\pi^{-1}(b)$ have the same length. 
From Lemma $\ref{ST0}$, it follows that $u'=(d\varphi_{0})_{(\alpha,p)}(u,0)$ is 
the horizontal lift of $d\wh{p}_{\alpha}(u)$. 
Therefore $u$ and $u'$ has the same length. Since $u' \in \mc{H}_{x}$ and $v' \in \mc{V}_{x}$, 
these are perpendicular. Thus $d\varphi_{0}$, and hence $d\varphi$, preserves the Riemannian structures. 
This shows that $\varphi$ is an isometry from $\wh{B} \times_{\mf{h}} F$ onto $M$. 
\hfill$\blacksquare$
\par
\vspace{10pt}
In general, consider a regular Riemannian covering $\wt{p} \colon \wt{B} \to B$ with 
the covering transformation group $G(\wt{p})$. Fix $x_{o} \in \wt{p}^{-1}(b_{o})$. 
Let $\mf{p} \colon \wh{B} \to \wt{B}$ be the lift of $\wh{p}$ with $\mf{p}(e_{b_{o}})=x_{o}$.  
Since $\wt{p}$ is regular, 
the map 
\[
\mu(\sigma) \colon \wt{B} \to \wt{B},\quad 
\mu(\sigma) x=\mf{p}(\sigma \alpha) \quad (x \in \wt{B}, \ \alpha \in \wh{B},\ \mf{p}(\alpha)=x)
\]
is well-defined and defines a left action of $\pi_{1}(B,b_{o})$ on $\wt{B}$, 
and it gives a surjective homomorophism $\mu \colon \pi_{1}(B,b_{o}) \to G(\wt{p})$. 
\begin{prop}\label{FPL}
Let $(B,j)$, $(F,q)$ be compact connected Riemannian manifolds. Fix a base-point $b_{o} \in B$. 
Let $\wt{p} \colon \wt{B} \to B$ be a regular Riemannian covering and let 
$\phi \colon G(\wt{p}) \to \mr{Isom}(F)$ be a homomorphism. 
Let $\Pi_{\phi} \colon \wt{B} \times F \to \wt{B} \times_{\phi} F$ denote the natural projection. 
Equip $\wt{B} \times_{\phi} F$ the unique Riemannian metric $g$ such that 
$\Pi_{\phi}^{*}g=\wh{p}^{*}j \times q$. Then the map 
\[
\pi_{\phi} \colon \wt{B} \times_{\phi} F \to B,\quad \pi_{\phi} (\Pi_{\phi} (x, p))
=\wt{p}(x) \quad ((x, p) \in \wt{B} \times F) 
\]
is well-defined and is a Riemannian submersion. We have the following. 
\begin{enumerate}
\item[{\rm (i)}] $\pi_{\phi}$ has totally geodesic fibers; 
\item[{\rm (ii)}] The horizontal distribution of $\pi_{\phi}$ is integrable; 
\item[{\rm (iii)}] The holonomy representation 
$\mf{h} \colon \pi_{1}(B,b_{o}) \to \mr{Isom}(\pi_{\phi}^{-1}(b_{o}))$ of $\pi_{\phi}$ 
is given by $\mf{h}(\sigma)=f \comp \phi \comp \mu (\sigma) \comp f^{-1}$, where 
$f$ is the isometry from $F$ onto $\pi_{\phi}^{-1}(b_{o})$ given by 
\[
f(p)=\Pi_{\phi}(x_{o}, p)\quad (p \in F), 
\]
and $\mu \colon \pi_{1}(B,b_{o}) \to G(\wt{p})$ is the surjective homomorphism defined above. 
\end{enumerate}
\end{prop}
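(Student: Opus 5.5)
We plan to work throughout on the Riemannian cover $\Pi_{\phi}\colon \wt{B}\times F \to \wt{B}\times_{\phi}F$. The group $G(\wt{p})$ acts on $\wt{B}\times F$ by $\tau(x,p)=(\tau x,\phi(\tau)p)$. This action is free, since the action on $\wt{B}$ is free, and properly discontinuous. It is isometric for the product metric, because $\tau$ is an isometry of $\wt{p}^{*}j$ and $\phi(\tau)\in\mr{Isom}(F)$. Hence the quotient metric $g$ exists and $\Pi_{\phi}$ is a local isometry. The map $\pi_{\phi}$ is well defined because $\wt{p}(\tau x)=\wt{p}(x)$, and we have $\pi_{\phi}\comp\Pi_{\phi}=\wt{p}\comp \mathrm{pr}_{1}$.

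To show that $\pi_{\phi}$ is a Riemannian submersion, we compute at $(x,p)$. The differential of $\wt{p}\comp\mathrm{pr}_{1}$ has kernel $\{0\}\times T_{p}F$. Its orthogonal complement is $T_{x}\wt{B}\times\{0\}$, which is mapped isometrically onto $T_{\wt{p}(x)}B$ because $\wt{p}$ is a local isometry. Since $d\Pi_{\phi}$ is an isometric isomorphism, the vertical and horizontal spaces of $\pi_{\phi}$ at $\Pi_{\phi}(x,p)$ are the images of $\{0\}\times TF$ and $T\wt{B}\times\{0\}$, respectively. This gives the submersion property.

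For (i), observe that the fiber $\pi_{\phi}^{-1}(b)$ is the image under $\Pi_{\phi}$ of $\wt{p}^{-1}(b)\times F$, a disjoint union of slices $\{x\}\times F$. Each slice is totally geodesic in the Riemannian product, and total geodesy is a local property preserved by the local isometry $\Pi_{\phi}$. Hence the fibers are totally geodesic. Moreover, $\Pi_{\phi}$ restricted to $\{x\}\times F$ is injective, since $\tau x=x$ forces $\tau=1$, so it is an isometry onto the fiber; in particular the map $f$ is an isometry. For (ii), the horizontal distribution is the image of the distribution $T\wt{B}\times\{0\}$. That distribution is integrable, with leaves $\wt{B}\times\{p\}$, and $\Pi_{\phi}$ is a local diffeomorphism, so the image distribution is integrable as well.

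For (iii), the key observation is that horizontal lifts are explicit. Take a path $c$ in $B$ with $c(0)=b_{o}$ and a point $f(p)=\Pi_{\phi}(x_{o},p)$. Let $\wt{c}$ be the lift of $c$ to $\wt{B}$ starting at $x_{o}$. Then $t\mapsto\Pi_{\phi}(\wt{c}(t),p)$ is horizontal, since its tangent is the image of $(\wt{c}',0)$, and it projects onto $c$, so it is the horizontal lift of $c$. For a loop $\gamma_{\sigma}$ representing $\sigma$, the lift ends at $\wt{c}(1)$, and we expect this endpoint to equal $\mu(\sigma^{\pm1})x_{o}$. We then obtain
\[
k(\gamma_{\sigma})f(p)=\Pi_{\phi}(\mu(\sigma)^{\pm1}x_{o},p)=\Pi_{\phi}(x_{o},\phi(\mu(\sigma))^{\mp1}p)=f\bigl(\phi(\mu(\sigma))^{\mp1}p\bigr),
\]
where the middle equality comes from applying the deck transformation by $\mu(\sigma)^{\mp1}$. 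With $\mf{h}(\sigma)=k(\gamma_{\sigma})^{-1}$, this yields $\mf{h}(\sigma)=f\comp\phi(\mu(\sigma))^{\pm1}\comp f^{-1}$. The main obstacle is exactly this sign and orientation bookkeeping: identifying the endpoint $\wt{c}(1)$ with $\mu(\sigma)x_{o}$ under the conventions $\sigma\alpha=\ispa{\gamma_{\sigma}c_{\alpha}}$ and $\mf{p}(e_{b_{o}})=x_{o}$. To settle it, we plan to use that the lift of $\gamma_{\sigma}$ to $\wh{B}$ starting at $e_{b_{o}}$ ends at $\sigma e_{b_{o}}$ in the path model, and then push this down by $\mf{p}$. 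We will fix the composition conventions for concatenation so that the exponents match the stated formula.
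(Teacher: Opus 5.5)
Your proposal follows the paper's proof essentially step for step. $\Pi_{\phi}$ is a local isometry identifying the vertical and horizontal spaces with the images of $\{0\}\times TF$ and $T\wt{B}\times\{0\}$, (i) and (ii) are transported from the Riemannian product, and (iii) uses the explicit horizontal lift $t\mapsto\Pi_{\phi}(\wt{c}(t),p)$. The sign you left open is forced by the paper's conventions, not a choice: the lift of $\gamma_{\sigma}$ from $x_{o}$ ends at $\mf{p}(\ispa{\gamma_{\sigma}})=\mf{p}(\sigma e_{b_{o}})=\mu(\sigma)x_{o}$. Hence $k(\gamma_{\sigma})=f\comp\phi(\mu(\sigma))^{-1}\comp f^{-1}$ and $\mf{h}(\sigma)=k(\gamma_{\sigma})^{-1}=f\comp\phi(\mu(\sigma))\comp f^{-1}$, exactly as stated; the paper runs the same computation with $\gamma^{-1}$.
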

%
%
\begin{proof}
It is clear that $\pi_{\phi}$ is well-defined. Take $(x,p) \in \wt{B} \times F$. 
Differentiating $\pi_{\phi} \comp \Pi_{\phi}(x,p)=\wt{p}(x)$, we have 
\begin{equation}\label{DFE1}
(d\pi_{\phi})_{\Pi_{\phi}(x,p)} \left(
(d\Pi_{\phi})_{(x,p)}(w,v)
\right)=(d\wt{p})_{x}(w),\quad 
(w,v) \in T_{x}\wt{B} \times T_{p}F.
\end{equation}
From this it follows that $\mr{ker}(d\pi_{\phi})_{\Pi_{\phi}(x,p)} =(d\Pi_{\phi})_{(x,p)}(T_{p}F)$ 
and its orthogonal complement is $\mc{H}_{\Pi_{\phi}(x,p)}=(d\Pi_{\phi})_{(x,p)} (T_{x}\wt{B})$. 
Since $\wt{p}$ is a Riemannian covering, it follows from $\eqref{DFE1}$ that \break
$(d\pi_{\phi})_{\Pi_{\phi}(x,p)} \colon \mc{H}_{\Pi_{\phi}(x,p)} \to T_{b}B$ is an isometric isomorphism, where 
$b=\pi_{\phi}(x)=\wt{p}(x)$. 
Thus, $\pi_{\phi} \colon \wt{B} \times_{\phi} F \to B$ is a Riemannian submersion. 
%
%
The assertion (i) is clear from the fact that $\{x\} \times F$ is totally geodesic in $\wt{B} \times F$. 
The integrability of $\mc{H}$ in (ii) follows from the fact that 
$\Pi_{\phi} \colon \wt{B} \times F \to \wt{B} \times_{\phi} F$ 
is a local isometry and hence $\wt{B} \times_{\phi} F$ is locally isomorphic to the Riemannian product 
of $\wt{B}$ and $F$. 
Let us show (iii). 
Let $\gamma \colon [0,1] \to B$ be a smooth path from $b_{o}$ to $b$. Take $x \in \wt{p}^{-1}(b_{o})$ and $p \in F$. 
Let $\wt{\gamma} \colon [0,1] \to \wt{B}$ be the lift of $\gamma$ with $\wt{\gamma}(0)=x$. 
Then the path $\ol{\gamma} \colon [0,1] \to \wt{B} \times_{\phi} F$ defined by 
$\ol{\gamma}(t)=\Pi_{\phi}(\wt{\gamma}(t), p)$ 
is the horizontal lift of $\gamma$ starting at $\Pi_{\phi}(x,p)$. In particular, we have 
\[
k(\gamma)\Pi_{\phi}(x,p)=\Pi_{\phi}(\wt{\gamma}(1), p). 
\]
Take $\alpha=\ispa{c_{\alpha}} \in \wh{B}$ such that $\mf{p}(\alpha)=x$. 
Then the computation similar to that in the proof of Lemma $\ref{ST0}$ and the above formula give 
\begin{equation}\label{HGF}
k(\gamma) \Pi(x,p) =\Pi (\mf{p}(\ispa{c_{\alpha}\gamma}), p). 
\end{equation}
Now let us consider the case $b=b_{o}$ and $x=x_{o}$. In this case we can take $\alpha=e_{b_{o}}=\ispa{b_{o}^{\#}}$. 
Then $\ispa{c_{\alpha}\gamma}=\sigma$, where we set $\sigma=\ispa{\gamma} \in \pi_{1}(B,b_{o})$. 
It follows from the formula $\eqref{HGF}$ that 
\[
\begin{split}
k(\gamma^{-1}) f(p) & =k(\gamma^{-1}) \Pi(x_{o},p)=\Pi(\mf{p}(\ispa{\gamma^{-1}}), p) =\Pi(\mu(\sigma^{-1}) x_{o}, p) \\
& =\Pi(x_{o}, \phi(\mu(\sigma))p) = f(\phi(\mu(\sigma))p). 
\end{split}
\]
Since $\mf{h}(\sigma)=k(\gamma^{-1})$, we conclude the assertion in (iii). 
\end{proof}
Let us turn to our Riemannian submersion $\pi \colon M \to B$ and the 
holonomy representation $\mf{h}$ given in $\eqref{HOM}$. Define 
\begin{equation}\label{HC1}
\varpi_{H} \colon B_{H}=\wh{B}/\mr{ker}(\mf{h}) \to B,\ \ \varpi_{H}(\mr{ker}(\mf{h})\alpha)=\wh{p}(\alpha). 
\end{equation}
Equip $B_{H}$ the Riemannian metric $\varpi_{H}^{*}j$ so that $\varpi_{H}$ is a regular Riemannian covering. 
The group of covering transformations of $\varpi_{H}$ is isomorphic to 
$\pi_{1}(B,b_{o})/\mr{ker}(\mf{h}) \cong \mr{Hol}(b_{o})$. 
Denote $\iota_{H} \colon \mr{Hol}(b_{o}) \hookrightarrow \mr{Isom}(F)$ the natural inclusion. 
Then, applying Proposition $\ref{FPL}$, the manifold $B_{H} \times_{\iota_{H}} F$ with the metric induced 
from the product metric on $B_{H} \times F$ becomes a Riemannian submersion over $B$ 
with totally geodesic fibers and the integrable horizontal distribution. 
We have the following. 
\begin{lem}\label{CHL}
The map $\Phi \colon \wh{B} \times F \to B_{H} \times F$ defined by $\Phi(\alpha,p)=(p_{H}(\alpha), p)$ gives 
an isometric diffeomorphism $\varphi \colon M \cong \wh{B} \times_{\mf{h}} F \to B_{H} \times_{\iota_{H}} F$ 
such that the following diagram commutative; 
\[
\xymatrix{
\wh{B} \times_{\mf{h}} F \ar[rd]_{\pi_{\mf{h}}} \ar[rr]^-{\varphi}  & & B_{H} \times_{\iota_{H}}F \ar[ld]^{\pi_{\iota_{H}}} \\
& B & 
}
\]
\end{lem}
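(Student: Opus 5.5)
The plan is to check directly that $\Phi$ descends to the quotients, and that the descended map is a bijective local isometry commuting with the two projections to $B$. Here $p_{H}\colon \wh{B}\to B_{H}=\wh{B}/\mr{ker}(\mf{h})$ is the quotient map. It is a Riemannian covering, because $B_{H}$ carries $\varpi_{H}^{*}j$, $\wh{B}$ carries $\wh{p}^{*}j$, and $\varpi_{H}\comp p_{H}=\wh{p}$. We identify the deck group of $\varpi_{H}$ with $\pi_{1}(B,b_{o})/\mr{ker}(\mf{h})\cong\mr{Hol}(b_{o})$ via $\bar{\mf{h}}$, the isomorphism induced by $\mf{h}$. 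The action of $h=\mf{h}(\sigma)$ on $B_{H}$ is then $h\cdot p_{H}(\alpha)=p_{H}(\sigma\alpha)$. This is well defined because $\mr{ker}(\mf{h})$ is normal: $\sigma\tau\alpha=(\sigma\tau\sigma^{-1})\sigma\alpha$ for $\tau\in\mr{ker}(\mf{h})$.

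\emph{Equivariance.} For $\sigma\in\pi_{1}(B,b_{o})$ we compute
\[
\Phi(\sigma\alpha,\mf{h}(\sigma)p)=(p_{H}(\sigma\alpha),\mf{h}(\sigma)p)=\mf{h}(\sigma)\cdot(p_{H}(\alpha),p),
\]
where the right-hand side is the $\iota_{H}$-twisted diagonal action of $\mr{Hol}(b_{o})$ on $B_{H}\times F$. Hence $\Pi_{\iota_{H}}\comp\Phi$ is constant on $\pi_{1}(B,b_{o})$-orbits, so it descends to a map $\varphi\colon \wh{B}\times_{\mf{h}}F\to B_{H}\times_{\iota_{H}}F$. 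Commutativity of the diagram is immediate: $\pi_{\iota_{H}}(\Pi_{\iota_{H}}(p_{H}(\alpha),p))=\varpi_{H}(p_{H}(\alpha))=\wh{p}(\alpha)=\pi_{\mf{h}}(\Pi_{\mf{h}}(\alpha,p))$.

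\emph{Bijectivity.} Surjectivity follows because $p_{H}$ is onto. For injectivity, suppose $\Pi_{\iota_{H}}(p_{H}(\alpha),p)=\Pi_{\iota_{H}}(p_{H}(\beta),q)$. Then there is $h=\mf{h}(\sigma)$ with $p_{H}(\beta)=p_{H}(\sigma\alpha)$ and $q=\mf{h}(\sigma)p$. Hence $\beta=\tau\sigma\alpha$ for some $\tau\in\mr{ker}(\mf{h})$, and $\mf{h}(\tau\sigma)=\mf{h}(\sigma)$. So $(\beta,q)=(\tau\sigma)(\alpha,p)$, and the two points coincide in $\wh{B}\times_{\mf{h}}F$.

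\emph{Isometry.} $\Phi=p_{H}\times\mr{id}_{F}$ is a local isometry of the Riemannian products, and $\Pi_{\mf{h}}$, $\Pi_{\iota_{H}}$ are Riemannian coverings. Therefore $\varphi$ is locally $\Pi_{\iota_{H}}\comp\Phi\comp(\Pi_{\mf{h}}|_{U})^{-1}$, a local isometry, in particular a smooth local diffeomorphism. A bijective local diffeomorphism is a diffeomorphism, so $\varphi$ is an isometric diffeomorphism. Composing with the isometry of Proposition~\ref{ST1} gives $M\cong B_{H}\times_{\iota_{H}}F$.

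The only delicate point is bookkeeping: the action of $\mr{Hol}(b_{o})$ on $B_{H}$ must be made compatible with the sign/inverse convention $\mf{h}(\ispa{c})=k(c)^{-1}$. Using the identification via $\bar{\mf{h}}$ as above, rather than via $k$, avoids any mismatch.
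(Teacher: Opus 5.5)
Your proposal is correct and follows the same route as the paper. In both, $\varphi$ is the map induced by $\Phi$ on the quotients. It is a local isometry because $\Phi$, $\Pi_{\mf{h}}$ and $\Pi_{\iota_{H}}$ are local isometries, and surjectivity is immediate. Injectivity is proved exactly as you do, by writing $\beta=\tau\sigma\alpha$ with $\tau\in\mr{ker}(\mf{h})$ and noting $\mf{h}(\tau\sigma)=\mf{h}(\sigma)$. You go slightly further than the paper by explicitly checking that $\Phi$ is equivariant, so that $\varphi$ is well defined, and that the diagram commutes; the paper leaves both implicit.
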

\begin{proof}
The map $\varphi$ is defined so that the diagram 
\[
\xymatrix{
\wh{B} \times F \ar[r]^{\Phi} \ar[d]_{\Pi_{\mf{h}}} & B_{H} \times F \ar[d]^{\Pi_{\iota_{H}}} \\
\wh{B} \times_{\mf{h}} F \ar[r]_-{\varphi} & B_{H} \times_{\iota_{H}} F
}
\]
commutative. Thus $\varphi$ is a local isometry. It is clear that $\varphi$ is surjective. 
Suppose $\varphi(\Pi_{\mf{h}}(\alpha, p))=\varphi(\Pi_{\mf{h}}(\beta, q))$. 
Then $\Pi_{\iota_{H}}(p_{H}(\alpha), p)=\Pi_{\iota_{H}}(p_{H}(\beta), q)$, and hence there exists 
$\sigma \in \pi_{1}(B,b_{o})$ such that $p_{H}(\beta)=\mf{h}(\sigma) p_{H}(\alpha)$ and $q=\mf{h}(\sigma)p$. 
Since $\mf{h}(\sigma) p_{H}(\alpha)=p_{H}(\sigma \alpha)$, 
there exists $s \in \mr{ker}(\mf{h})$ such that $\beta=s \sigma \alpha$. 
Since $\mf{h}(s\sigma)p=\mf{h}(\sigma)p=q$, we have $(s\sigma)(\alpha,p)=(\beta,q)$.  
Hence $\varphi$ is injective and the assertion follows. 
\end{proof}
Summarizing the above discussion, we can identify three Riemannian submersions 
$\pi \colon M \to B$, $\pi_{\mf{h}} \colon \wh{B} \times_{\mf{h}} F \to B$ 
and $\pi_{\iota_{H}} \colon B_{E} \times_{\iota_{H}} F \to B$. 
The identifications are given by 
\begin{equation}\label{ID}
\begin{split}
& \wh{B} \times_{\mf{h}} F \ni \Pi_{\mf{h}}(\alpha,p) \mapsto k(c_{\alpha})p \in M, \\
& B_{H} \times_{\iota_{H}} F \ni \Pi_{\iota_{H}} (p_{H}(\alpha),p) \mapsto k(c_{\alpha})p \in M, 
\end{split}
\end{equation}
where $\alpha = \ispa{c_{\alpha}} \in \wh{B}$. 
\subsection{Unitary operator $W$ and its properties}\label{Uni}
As in Introduction, let $\pi \colon (M,g) \to (B,j)$ be compact connected Riemannian submersion 
such that each fiber $F_{b}$ over $b \in B$ is connected and totally geodesic in $M$ and that 
the horizontal distribution $\mc{H}$ is integrable. Fix the base-point $b_{o} \in B$ and set $F=F_{b_{o}}$. 
In what follows we identify $\pi \colon M \to B$ with the fiber product 
$\pi_{\iota_{H}} \colon B_{H} \times_{\iota_{H}} F \to B$. 
Thus we write simply $\pi$ for $\pi_{\iota_{H}}$ and $\Pi$ for $\Pi_{\iota_{H}}$. 
The flat vector bundle $\pi_{\rho} \colon \mc{E}_{\rho} \to B$ in $\eqref{FVB1}$ is defined as 
the quotient $\mc{E}_{\rho}=(B_{H} \times L^{2}(F))/{\sim}$, where, for 
$(x_{1},f_{1}), (x_{2},f_{2}) \in B_{H} \times L^{2}(F)$, 
$(x_{1},f_{1}) \sim (x_{2},f_{2})$ if and only if 
$(x_{2},f_{2})=(h x_{1}, \rho(h)f_{1})$ with some $h \in \mr{Hol}(b_{o})$. 
\par
Let $s \colon B \to \mc{E}_{\rho}$ be a smooth section of $\mc{E}_{\rho}$. 
It is well-known that there exists a unique smooth $\rho$-equivariant map $\psi_{s} \colon B_{H} \to L^{2}(F)$ 
such that $s(b)=[x, \psi_{s}(x)]$ for any $x \in B_{H}$ with $\varpi_{H}(x)=b$. 
By the correspondence $s \leftrightarrow \psi_{s}$, we identify the space of smooth sections 
$C^{\infty}(\mc{E}_{\rho})$ with the space of smooth $\rho$-equivariant maps from $B_{H}$ to $L^{2}(F)$. 
Let $D$ be an open fundamental domain of the covering $\varpi_{H} \colon B_{H} \to B$. 
The norm $\|s\|_{L^{2}(\mc{E}_{\rho})}$ is, in terms of the map $\psi_{s}$, given by 
\begin{equation}\label{Norm1}
\|s\|_{L^{2}(\mc{E}_{\rho})} =\int_{D} \|\psi_{s}(x)\|_{L^{2}(F)}^{2}\,d\mu_{B_{H}}(x). 
\end{equation}
The integrand $\|\psi_{s}(x)\|_{L^{2}(F)}^{2}$ of the integral in the left-hand side 
is invariant under the action of $\mr{Hol}(b_{o})$ and 
hence is regarded as a function on $B$. Thus the integral on $D$ equals that on $B$. 
Any $f \in C^{\infty}(M)$ corresponds to 
a smooth function $\Pi^{*}f$ on $B_{H} \times F$ invariant under the action of $\mr{Hol}(b_{o})$. 
Therefore, we have the following map; 
\begin{equation}\label{UED}
W \colon C^{\infty}(M) \to C^{\infty}(\mc{E}_{\rho}),\quad (Wf)(x)(p) = f(\Pi(x,p)) \quad 
((x,p) \in B_{H} \times F). 
\end{equation}
The restriction of $\varpi_{H}$ on $D$ is an isometry onto a connected open set $U$ in $B$. 
Since the covering $\varpi_{H}$ is regular, the fiber $F_{b}$ over $b \in U$ can be written as 
\[
F_{b}=\{\Pi(x_{b},p) \mid p \in F\},\quad x_{b}=\varpi_{H}|_{D}^{-1}(b). 
\]
From this it follows that the $L^{2}$-norm of $f \in C^{\infty}(M)$ can be written as 
\[
\|f\|_{L^{2}(M)}^{2}=\int_{B} \int_{F} |f(\Pi(x_{b},p))|^{2}\,d\mu_{F} d\mu_{B}
=\int_{D} \|Wf (x) \|_{L^{2}(F)}^{2}\,d\mu_{B_{H}}(x) =\|Wf\|_{L^{2}(\mc{E}_{\rho})}^{2}. 
\]
The image $WC^{\infty}(M)$ is the subspace $C^{\infty}_{\mr{s}}(\mc{E}_{\rho})$ of $C^{\infty}(\mc{E}_{\rho})$ given by 
\[
C^{\infty}_{\mr{s}} (\mc{E}_{\rho}) =
\{\phi \in  C^{\infty}(\mc{E}_{\rho}) \mid B_{H} \times F \ni (x,p) \mapsto \phi(x)(p) \mbox{ is smooth}\}. 
\]
\begin{lem}\label{UEL3}
$W$ is extended to a unitary operator from $L^{2}(M)$ onto $L^{2}(\mc{E}_{\rho})$. 
\end{lem}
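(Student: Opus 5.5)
Since the paper has already shown that $\|Wf\|_{L^{2}(\mc{E}_{\rho})}=\|f\|_{L^{2}(M)}$ for $f\in C^{\infty}(M)$, the plan has two parts. First, extend $W$ by continuity to an isometry $L^{2}(M)\to L^{2}(\mc{E}_{\rho})$. Second, show that the image is dense; an isometry with closed, dense range is unitary. Because $C^{\infty}(M)$ is dense in $L^{2}(M)$, the extension is immediate. The range of the extension is the closure of $WC^{\infty}(M)=C^{\infty}_{\mr{s}}(\mc{E}_{\rho})$, so the whole matter is to prove that $C^{\infty}_{\mr{s}}(\mc{E}_{\rho})$ is dense in $L^{2}(\mc{E}_{\rho})$. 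By definition $L^{2}(\mc{E}_{\rho})$ is the completion of $C^{\infty}(\mc{E}_{\rho})$, so it suffices to approximate an arbitrary $s\in C^{\infty}(\mc{E}_{\rho})$ in $L^{2}(\mc{E}_{\rho})$ by elements of $C^{\infty}_{\mr{s}}(\mc{E}_{\rho})$.

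For the density I would localize, then smooth in $F$ and undo the localization. Cover $B$ by finitely many evenly covered connected open sets $U_{1},\dots,U_{N}$, choose a subordinate partition of unity $\{\chi_{i}\}$, and write $s=\sum_{i}\chi_{i}s$. It is then enough to treat a section $s$ supported in a single $U=U_{i}$. Pick one sheet $\wt{U}\subset B_{H}$ over $U$ and let $\psi=\psi_{s}|_{\wt{U}}\colon \wt{U}\to L^{2}(F)$; this is a smooth map with compact support in $\wt{U}$. By $\rho$-equivariance, $\psi$ determines $s$ entirely, and $\|s\|_{L^{2}(\mc{E}_{\rho})}^{2}=\int_{\wt{U}}\|\psi(x)\|^{2}_{L^{2}(F)}\,d\mu_{B_{H}}(x)$.

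Next, approximate $\psi$ in $L^{2}(\wt{U};L^{2}(F))\cong L^{2}(\wt{U}\times F)$ by a function $u\in C_{0}^{\infty}(\wt{U}\times F)$; this is possible because $C_{0}^{\infty}$ is dense in $L^{2}(\wt{U}\times F)$. Extend $u$ equivariantly: on the sheet $h\wt{U}$ set $\wt{u}(hx,p)=u(x,h^{-1}p)$, and set $\wt{u}=0$ off $\varpi_{H}^{-1}(U)\times F$. The sheets $h\wt{U}$ are disjoint, and $u$ has compact support in $\wt{U}\times F$, so $\wt{u}$ is smooth on $B_{H}\times F$. It is also $\mr{Hol}(b_{o})$-invariant for the diagonal action, so it descends to a smooth function $f$ on $M$, and $Wf$ corresponds to $\wt{u}$. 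Since $\rho(h)$ is unitary (the action on $F$ is isometric), $\|Wf-s\|_{L^{2}(\mc{E}_{\rho})}=\|u-\psi\|_{L^{2}(\wt{U}\times F)}$, which can be made as small as desired.

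I expect the main obstacle to be the identification $L^{2}(\wt{U};L^{2}(F))\cong L^{2}(\wt{U}\times F)$, together with the density of $C_{0}^{\infty}(\wt{U}\times F)$ there, handled so that compact support in $\wt{U}$ is preserved. I would handle this by approximating with finite sums $\sum_{k}a_{k}(x)e_{k}(p)$, where $a_{k}\in C_{0}^{\infty}(\wt{U})$ and the $e_{k}$ are eigenfunctions of $\Delta_{F}$. Because $\psi$ is smooth and compactly supported, truncating its expansion in the eigenbasis converges in $L^{2}$, and each coefficient $\ispa{\psi(x),e_{k}}$ is already smooth with compact support in $\wt{U}$. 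The only other point needing care is the equality of the two norms after equivariant extension, and this follows from formula \eqref{Norm1} with the fundamental domain $D$ chosen to contain $\wt{U}$.
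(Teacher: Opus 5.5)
Your proof is correct, but you reach the density of $C^{\infty}_{\mr{s}}(\mc{E}_{\rho})$ by a different route from the paper.

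The paper does not localize. It truncates an arbitrary $\phi\in C^{\infty}(\mc{E}_{\rho})$ globally, setting $\phi_{\lambda}(x)=\sum_{\mu\le\lambda}P_{\mu}\phi(x)$, where $P_{\mu}$ is the orthogonal projection onto $E(\mu)$. It gets $\rho$-equivariance of $\phi_{\lambda}$ for free, because each $\rho(h)$ is induced by an isometry of $F$ and so commutes with $P_{\mu}$. $L^{2}$-convergence then follows by dominated convergence, exactly as in your truncation step.

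You instead build equivariance in by hand. The partition of unity reduces you to sections supported over an evenly covered $U$. You truncate on a single sheet $\wt{U}$, where equivariance is not an issue, so you could even cut off in the middle of an eigenspace or use any orthonormal basis of smooth functions on $F$. You then extend equivariantly, at the cost of checking that the extension is smooth. It is, because it vanishes outside the closed set $\varpi_{H}^{-1}(\varpi_{H}(K))\times F$, where $K\subset\wt{U}$ is compact.

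What each route buys:
\begin{itemize}
\item Your route never uses that $\rho$ preserves the eigenspaces $E(\mu)$. It would therefore go through for a unitary representation induced by any smooth volume-preserving action on $F$.
\item The paper's route is shorter. Its approximation $\phi\mapsto\phi_{\lambda}$ visibly commutes with $\Delta_{\rho}$, since $P_{\mu}$ acts only in the fiber. This is exactly what is reused, as graph-norm approximation, in the proof of Theorem \ref{MN1} and in the computation of $\msc{sg}$. Your construction gives the same property, but only after repeating the bookkeeping sheet by sheet.
\end{itemize}

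Finally, the identification $L^{2}(\wt{U};L^{2}(F))\cong L^{2}(\wt{U}\times F)$ that you flag as the main obstacle is not needed. With your explicit truncation you only ever estimate $\int_{\wt{U}}\|u(x,\cdot)-\psi(x)\|_{L^{2}(F)}^{2}\,d\mu_{B_{H}}(x)$.
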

\begin{proof}
Since $L^{2}(\mc{E}_{\rho})$ is the completion of $C^{\infty}(\mc{E}_{\rho})$ with respect to the $L^{2}$-norm, 
it is enough to show that $C^{\infty}_{\mr{s}}(\mc{E}_{\rho})$ is dense in $L^{2}(\mc{E}_{\rho})$. 
Take $\ve>0$, $\xi \in L^{2}(\mc{E}_{\rho})$, and $\phi \in C^{\infty}(\mc{E}_{\rho})$ such that 
$\|\xi -\phi\|_{L^{2}(\mc{E}_{\rho})} <\ve/2$. 
For any $\mu \in \spec(\Delta_{F})$, let $P_{\mu}$ denote the orthogonal projection on $L^{2}(F)$ onto 
the eigenspace $E(\mu)$ corresponding to the eigenvalue $\mu$ of $\Delta_{F}$. 
Note that $E(\mu)$ is a finite dimensional subspace in $C^{\infty}(F)$. 
For any $\lambda>0$ and $x \in B_{H}$, we set 
\[
\phi_{\lambda}(x)=\sum_{ \mu \in \spec(\Delta_{F}),\, \mu \leq \lambda} P_{\mu}\phi(x). 
\]
Let $f_{1},\ldots,f_{m(\mu)}$ be an orthonormal basis of $E(\mu)$ with $m(\mu)=\dim E(\mu)$. 
Then, 
\[
P_{\mu}\phi(x)=\sum_{j=1}^{m(\mu)} \ispa{\phi(x), f_{j}}_{L^{2}(F)} f_{j}. 
\]
Since $\phi \colon B_{H} \to L^{2}(F)$ is smooth, the function $\ispa{\phi(x), f_{j}}_{L^{2}(F)}$ 
on $B_{H}$ is smooth. Since $f_{j} \in C^{\infty}(F)$, we see that 
the function $(x,p) \mapsto \phi_{\lambda}(x)(p)$ is smooth on $B_{H} \times F$. 
Since each $h \in \mr{Hol}(b_{o})$ is an isometry on $F$, $\rho(h)$ 
commutes with $P_{\mu}$, and the $\mr{Hol}(b_{o})$-equivariance of $\phi$ implies 
\[
\begin{split}
\phi_{\lambda}(hx) & =\sum_{\mu \in \spec(\Delta_{F}),\, \mu \leq \lambda}P_{\mu}\phi(hx) 
=  \sum_{\mu \in \spec(\Delta_{F}),\, \mu \leq \lambda}P_{\mu}\rho(h)\phi(x)  \\
& = \sum_{\mu \in \spec(\Delta_{F}),\, \mu \leq \lambda}\rho(h)P_{\mu}\phi(x) =\rho(h)\phi_{\lambda}(x).  
\end{split}
\]
This shows $\phi_{\lambda} \in C^{\infty}_{\mr{s}}(\mc{E}_{\rho})$. 
Since $\|\phi_{\lambda}(x)\|_{L^{2}(F)} \leq \|\phi(x)\|_{L^{2}(F)}$ and 
$\|\phi_{\lambda}(x) -\phi(x)\|_{L^{2}(F)} \to 0$ as $\lambda \to \infty$ for each $x \in B_{H}$, 
the Lebesgue convergence theorem shows that $\|\phi_{\lambda} -\phi\|_{L^{2}(\mc{E}_{\rho})} \to 0$ as 
$\lambda \to \infty$. Take $\lambda$ large enough so that $\|\phi_{\lambda} -\phi\|_{L^{2}(\mc{E}_{\rho})} <\ve/2$. 
Then 
\[
\|\xi -\phi_{\lambda}\|_{L^{2}(\mc{E}_{\rho})} \leq \|\xi - \phi\|_{L^{2}(\mc{E}_{\rho})} +
\|\phi -\phi_{\lambda}\|_{L^{2}(\mc{E}_{\rho})} \leq \ve, 
\]
which shows that $C^{\infty}_{\mr{s}}(\mc{E}_{\rho})$ is dense in $L^{2}(\mc{E}_{\rho})$. 
\end{proof}
$L^{2}(F)$ has a decomposition into the eigenspaces of the Laplacian on $F$, 
\[
L^{2}(F)=\bigoplus_{\mu \in \spec(\Delta_{F})} E(\mu). 
\]
The representation $\rho$ of $\mr{Hol}(b_{o})$ is decomposed into 
the subrepresentations determined by restricting it to the eigenspaces $E(\mu)$, 
for which we write
\[
\rho = \bigoplus_{\mu \in \spec(\Delta_{F})} \rho_{\mu}, \quad 
\rho_{\mu} \colon \mr{Hol}(b_{o}) \to U(E(\mu)), \quad 
\rho_{\mu}(h)=\rho(h)|_{E(\mu)} \quad (h \in \mr{Hol}(b_{o})). 
\]
By this decomposition, the flat vector bundle $\mc{E}_{\rho}$ has the natural decomposition 
\[
\mc{E}_{\rho}=
\bigoplus_{\mu \in \spec(\Delta_{F})} \mc{E}_{\rho_{\mu}}
\]
into the finite-rank flat vector bundles $\mc{E}_{\rho_{\mu}}=B_{H} \times_{\rho_{\mu}} E(\mu)$. 
\begin{lem}\label{UED2}
The unitary operator $W \colon L^{2}(M) \to L^{2}(\mc{E}_{\rho})$ defined in Lemma $\ref{UEL3}$ satisfies 
\begin{equation}\label{CM1}
W C^{\infty}(M,\mu) =C^{\infty} (\mc{E}_{\rho_{\mu}}). 
\end{equation}
\end{lem}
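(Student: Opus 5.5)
The plan is to prove the two inclusions in \eqref{CM1} separately, working throughout in the identification $M \cong B_{H}\times_{\iota_{H}} F$ and describing sections of $\mc{E}_{\rho}$ by $\rho$-equivariant maps $\psi \colon B_{H} \to L^{2}(F)$. Here $C^{\infty}(\mc{E}_{\rho_{\mu}})$ is viewed as the subspace of smooth equivariant maps taking values in the finite-dimensional space $E(\mu)$. The key observation is that the vertical Laplacian becomes the fiberwise Laplacian under $W$: for $f \in C^{\infty}(M)$ and $x \in B_{H}$, $(W\Delta_{V}f)(x) = \Delta_{F}\bigl((Wf)(x)\bigr)$.

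To prove this identity, fix $x \in B_{H}$ and set $b=\varpi_{H}(x)$. The map $p \mapsto \Pi(x,p)$ is an isometry from $F$ onto the fiber $F_{b}$. This follows from Proposition \ref{FPL} and the fact that $\Pi$ is a local isometry restricting to a bijection $\{x\}\times F \to F_{b}$. Since the Laplacian commutes with isometries, $\Delta_{F_{b}}(f|_{F_{b}})\comp\Pi(x,\cdot) = \Delta_{F}\bigl(f\comp\Pi(x,\cdot)\bigr)$. By the definition of $\Delta_{V}$, this is exactly the claimed identity.

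For the inclusion $\subset$, take $f \in C^{\infty}(M,\mu)$. The identity gives $(Wf)(x) \in E(\mu)$ for every $x$. We already know $Wf \in C^{\infty}_{\mr{s}}(\mc{E}_{\rho})$ is smooth and $\rho$-equivariant. Since $\rho$ preserves $E(\mu)$ (each $h$ is an isometry, as used in Lemma \ref{UEL3}), $Wf$ is a smooth $\rho_{\mu}$-equivariant map into $E(\mu)$, that is, an element of $C^{\infty}(\mc{E}_{\rho_{\mu}})$.

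For the reverse inclusion $\supset$, take a smooth $\rho_{\mu}$-equivariant $\psi \colon B_{H}\to E(\mu)$. Choose an orthonormal basis $f_{1},\dots,f_{m(\mu)}$ of $E(\mu)$ and write
\[
\psi(x)=\sum_{j} a_{j}(x) f_{j}, \qquad a_{j}(x)=\ispa{\psi(x),f_{j}}_{L^{2}(F)} .
\]
Each $a_{j}$ is smooth on $B_{H}$ and each $f_{j}\in C^{\infty}(F)$, so $F_{0}(x,p)=\psi(x)(p)$ is smooth on $B_{H}\times F$, exactly as in the proof of Lemma \ref{UEL3}. The equivariance $\psi(hx)=\rho(h)\psi(x)$ reads $F_{0}(hx,hp)=F_{0}(x,p)$, so $F_{0}$ is $\mr{Hol}(b_{o})$-invariant and descends to a smooth $f$ on $M$ with $Wf=\psi$. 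Finally, the identity gives $W\Delta_{V}f=\Delta_{F}\psi=\mu\psi=W(\mu f)$. Since $W$ is injective by Lemma \ref{UEL3}, $\Delta_{V}f=\mu f$, so $f \in C^{\infty}(M,\mu)$.

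The main technical point is the identity $(W\Delta_{V}f)(x)=\Delta_{F}\bigl((Wf)(x)\bigr)$. In particular, one must check that $\{x\}\times F \to F_{b}$ is an isometry onto the fiber with its induced metric, not merely a local isometry. This holds because $\Pi$ is injective on $\{x\}\times F$: $\mr{Hol}(b_{o})$ acts freely on $B_{H}$. It is also a local isometry, and the fiber's induced metric is the restriction of $g$. The remaining steps are bookkeeping.
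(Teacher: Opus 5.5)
Your proof is correct and follows essentially the same route as the paper: both directions rest on the intertwining identity $(W\Delta_{V}f)(x)=\Delta_{F}\bigl((Wf)(x)\bigr)$, which the paper derives from the fiber isometry $k(c_{\alpha})\colon F\to F_{c_{\alpha}(1)}$ (this is your map $p\mapsto \Pi(x,p)$ with $x=p_{H}(\alpha)$ under the identification \eqref{ID}), and the converse uses $C^{\infty}_{\mr{s}}(\mc{E}_{\rho_{\mu}})=C^{\infty}(\mc{E}_{\rho_{\mu}})$ to write a given section as $Wf$. You also make explicit two points the paper takes for granted: the basis-expansion argument for that equality, and the freeness argument showing that $\Pi(x,\cdot)$ is a bijective isometry onto $F_{b}$.
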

\begin{proof}
First, note that $C^{\infty}_{\mr{s}} (\mc{E}_{\rho_{\mu}})$ coincides with $C^{\infty}(\mc{E}_{\rho_{\mu}})$. 
Take $\alpha=\ispa{c_{\alpha}} \in \wh{B}$. 
Since $k(c_{\alpha}) \colon F \to F_{c_{\alpha}(1)}$ is an isometry, we have 
\[
\Delta_{F} \comp k(c_{\alpha})^{*} = k(c_{\alpha})^{*} \comp \Delta_{F_{c_{\alpha}(1)}}. 
\]
Take $f \in C^{\infty}(M,\mu)$ and write $f_{b}=f|_{F_{b}}$ for each $b \in B$. Then, 
for $p \in F$, 
\[
\begin{split}
(\Delta_{F} k(c_{\alpha})^{*}f_{c_{\alpha}(1)}) (p) 
& = (k(c_{\alpha})^{*}\Delta_{F_{c_{\alpha}(1)}} f_{c_{\alpha}(1)}) (p) 
 = (\Delta_{F_{c_{\alpha}(1)}} f_{c_{\alpha}(1)}) (k(c_{\alpha})p) \\
& = (\Delta_{V} f) (k(c_{\alpha})p) 
= \mu f(k(c_{\alpha})p) =\mu (k(c_{\alpha})^{*}f_{c_{\alpha}(1)} )(p). 
\end{split}
\]
In terms of the isometry $k(c_{\alpha})$, the section $Wf$ is, as an equivariant map from $B_{H}$ to $L^{2}(F)$, 
written in the form $(Wf)(x)(p)=f(k(c_{\alpha})p)$ with $\alpha=\ispa{c_{\alpha}} \in \wh{B}$, $x=p_{H}(\alpha) \in B_{H}$. 
Hence, the above computation shows that the function $(Wf)(x)=k(c_{\alpha})^{*}f_{c_{\alpha}(1)}$ on $F$ 
is an eigenfunction of $\Delta_{F}$ with the eigenvalue $\mu$. Therefore, 
we have $Wf \in C^{\infty}(\mc{E}_{\rho_{\mu}})$. 
Conversely, take $s \in C^{\infty}(\mc{E}_{\rho_{\mu}})$ 
and write $\phi_{s} \colon B_{H} \to E(\mu)$ the $\rho_{\mu}$-equivariant 
map corresponding to the section $s$. Regarding $\phi_{s}$ as an $\rho$-equivariant map 
from $B_{H}$ to $L^{2}(F)$, we take $f \in C^{\infty}(M)$ with $\phi_{s}=Wf$. 
Then, for $\alpha=\ispa{c_{\alpha}} \in \wh{B}$, we have $(Wf)(p_{H}(\alpha)) \in E(\mu)$ and hence 
$\Delta_{F}[ Wf(p_{H}(\alpha))]=\mu Wf(p_{H}(\alpha))$. Thus, for $p \in F$, 
\[
\begin{split}
\mu Wf (p_{H}(\alpha)) (p) & 
= \Delta_{F} k(c_{\alpha})^{*}f_{c_{\alpha}(1)} (p) = k(c_{\alpha})^{*} \Delta_{F_{c_{\alpha}(1)}} f_{c_{\alpha}(1)} (p) \\
& = (\Delta_{F_{c_{\alpha}(1)}} f_{c_{\alpha}(1)}) (k(c_{\alpha}) p) 
= (\Delta_{V}f) (k(c_{\alpha})p)  =(W\Delta_{V}f) (p_{H}(\alpha))(p). 
\end{split}
\]
This shows the formula $\mu W(f) =W(\Delta_{V}f)$, and hence $\mu f =\Delta_{V}f$. 
Therefore we conclude $f \in C^{\infty}(M,\mu)$ and obtain $\eqref{CM1}$. 
\end{proof}
\subsection{Twisted Laplacian and proof of Theorem \ref{MN1}} \label{Twi} 
We recall the definition of the twisted Laplacian acting on $C^{\infty}(\mc{E}_{\rho})$. 
Any vector field $X$ on $B$ defines a first-order differential operator, $X^{\rho}$, 
acting on $C^{\infty}(\mc{E}_{\rho})$. To define $X^{\rho}$, let $X^{H}$ denote 
the lift of $X$ on $B_{H}$. Let $s \in C^{\infty}(\mc{E}_{\rho})$ and let $\phi_{s} \colon B_{H} \to L^{2}(F)$ 
be the $\rho$-equivariant smooth map corresponding to $s$. 
Then the function $X^{H} \phi_{s}$ is also $\rho$-equivariant, 
and thus defines an element in $C^{\infty}(\mc{E}_{\rho})$ which is denoted by $X^{\rho}s$. 
By using this, the twisted Laplacian, $\Delta_{\rho}$, is defined as 
\begin{equation}\label{TLAP}
(\Delta_{\rho}s)(b)=-\sum_{k}(X_{k}^{\rho}X_{k}^{\rho}s)(b) +((\nabla^{B}_{X_{k}}X_{k})^{\rho}s)(b) \quad 
(s \in C^{\infty}(\mc{E}_{\rho}), b \in B), 
\end{equation}
where $\{X_{1},\ldots,X_{p}\}$ is a local orthonormal frame of the tangent bundle $TB$ of $B$ 
on a neighborhood of $b \in B$ and $\nabla^{B}$ is the Levi-Civita connection on $B$. 
It follows from this definition that the $\rho$-equivariant smooth map corresponding to 
$\Delta_{\rho}s$ is $\Delta_{B_{H}}\phi_{s}$ where $\phi_{s}$ is the $\rho$-equivariant smooth map 
corresponding to $s$. 
\begin{lem}\label{UEL4}
Let $X$ be a vector field on $B$ and let $\ol{X}$ denote the horizontal lift of $X$ to $M$. 
Then, for any $f \in C^{\infty}(M)$, we have $W\ol{X}f=X^{\rho}Wf$. 
\end{lem}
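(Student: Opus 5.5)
We work on the cover $B_{H}\times F$ and compare pullbacks. Fix $f\in C^{\infty}(M)$ and write $\wt f=\Pi^{*}f$, a smooth $\mr{Hol}(b_{o})$-invariant function on $B_{H}\times F$. By definition \eqref{UED}, the $\rho$-equivariant map corresponding to $Wf$ is $x\mapsto \wt f(x,\cdot)$. By the definition of $X^{\rho}$, the map corresponding to $X^{\rho}Wf$ is $X^{H}$ applied to this $L^{2}(F)$-valued map. Its value at $(x,p)$ is the partial derivative $(X^{H}\otimes 0)\wt f(x,p)$, i.e.\ the derivative of $\wt f$ along the vector $(X^{H}_{x},0)\in T_{(x,p)}(B_{H}\times F)$. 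The claim therefore reduces to the identity
\[
(\ol{X}f)(\Pi(x,p)) = \bigl((X^{H},0)\wt f\bigr)(x,p),
\]
which in turn follows from $d\Pi_{(x,p)}(X^{H}_{x},0)=\ol{X}_{\Pi(x,p)}$.

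To verify this vector identity, we use Proposition \ref{FPL} (applied with $\wt B=B_{H}$ and $\phi=\iota_{H}$, together with the identification of Lemma \ref{CHL}). Its proof shows that the horizontal space at $\Pi(x,p)$ is $d\Pi_{(x,p)}(T_{x}B_{H}\times\{0\})$. Moreover, since $\pi\comp\Pi(x,p)=\varpi_{H}(x)$, we get
\[
d\pi\bigl(d\Pi(X^{H}_{x},0)\bigr)=d\varpi_{H}(X^{H}_{x})=X_{\varpi_{H}(x)}.
\]
Hence $d\Pi(X^{H}_{x},0)$ is a horizontal vector projecting to $X$, and by uniqueness of horizontal lifts it equals $\ol{X}_{\Pi(x,p)}$. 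The chain rule then gives the identity pointwise.

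The one point needing care is the interchange between the derivative of the $L^{2}(F)$-valued map and the pointwise derivative in $x$. We first prove the identity for $f\in C^{\infty}(M)$. In that case $\wt f$ is jointly smooth and $F$ is compact, so difference quotients in $x$ converge uniformly in $p$, hence in $L^{2}(F)$. The derivative of $x\mapsto\wt f(x,\cdot)$ in $L^{2}(F)$ is therefore the function $p\mapsto (X^{H},0)\wt f(x,p)$. This is the main (mild) obstacle; everything else is the chain rule. Equivariance of both sides needs no extra check, since both correspond to sections by construction.
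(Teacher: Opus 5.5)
Your proposal is correct and follows the paper's proof. Both arguments pull $f$ back to the $\mr{Hol}(b_{o})$-invariant function $\Pi^{*}f$ on $B_{H}\times F$, use the proof of Proposition~\ref{FPL} to obtain $d\Pi(X^{H},0)=\ol{X}$, and conclude by the chain rule. You also spell out two points the paper leaves implicit: uniqueness of the horizontal lift is what gives $d\Pi(X^{H},0)=\ol{X}$, and the $L^{2}(F)$-valued derivative of $x\mapsto \Pi^{*}f(x,\cdot)$ agrees with the pointwise partial derivative because the difference quotients converge uniformly on compact $F$.
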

\begin{proof}
Identifying $M$ with $B_{H} \times_{\iota_{H}} F$ as is given in $\eqref{ID}$, 
$f \in C^{\infty}(M)$ can be identified with the function $\Pi^{*}f$ on $B_{H} \times F$ 
invariant under the action of $\mr{Hol}(b_{o})$. 
Let $X^{H}$ denote the lift of $X$ on $B_{H}$ and consider $X^{H}$ as 
a vector field on $B_{H} \times F$ in an obvious way. 
Then, by the proof of Proposition $\ref{FPL}$, we have $\ol{X}=d\Pi (X^{H})$. 
Therefore, we see $\Pi^{*}\ol{X}f = X^{H} \Pi^{*}f$, and hence $W \ol{X}f = X^{\rho} Wf$. 
\end{proof}
\noindent{\bf Proof of Theorem \ref{MN1}:}\hspace{1pt} 
We first show that $\Delta_{\rho}Wf=W \Delta_{H}f$ for $f \in C^{\infty}(M)$. Take $b \in B$ and 
an orthonormal frame $\{X_{k}\}_{k=1}^{p}$ of the tangent bundle of $B$ near $b$. 
Then, 
\[
\Delta_{H}f =- \sum_{k} \ol{X}_{k}\ol{X}_{k}f + \sum_{k}\nabla^{M}_{\ol{X}_{k}}\ol{X}_{k} f, 
\]
where $\nabla^{M}$ is the Levi-Civita connection of $(M,g)$. 
Since the horizontal lift of $\nabla^{B}_{X}X$ is $\nabla^{M}_{\ol{X}}\ol{X}$, 
it follows from Lemma $\ref{UEL4}$ that $\Delta_{\rho}W=W\Delta_{H}$ on $C^{\infty}(M)$. 
Since $\Delta_{H}$ has an $L^{2}$-orthonormal basis consisting of smooth eigenfunction, so does $\Delta_{\rho}$. 
Hence the symmetric operator $\Delta_{\rho}$ on $C^{\infty}(\mc{E}_{\rho})$ is essentially self-adjoint. 
Let $\ol{\Delta_{H}}$ be the closure of $\Delta_{H}$ with domain $\mc{D}(\ol{\Delta_{H}})$. 
Then, the same method as in the proof of Lemma $\ref{UEL3}$ shows that any $s \in C^{\infty}(\mc{E}_{\rho})$ 
can be approximated in the $L^{2}$-sense by sections $s_{\lambda} \in C^{\infty}_{\mr{s}}(\mc{E}_{\rho})$ 
such that $\Delta_{\rho}s$ is also approximated in the $L^{2}$-sense by $\Delta_{\rho}s_{\lambda}$. 
Therefore, $W\mc{D}(\ol{\Delta_{H}})$ contains $C^{\infty}(\mc{E}_{\rho})$. 
Thus $W \ol{\Delta_{H}} W^{*}$ is a self-adjoint extension of $\Delta_{\rho}$, and 
we have $W \ol{\Delta_{H}} W^{*} =\ol{\Delta_{\rho}}$, which conclude the assertion of Theorem $\ref{MN1}$. 
\hfill$\blacksquare$
\subsection{Proof of Corollary $\ref{TEN}$} \label{Pro}
We show $\Delta_{\rho}|_{C^{\infty} (\mc{E}_{\rho_{\mu}})} =\Delta_{\rho_{\mu}}$. 
Let $s \in C^{\infty}(\mc{E}_{\rho_{\mu}})$. 
Take $f \in C^{\infty}(M, \mu)$ with $Wf=s$. 
Since $\Delta_{V}\Delta_{H}=\Delta_{H}\Delta_{V}$, $\Delta_{H}f$ is contained in $C^{\infty}(M, \mu)$. 
Thus we have $W\Delta_{H}f \in C^{\infty}(\mc{E}_{\rho_{\mu}})$. 
Hence $\Delta_{\rho}s=\Delta_{\rho}Wf=W\Delta_{H}f \in C^{\infty} (\mc{E}_{\rho_{\mu}})$. 
According to the proof of Lemma $\ref{UEL4}$, 
we have $W\ol{X}f=X^{\rho_{\mu}}Wf$ for any vector field $X$ on $B$ and $f \in C^{\infty}(M, \mu)$, 
because $\rho_{\mu}$ is the restriction of $\rho$ to $E(\mu)$. 
Therefore, $W\Delta_{H}f=\Delta_{\rho_{\mu}}Wf$ for $f \in C^{\infty}(M, \mu)$. 
Hence $\Delta_{\rho}s=\Delta_{\rho_{\mu}}s$ for $s \in C^{\infty}(\mc{E}_{\rho_{\mu}})$. 
Since $E(\mu)$ is of finite-dimensional, the general spectral theory for elliptic operators shows Corollary $\ref{TEN}$. 
\hfill$\blacksquare$

\section{Spectral problem for the canonical variation} \label{CV}
This section is devoted to a proof of Theorem $\ref{BBC}$. 
Theorem $\ref{BBC}$ follows from Theorem $\ref{MN1}$ and Sunada's inequality (\cite{Su1}) 
with an observation concerning about the spectral gap 
\begin{equation}\label{SGD}
\msc{sg}=\inf_{0 \neq u \in \mathbf{1}^{\perp} \cap C^{\infty}(M)} 
\frac{\ispa{\Delta_{H}u,u}_{L^{2}(M)}}{\|u\|_{L^{2}(M)}^{2}}
\end{equation}
of the horizontal Laplacian, where $\mathbf{1}^{\perp}$ is the orthogonal complement 
in $L^{2}(M)$ of the space of constant functions. 
Since $d\mu_{(M,g_{t})}=t^{n-p} d\mu_{(M,g)}$, 
where $n-p=\dim F$, the $L^{2}$-inner product 
given by $g_{t}$ is essentially the same with that given by $g$, 
and thus, in the following, we use the $L^{2}$-inner product defined by the metric $g$. 
\subsection{Spectral gap of horizontal Laplacian}
It is straightforward to see that $\msc{sg}>0$ if and only if zero is a simple eigenvalue 
of $\Delta_{H}$ and an isolated point in $\spec(\Delta_{H})$. 
Therefore, according to \cite{BBB}, 7.7 Proposition, 
$\dsp \lim_{t \to \infty}\Lambda_{1}(M,g_{t})=+\infty$ if $\msc{sg}>0$. 
The following is a direct relationship between $\Lambda_{1}(M,g_{t})$ (or 
more precisely $\lambda_{1}(g_{t})$) and $\msc{sg}$. 
Lemma $\ref{SG}$ below holds without the assumption on the integrability of 
the horizontal distribution $\mc{H}$ but the following proof uses the assumption on the totally 
geodesic fibration. 
\begin{lem}\label{SG}
We have 
\[
\lim_{t \to \infty} \lambda_{1}(g_{t}) =\msc{sg}. 
\]
\end{lem}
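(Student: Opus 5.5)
The starting point is the standard formula for the Laplacian of the canonical variation when the fibers are totally geodesic (B\'{e}rard-Bergery--Bourguignon):
\[
\Delta_{g_{t}}=\Delta_{H}+t^{-2}\Delta_{V}.
\]
Scaling the fiber metric by $t^{2}$ leaves the fibers totally geodesic and the horizontal distribution unchanged. It also leaves the volume element of the fibers multiplied by the constant $t^{n-p}$, so the mean curvature term in $\Delta_{H}$ is unaffected. By \cite{BBB}, $\Delta_{H}$ and $\Delta_{V}$ commute. I will therefore work in an orthonormal basis $\{\phi_{j}\}$ of $L^{2}(M)$ consisting of smooth joint eigenfunctions, $\Delta_{H}\phi_{j}=a_{j}\phi_{j}$ and $\Delta_{V}\phi_{j}=\mu_{j}\phi_{j}$. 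The eigenvalues of $\Delta_{g_{t}}$ are then exactly the numbers $a_{j}+t^{-2}\mu_{j}$. Since the $L^{2}$-inner products for $g_{t}$ and $g$ differ only by the constant factor $t^{n-p}$, Rayleigh quotients may be computed with respect to $g$.

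\textbf{Lower bound.} Let $u\in\mathbf{1}^{\perp}\cap C^{\infty}(M)$. Because $\Delta_{V}\geq 0$,
\[
\ispa{\Delta_{g_{t}}u,u}=\ispa{\Delta_{H}u,u}+t^{-2}\ispa{\Delta_{V}u,u}\geq \ispa{\Delta_{H}u,u}\geq \msc{sg}\,\|u\|^{2}.
\]
Hence $\lambda_{1}(g_{t})\geq \msc{sg}$ for every $t>0$.

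\textbf{Upper bound.} Fix $\ve>0$ and choose $u\in\mathbf{1}^{\perp}\cap C^{\infty}(M)$ with $\ispa{\Delta_{H}u,u}\leq(\msc{sg}+\ve)\|u\|^{2}$. Then
\[
\lambda_{1}(g_{t})\leq \msc{sg}+\ve+t^{-2}\frac{\ispa{\Delta_{V}u,u}}{\|u\|^{2}},
\]
and since $u$ is smooth the last quotient is a fixed finite number. Letting $t\to\infty$ and then $\ve\to 0$ gives $\limsup_{t\to\infty}\lambda_{1}(g_{t})\leq\msc{sg}$.

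Combining the two bounds yields $\lim_{t\to\infty}\lambda_{1}(g_{t})=\msc{sg}$; the lower bound even shows that $\lambda_{1}(g_{t})$ is nonincreasing towards $\msc{sg}$. In particular the argument never requires $\msc{sg}$ to be attained.

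The main obstacle is justifying the decomposition $\Delta_{g_{t}}=\Delta_{H}+t^{-2}\Delta_{V}$, with the same $\Delta_{H}$ for every $t$. In a local adapted frame, $\Delta_{M}$ equals $-\sum(\ol{X}_{k}^{2}-\nabla_{\ol{X}_{k}}\ol{X}_{k})$ plus the vertical part. For $g_{t}$, the horizontal lifts are unchanged and the vertical orthonormal frame is rescaled by $t^{-1}$. Total geodesicity is what makes the horizontal components of $\nabla_{\ol{X}_{k}}\ol{X}_{k}$ and the mean curvature vector of the fibers vanish or stay independent of $t$. I would check this via O'Neill's tensors, using $T=0$; the tensor $A$ only enters through vertical components, which do not affect the horizontal operator. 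Integrability of $\mc{H}$ is not needed for this step.
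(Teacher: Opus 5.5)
Your proof is correct and is essentially the paper's argument: the lower bound comes from dropping the nonnegative term $t^{-2}\ispa{\Delta_{V}u,u}$ in the min-max quotient, and the upper bound from fixing a test function $u\in\mathbf{1}^{\perp}\cap C^{\infty}(M)$ and letting $t\to\infty$. The paper writes $\Delta_{t}=t^{-2}\Delta_{M}+(1-t^{-2})\Delta_{H}$ and simply cites 5.3 Proposition of B\'{e}rard-Bergery--Bourguignon for the decomposition you propose to verify via O'Neill's tensors. One side remark is misattributed: the monotonicity of $\lambda_{1}(g_{t})$ in $t$ does not follow from the lower bound $\lambda_{1}(g_{t})\geq\msc{sg}$. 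It follows instead from each Rayleigh quotient $\bigl(\ispa{\Delta_{H}u,u}+t^{-2}\ispa{\Delta_{V}u,u}\bigr)/\|u\|^{2}$ being nonincreasing in $t$; the claim is true but not needed.
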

\begin{proof}
Let $u_{t} \in C^{\infty}(M)$ be an eigenfunction of $\Delta_{t}$ 
with the first non-zero eigenvalue $\lambda(t):=\lambda_{1}(g_{t})$ of $g_{t}$ 
such that $\|u_{t}\|_{L^{2}(M)}=1$. In particular, $u_{t} \in \mathbf{1}^{\perp}$.  
Since $\Delta_{t}=t^{-2}\Delta_{V} +\Delta_{H}=t^{-2}\Delta_{M} +(1-t^{-2})\Delta_{H}$ 
(5.3 Proposition in \cite{BBB}) 
and $\Delta_{V}$ is non-negative, we see 
\[
\lambda(t) = \ispa{\Delta_{t} u_{t}, u_{t}}_{L^{2}(M)} 
\geq \ispa{\Delta_{H}u_{t},u_{t}}_{L^{2}(M)} \geq \msc{sg}. 
\]
Hence $\dsp \liminf_{t \to \infty} \lambda(t) \geq \msc{sg}$. 
The mini-max theorem shows that
\[
\lambda(t)=\inf_{0 \neq u \in \mathbf{1}^{\perp} \cap C^{\infty}(M)} 
\frac{\ispa{\Delta_{t}u,u}_{L^{2}(M)}}{\|u\|_{L^{2}(M)}^{2}}. 
\]
Take $u \in \mathbf{1}^{\perp} \cap C^{\infty}(M)$. Then, 
\[
\lambda(t) \|u\|_{L^{2}(M)}^{2} \leq \ispa{\Delta_{t} u,u}_{L^{2}(M)} =t^{-2}\ispa{\Delta_{M} u, u}_{L^{2}(M)} 
+(1-t^{-2}) \ispa{\Delta_{H}u,u}_{L^{2}(M)}. 
\]
Thus we have 
$\dsp \limsup_{t \to \infty} \lambda(t) \leq \frac{\ispa{\Delta_{H}u,u}_{L^{2}(M)}}{\|u\|_{L^{2}(M)}^{2}}$. 
Taking the infimum in $u \in \mathbf{1}^{\perp} \cap C^{\infty}(M)$, $u \neq 0$, 
we get $\dsp \limsup_{t \to \infty} \lambda(t) \leq \msc{sg}$. 
Thus we obtain 
\[
\msc{sg} \leq \liminf_{t \to \infty} \lambda_{1}(t) \leq \limsup_{t \to \infty} \lambda_{1}(t) \leq \msc{sg}. 
\]
Therefore, $\dsp \lim_{t \to \infty}\lambda_{1}(t)$ exists and equals $\msc{sg}$. 
\end{proof}
\subsection{Sunada's estimate and proof of Theorem \ref{BBC}}
According to Lemma $\ref{SG}$, to prove Theorem $\ref{BBC}$ 
it is enough to show that the spectral gap $\msc{sg}$ of $\Delta_{H}$ 
is positive under the assumption in Theorem $\ref{BBC}$. 
The representation $\rho \colon \mr{Hol}(b_{o}) \to U(L^{2}(F))$ is decomposed as 
$\rho={\bf 1} \oplus \rho_{o}$, where ${\bf 1}={\bf 1}_{\mr{Hol}(b_{o})}$ 
is the one-dimensional trivial sub-representation of $\rho$ on the space of constant functions $E(0)=\mb{C} \mbf{1}$, 
and $\rho_{o}$ is the orthogonal complement of ${\bf 1}$. 
In the statement of Theorem $\ref{BBC}$, $\rho$ is assumed to be ergodic. 
Thus $\rho_{o}$ has no non-zero invariant vector. 
Both of the representations ${\bf 1}$, $\rho_{o}$ define 
flat vector bundles $\mc{E}_{{\bf 1}}$, $\mc{E}_{\rho_{o}}$ 
and the twisted Laplacians, $\Delta_{{\bf 1}}$, $\Delta_{\rho_{o}}$, respectively. 
These operators are restrictions of $\Delta_{\rho}$ 
on $C^{\infty}(\mc{E}_{{\bf 1}})$ and on $C^{\infty}(\mc{E}_{\rho_{o}})$, respectively. 
Note that $\Delta_{{\bf 1}}$ is unitarily equivalent to the Laplacian $\Delta_{B}$ on $B$. 
Since $B$ is assumed to be connected, zero is a simple eigenvalue of $\Delta_{{\bf 1}}$. 
As our aim is to bound the bottom of the spectrum of $\Delta_{\rho}$ away from zero, 
one of important quantities is the first non-zero eigenvalue $\lambda_{1}(B)$ of $\Delta_{B}$. 
Likewise, for the bundle $\mc{E}_{\rho_{o}}$, 
an important quantity is the bottom of the spectrum of $\Delta_{\rho_{o}}$, defined by 
\begin{equation}\label{BTM}
\lambda_{0}(\rho_{o}) = 
\inf_{0 \neq s \in C^{\infty}(\mc{E}_{\rho_{o}})} 
\frac{
 \ispa{\Delta_{\rho_{o}}s, s}_{L^{2}(\mc{E}_{\rho_{o}})}
 }
 {
 \|s\|_{L^{2}(\mc{E}_{\rho_{o}})}^{2}
 }.
\end{equation}
The assumption in the statement of Theorem $\ref{BBC}$ that $\mr{Hol}(b_{o})$ satisfies 
{\it Kazhdan's property} (T) is to bound $\lambda_{0}(\rho_{o})$ from below. 
Let us review the definition of Kazhdan's property (T) following \cite{BHV}. 
Let $\Gamma$ be a finitely generated discrete group. 
For any finite generating set $Q$ in $\Gamma$ and 
any unitary representation $(H,\pi)$ of $\Gamma$, define 
\begin{equation}\label{PT1}
\kappa(\Gamma,Q,\pi) = \inf_{\xi \in H,\, \|\xi\|_{H}=1} \max_{x \in Q} \|\pi(x)\xi -\xi\|_{H}, 
\end{equation}
Define a quantity $\kappa(\Gamma,Q)$ by 
\begin{equation}\label{PT2}
\kappa(\Gamma,Q) = \inf_{\pi} \kappa(\Gamma, Q, \pi), 
\end{equation}
where $(H,\pi)$ runs over all equivalence classes of unitary representations of $\Gamma$ 
without non-zero invariant vectors. Then $\Gamma$ is said to have Kazhdan's Property (T)
if there exists a finite generating set $Q$ such that $\kappa(\Gamma, Q)>0$. 
Now, Sunada's inequality \cite{Su2} asserts that there exists a positive constant $C$ 
depending only on $B$ and $Q$ such that 
\begin{equation}\label{suI}
C \kappa(\mr{Hol}(b_{o}), Q, \pi)^{2} 
\leq \lambda_{0}(\pi), 
\end{equation}
where $\pi$ is any unitary representation of $\mr{Hol}(b_{o})$ 
on a separable Hilbert space, and $\lambda_{0}(\pi)$ is the bottom of the spectrum of $\Delta_{\pi}$. 
Therefore, applying $\eqref{suI}$ for $\pi=\rho_{o}$ and 
using the assumption that $\Gamma=\mr{Hol}(b_{o})$ has Kazhdan's Property (T), 
we obtain
\[
0<C \kappa(\mr{Hol}(b_{o}), Q)^{2}
\leq \lambda_{0}(\rho_{o}). 
\]
Therefore, to prove Theorem $\ref{BBC}$, it is enough to show the following. 
\begin{lem}
We have 
\[
\msc{sg}=\min\{\lambda_{1}(B), \lambda_{0}(\rho_{o})\}. 
\]
\end{lem}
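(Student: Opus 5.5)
The plan is to transport the Rayleigh quotient in the definition of $\msc{sg}$ to $L^{2}(\mc{E}_{\rho})$ through the unitary operator $W$ of Lemma \ref{UEL3}, and then to split it along the orthogonal decomposition $\mc{E}_{\rho}=\mc{E}_{\mathbf{1}}\oplus\mc{E}_{\rho_{o}}$, which is preserved by $\Delta_{\rho}$.

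The first step concerns the image of the constants. Since $W$ sends the constant function $\mathbf{1}$ on $M$ to the constant section of $\mc{E}_{\mathbf{1}}$ with value $\mathbf{1}\in E(0)$, unitarity gives
\[
W\bigl(\mathbf{1}^{\perp}\bigr)=\bigl(\mathbf{1}^{\perp}\cap L^{2}(\mc{E}_{\mathbf{1}})\bigr)\oplus L^{2}(\mc{E}_{\rho_{o}}).
\]
By the proof of Theorem \ref{MN1}, $\Delta_{\rho}W=W\Delta_{H}$ on $C^{\infty}(M)$. Hence for $u\in\mathbf{1}^{\perp}\cap C^{\infty}(M)$ we may write $Wu=s_{1}+s_{o}$, with $s_{1}$ a section of $\mc{E}_{\mathbf{1}}$ (equivalently, a function on $B$ orthogonal to constants) and $s_{o}$ a section of $\mc{E}_{\rho_{o}}$. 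Here $s_{1}=W(\text{fiberwise average of }u)$ is smooth, and $s_{o}$ is smooth as well. Because $\rho(h)$ commutes with the projection onto $E(0)$, both components are equivariant, so the splitting is compatible with $\Delta_{\rho}$: as in the proof of Corollary \ref{TEN}, $\Delta_{\rho}$ restricts to $\Delta_{\mathbf{1}}$ on the first summand and to $\Delta_{\rho_{o}}$ on the second, and the cross terms vanish. This yields
\[
\ispa{\Delta_{H}u,u}=\ispa{\Delta_{\mathbf{1}}s_{1},s_{1}}+\ispa{\Delta_{\rho_{o}}s_{o},s_{o}}\geq \lambda_{1}(B)\|s_{1}\|^{2}+\lambda_{0}(\rho_{o})\|s_{o}\|^{2}.
\]
The first inequality term uses the unitary equivalence $\Delta_{\mathbf{1}}\cong\Delta_{B}$ together with $s_{1}\perp$ constants. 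The right-hand side is at least $\min\{\lambda_{1}(B),\lambda_{0}(\rho_{o})\}\,\|u\|^{2}$, which proves $\msc{sg}\geq\min\{\lambda_{1}(B),\lambda_{0}(\rho_{o})\}$.

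For the reverse inequality we need test functions. Taking $u=\pi^{*}\phi_{1}$ with $\phi_{1}$ a first eigenfunction of $\Delta_{B}$ gives $u\perp\mathbf{1}$ and Rayleigh quotient exactly $\lambda_{1}(B)$; this uses $\Delta_{H}\pi^{*}=\pi^{*}\Delta_{B}$, which also follows from Lemma \ref{UEL4}. For $\lambda_{0}(\rho_{o})$, given $\varepsilon>0$ we choose $s\in C^{\infty}(\mc{E}_{\rho_{o}})$ whose Rayleigh quotient is within $\varepsilon$ of $\lambda_{0}(\rho_{o})$. The difficulty is that $W^{-1}s$ need not be smooth on $M$, since $C^{\infty}(\mc{E}_{\rho})$ is strictly larger than $C^{\infty}_{\mr{s}}(\mc{E}_{\rho})$. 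We handle this with the truncation $s_{\lambda}=\sum_{0<\mu\leq\lambda}P_{\mu}s$ from Lemma \ref{UEL3}. As noted in the proof of Theorem \ref{MN1}, $s_{\lambda}\to s$ and $\Delta_{\rho}s_{\lambda}\to\Delta_{\rho}s$ in $L^{2}$, since $P_{\mu}$ commutes with $\Delta_{\rho}$ (it acts only in the fiber variable). Moreover $s_{\lambda}$ still lies in $\mc{E}_{\rho_{o}}$ because $P_{\mu}$ preserves $E(0)^{\perp}$. Then $u_{\lambda}=W^{-1}s_{\lambda}$ is smooth and orthogonal to $\mathbf{1}$, and its Rayleigh quotient tends to that of $s$. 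Letting $\lambda\to\infty$ and then $\varepsilon\to0$ gives $\msc{sg}\leq\lambda_{0}(\rho_{o})$.

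The main obstacle is this regularity point: making sure the infimum defining $\lambda_{0}(\rho_{o})$ over $C^{\infty}(\mc{E}_{\rho_{o}})$ is actually attained in the limit by images of smooth functions on $M$. The spectral truncation above is what handles it. Once that is in place, the identity $\msc{sg}=\min\{\lambda_{1}(B),\lambda_{0}(\rho_{o})\}$ follows.
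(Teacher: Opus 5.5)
Your proof is correct and follows essentially the same route as the paper. You transport via $W$, split along $\mc{E}_{\rho}=\mc{E}_{\mathbf{1}}\oplus\mc{E}_{\rho_{o}}$ with vanishing cross terms for the lower bound, and use $\pi^{*}$ of a first eigenfunction of $\Delta_{B}$ plus the spectral truncation from Lemma \ref{UEL3} for the upper bound; your direct observation that the $\mc{E}_{\mathbf{1}}$-component is orthogonal to constants also makes the paper's case distinction on whether $\Delta_{\mathbf{1}}WPf=0$ unnecessary.
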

\begin{proof}
Let $g_{0} \in C^{\infty}(B)$ be an $L^{2}$-normalized eigenfunction of $\Delta_{B}$ 
with the eigenvalue $\lambda_{1}(B)$. Set $\dsp f_{0}=\frac{1}{\mr{vol}(F)^{1/2}} \pi^{*}g_{o}$. 
Then $f_{0} \in \mbf{1}^{\perp} \cap C^{\infty}(M)$ and 
satisfies $\|f_{0}\|_{L^{2}(M)}=\|g_{0}\|_{L^{2}(B)}=1$. 
Since $\Delta_{H} \pi^{*}g_{0} =\pi^{*} \Delta_{B}g_{0}$, we see 
\[
\begin{split}
\lambda_{1}(B) & = \ispa{\Delta_{B}g_{0},g_{0}}_{L^{2}(B)} 
 = \frac{1}{\mr{vol}(F)} \ispa{\pi^{*} \Delta_{B} g_{o}, \pi^{*} g_{o}}_{L^{2}(M)} \\
& =\frac{1}{\mr{vol}(F)} \ispa{\Delta_{H}\pi^{*}g_{0}, \pi^{*}g_{0}}_{L^{2}(M)} 
=\ispa{\Delta_{H} f_{0}, f_{0}}_{L^{2}(M)} 
 \geq \msc{sg}. 
\end{split}
\]
The image of subspace $\mathbf{1}^{\perp} \cap C^{\infty}(M)$ by $W$ is  
$(W \mathbf{1})^{\perp} \cap C^{\infty}_{\mr{s}}(\mc{E}_{\rho})$. 
Since $C^{\infty}(\mc{E}_{\rho_{o}})$ is orthogonal to $C^{\infty}(\mc{E}_{{\bf 1}})$, 
and since $W{\bf 1} \in C^{\infty}(\mc{E}_{{\bf 1}})$, 
$C^{\infty}_{\mr{s}} (\mc{E}_{\rho_{o}})$ is contained in 
$(W \mathbf{1})^{\perp} \cap C^{\infty}_{\mr{s}}(\mc{E}_{\rho})$. 
Take $s \in C^{\infty}(\mc{E}_{\rho_{o}})$. Then, the same argument as in the proof of 
Lemma $\ref{UEL3}$ shows that there exists a sequence 
$s_{N} \in C^{\infty}_{\mr{s}}(\mc{E}_{\rho_{o}})$ such that $\|s-s_{N}\|_{L^{2}(\mc{E}_{\rho_{o}})} \to 0$ and 
$\|\Delta_{\rho_{o}}s - \Delta_{\rho_{o}}s_{N}\|_{L^{2}(\mc{E}_{\rho_{o}})} \to 0$ as $N \to \infty$. 
Hence, in the definition $\eqref{BTM}$ of $\lambda_{0}(\rho_{o})$, we can replace $C^{\infty}(\mc{E}_{\rho_{o}})$ 
by $C^{\infty}_{\mr{s}}(\mc{E}_{\rho_{o}})$ and obtain 
\[
\begin{split}
\msc{sg} & =\inf_{0 \neq s \in C^{\infty}_{\mr{s}}(\mc{E}_{\rho}) \cap (W \mathbf{1})^{\perp}}
\frac{\ispa{\Delta_{\rho}s,s}_{L^{2}(\mc{E}_{\rho})}}
{\|s\|_{L^{2}(\mc{E}_{\rho})}^{2}} 
 \leq \inf_{0 \neq s \in C^{\infty}_{\mr{s}}(\mc{E}_{\rho_{o}})} 
\frac{\ispa{\Delta_{\rho_{o}} s, s}_{L^{2}(\mc{E}_{\rho_{o}})} }{\|s\|_{L^{2}(\mc{E}_{\rho_{o}})}^{2}} 
 =\lambda_{0}(\rho_{o}). 
\end{split}
\]
This shows $\msc{sg} \leq \min \{\lambda_{1}(B), \lambda_{0}(\rho_{o})\}$. 
Next, take an arbitrary function $f$ in $\mathbf{1}^{\perp} \cap C^{\infty}(M)$ with $\|f\|_{L^{2}(M)}=1$ and 
decompose it as $f=Pf+f_{o}$, where $P$ is the orthogonal projection 
onto the closure of $\pi^{*}C^{\infty}(B)=C^{\infty}(M,0)$ in $L^{2}(M)$. 
The projection $P$ is concretely written as 
$P=\pi^{*} \comp A$, where $A \colon L^{2}(M) \to L^{2}(B)$ is a bounded operator given by 
\[
Af(b)=\frac{1}{\mr{vol}(F)} \int_{F_{b}} f\,d\mu_{F_{b}},\quad b \in B. 
\]
Set $\phi=Wf$ and $\phi_{0}=Wf_{0}$ so that $\phi = WPf +\phi_{0}$. 
We know $\phi_{0} \in C^{\infty}_{\mr{s}}(\mc{E}_{\rho_{o}})$
and $W Pf \in C^{\infty}(\mc{E}_{{\bf 1}})$. 
Since $\Delta_{\rho}$ preserves $C^{\infty}(\mc{E}_{{\bf 1}})$ and $C^{\infty}(\mc{E}_{\rho_{o}})$, 
$\Delta_{\rho}\phi_{0}=\Delta_{\rho_{o}}\phi_{0}$ is orthogonal to $WPf$. 
First, assume that $\Delta_{\mbf{1}} WPf=0$. Then $\Delta_{H} Pf=0$ and hence $Af$ is constant on $B$. 
But since $f \in \mbf{1}^{\perp}$, we have $Af=0$. This shows $Pf=0$. Hence $f=f_{o} \in C^{\infty}(M,0)^{\perp}$ 
and $\phi=\phi_{o} \in C^{\infty}(\mc{E}_{\rho_{o}})$. 
Thus, 
\[
\ispa{\Delta_{\rho}\phi, \phi}_{L^{2}(\mc{E}_{\rho})} = 
\ispa{\Delta_{\rho_{o}}\phi_{0},\phi_{0}}_{L^{2}(\mc{E}_{o})} 
\geq \|\phi_{0}\|_{L^{2}(\mc{E}_{\rho_{o}})}^{2} 
\lambda_{0}(\rho_{o}) 
\geq \|\phi\|_{L^{2}(\mc{E}_{\rho})}^{2}  \min \{\lambda_{1}(B), \lambda_{0}(\rho_{o})\}. 
\]
When $\Delta_{\mbf{1}} WPf \neq 0$, we see 
\[
\begin{split}
\ispa{\Delta_{\rho}\phi, \phi}_{L^{2}(\mc{E}_{\rho})} 
& =\ispa{\Delta_{\mbf{1}}WPf, WPf}_{L^{2}(\mc{E}_{\mbf{1}})} + 
\ispa{\Delta_{\rho_{o}}\phi_{0},\phi_{0}}_{L^{2}(\mc{E}_{o})} \\
& \geq \|WPf\|_{L^{2}(\mc{E}_{\mbf{1}})}^{2} \lambda_{1}(B) 
+\|\phi_{0}\|_{L^{2}(\mc{E}_{\rho_{o}})}^{2} 
\lambda_{0}(\rho_{o}) \\
& \geq ( \|WPf\|_{L^{2}(\mc{E}_{\mbf{1}})}^{2} + \|\phi_{0}\|_{L^{2}(\mc{E}_{\rho_{o}})}^{2} ) 
\min \{\lambda_{1}(B), \lambda_{0}(\rho_{o})\} \\
& = \|\phi\|_{L^{2}(\mc{E}_{\rho})}^{2} \min \{\lambda_{1}(B), \lambda_{0}(\rho_{o})\}. 
\end{split}
\]
Since this holds for any $\phi \in (W\mathbf{1})^{\perp} \cap C^{\infty}_{\mr{s}} (\mc{E}_{\rho})$, 
we have $\msc{sg} \geq \min \{\lambda_{1}(B), \lambda_{0}(\rho_{o})\}$. 
This concludes the assertion. 
\end{proof}

\section{Spectral coincidence for $\Delta_{H}$} \label{TM2}
This final section is devoted to a proof of Theorem $\ref{BOT}$. 
To prove Theorem $\ref{BOT}$, we need to compare the Laplacian $\Delta_{B_{H}}$ on $B_{H}$ 
and the twisted Laplacian $\Delta_{\rho}$ acting on $C^{\infty}(\mc{E}_{\rho})$. 
In the following, the unit element in $\mr{Hol}(b_{o})$ is denoted by $e$. 
\subsection{Periodification}
For any $\psi \in C_{0}^{\infty}(B_{H})$ and $f \in L^{2}(F)$, define 
the map $T(\psi,f) \colon B_{H} \to L^{2}(F)$ by 
\[
T(\psi,f)(x)=\sum_{h \in \mr{Hol}(b_{o})} (h^{-1}\psi)(x) \rho(h)^{-1}f \quad (x \in B_{H}), 
\]
where, for $\psi \in C^{\infty}(B_{H})$, $h \in \mr{Hol}(b_{o})$ and $x \in B_{H}$, we set $(h^{-1}\psi)(x)=\psi(hx)$. 
Since $\mr{Hol}(b_{o})$ acts on $B_{H}$ properly discontinuously and since $\supp \psi$ is compact, 
the sum in the right-hand side is locally finite. 
Since $T(\psi,f)$ is $\rho$-equivariant, 
$T(\psi,f)$ defines a smooth section of $\mc{E}_{\rho}$, which we tentatively write $s(\psi,f)$. 
The $\rho$-equivariant map from $B_{H}$ to $L^{2}(F)$ corresponding to the section $\Delta_{\rho}s(\psi,f)$ 
is $\Delta_{B_{H}} T(\psi,f)$. Since the sum defining $T(\psi,f)$ is locally finite, we see that 
$\Delta_{B_{H}}T(\psi,f)=T(\Delta_{B_{H}}\psi, f)$. Thus, identifying $s(\psi,f)$ and $T(\psi,f)$, we can write as 
\begin{equation}\label{DBH}
\Delta_{\rho}T(\psi,f)=T(\Delta_{B_{H}}\psi, f). 
\end{equation}
\begin{lem}\label{IP1}
For any $\psi_{1},\psi_{2} \in C_{0}^{\infty}(B_{H})$ and $f_{1},f_{2} \in L^{2}(F)$, we have 
\begin{equation}\label{PS1}
\ispa{T(\psi_{1},f_{1}), T(\psi_{2},f_{2})}_{L^{2}(\mc{E}_{\rho})} =
\sum_{h \in \mr{Hol}(b_{o})} \ispa{\rho(h)f_{1},f_{2}}_{L^{2}(F)} 
\ispa{h\psi_{1}, \psi_{2}}_{L^{2}(B_{H})}. 
\end{equation}
\end{lem}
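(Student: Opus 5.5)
The plan is to compute the left-hand side of \eqref{PS1} using the norm formula \eqref{Norm1}, and then to unfold the integral over the fundamental domain $D$ into an integral over all of $B_{H}$. Write $\Gamma=\mr{Hol}(b_{o})$. By \eqref{Norm1} and polarization,
\[
\ispa{T(\psi_{1},f_{1}), T(\psi_{2},f_{2})}_{L^{2}(\mc{E}_{\rho})}
=\int_{D} \ispa{T(\psi_{1},f_{1})(x), T(\psi_{2},f_{2})(x)}_{L^{2}(F)}\,d\mu_{B_{H}}(x).
\]
The integrand is $\Gamma$-invariant, since both maps are $\rho$-equivariant and $\rho$ is unitary. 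On $D$ we expand only the second factor, which is a locally finite sum. Because $\supp\psi_{2}$ is compact, only finitely many $h$ contribute on $D$, so sum and integral can be exchanged. This gives
\[
\sum_{h\in\Gamma}\int_{D} \psi_{2}(hx)\,\ispa{T(\psi_{1},f_{1})(x), \rho(h)^{-1}f_{2}}_{L^{2}(F)}\,d\mu_{B_{H}}(x).
\]

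Next, use equivariance: $\rho(h)T(\psi_{1},f_{1})(x)=T(\psi_{1},f_{1})(hx)$. Substituting $y=hx$, which is legitimate because $h$ is an isometry of $B_{H}$, turns the $h$-term into $\int_{hD}\psi_{2}(y)\ispa{T(\psi_{1},f_{1})(y),f_{2}}\,d\mu(y)$. The translates $hD$ tile $B_{H}$ up to a null set, so the sum over $h$ unfolds to
\[
\int_{B_{H}} \psi_{2}(y)\,\ispa{T(\psi_{1},f_{1})(y), f_{2}}_{L^{2}(F)}\,d\mu_{B_{H}}(y).
\]
Here $\psi_{2}$ is real-valued, or conjugated if complex, matching the convention of the inner products.

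Now expand $T(\psi_{1},f_{1})(y)=\sum_{k}\psi_{1}(ky)\rho(k)^{-1}f_{1}$. On the compact set $\supp\psi_{2}$ only finitely many $k$ contribute, since the action is properly discontinuous. This yields
\[
\sum_{k\in\Gamma}\ispa{\rho(k)^{-1}f_{1},f_{2}}_{L^{2}(F)}\int_{B_{H}}\psi_{1}(ky)\ol{\psi_{2}(y)}\,d\mu.
\]
Setting $h=k^{-1}$ and recalling $(h\psi_{1})(y)=\psi_{1}(h^{-1}y)$, the integral becomes $\ispa{h\psi_{1},\psi_{2}}_{L^{2}(B_{H})}$ and the coefficient becomes $\ispa{\rho(h)f_{1},f_{2}}$, which is \eqref{PS1}.

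The main obstacle is justifying the interchanges of sums and integrals and the unfolding step. I would handle this through local finiteness: $\supp\psi_{j}$ meets only finitely many translates $hD$ of $\ol{D}$, and $\partial D$ has measure zero. With these two facts, every sum above is finite on the relevant compact sets. The only remaining care is tracking the inverse conventions so that $\rho(h)$ is paired with $h\psi_{1}$ rather than with $h^{-1}\psi_{1}$.
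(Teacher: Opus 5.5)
Your proposal is correct and follows essentially the same route as the paper: write the pairing as an integral over the fundamental domain $D$, change variables by the isometric deck transformations, and unfold $\bigcup_{h} hD$ into $B_{H}$. The only difference is bookkeeping. You expand one factor and use the $\rho$-equivariance $T(\psi_{1},f_{1})(hx)=\rho(h)T(\psi_{1},f_{1})(x)$ before unfolding, whereas the paper expands both sums into a double sum over $(h,h')$ and then reindexes with $g=h'h^{-1}$.
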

\begin{proof}
Let $D \subset B_{H}$ be a fundamental domain of $\varpi_{H} \colon B_{H} \to B$. 
By definition, we have 
\[
\ispa{T(\psi_{1},f_{1}), T(\psi_{2},f_{2})}_{L^{2}(\mc{E}_{\rho})} =
\int_{D} \ispa{T(\psi_{1},f_{1})(x), T(\psi_{2},f_{2})(x)}_{L^{2}(F)} \,d\mu_{H}(x)
\]
The integrand of the integral in the right-hand side is computed as
\[
\ispa{T(\psi_{1},f_{1})(x), T(\psi_{2},f_{2})(x)}_{L^{2}(F)}=
\sum_{h,h' \in \mr{Hol}(b_{o})} \psi_{1}(hx) \ol{\psi_{2}(h'x)} \ispa{\rho(h)^{-1}f_{1}, \rho(h')^{-1}f_{2}}_{L^{2}(F)}. 
\]
Using the fact that the covering transformation group $\mr{Hol}(b_{o})$ acts isometrically on $B_{H}$, 
we compute the inner product $\ispa{T(\psi_{1},f_{1}), T(\psi_{2},f_{2})}_{L^{2}(\mc{E}_{\rho})}$ further as
\[
\begin{split}
\ispa{T(\psi_{1},f_{1}), T(\psi_{2},f_{2})}_{L^{2}(\mc{E}_{\rho})} 
& = \sum_{h,h'} \ispa{\rho(h'h^{-1})f_{1},f_{2}}_{L^{2}(F)} \int_{D} \psi_{1}(hx) \ol{\psi_{2}(h'x)} \,d\mu_{B_{H}}(x) \\
& = \sum_{h,h'} \ispa{\rho(h'h^{-1})f_{1},f_{2}}_{L^{2}(F)} \int_{h'D} \psi_{1}(hh'^{-1}y) \ol{\psi_{2}(y)} \,d\mu_{B_{H}}(y) \\
& = \sum_{g,h'} \ispa{\rho(g)f_{1},f_{2}}_{L^{2}(F)} \int_{h'D} \psi_{1}(g^{-1}y) \ol{\psi_{2}(y)} \,d\mu_{B_{H}}(y) \\
& = \sum_{g} \ispa{\rho(g)f_{1},f_{2}}_{L^{2}(F)}  \int_{\bigcup_{h' \in \mr{Hol}(b_{o})} h'D} 
\psi_{1}(g^{-1}y) \ol{\psi_{2}(y)} \,d\mu_{B_{H}}(y) \\
& = \sum_{g} \ispa{\rho(g)f_{1},f_{2}}_{L^{2}(F)}  \int_{B_{H}} 
\psi_{1}(g^{-1}y) \ol{\psi_{2}(y)} \,d\mu_{B_{H}}(y), \\
\end{split}
\]
which shows the lemma. 
\end{proof}
The next lemma will be used later to handle the terms 
$\ispa{\rho(g)f,f}_{L^{2}(F)}$ with $g \neq e$ in the sum $\eqref{PS1}$ 
to compute the norm-square $\|T(\psi,f)\|_{L^{2}(\mc{E}_{\rho})}^{2}$. 
\begin{lem}\label{IP2}
For any finite set $Q \subset \mr{Isom}(F)$, with $|Q| \geq 1$ and $e \not\in Q$, 
there exists a non-zero function $f_{Q} \in C^{\infty}(F)$ such that 
\begin{equation}\label{OT1}
\ispa{\rho(g)f_{Q},f_{Q}}_{L^{2}(F)}=0 \quad (g \in Q). 
\end{equation}
\end{lem}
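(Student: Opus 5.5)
The plan is to take $f_{Q}$ to be a bump function supported in a small ball that every $g \in Q$ moves off itself. Then $\rho(g)f_{Q}$ and $f_{Q}$ have disjoint supports, and the inner product vanishes trivially.

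\emph{Step 1: a point moved by every $g \in Q$.} For each $g \in Q$, let $\mr{Fix}(g)=\{p \in F \mid gp=p\}$; it is closed. I claim it has empty interior. Suppose $g$ fixes a nonempty open set $U$, and let $V$ be the interior of $\mr{Fix}(g)$. If $q$ lies in the closure of $V$, then $gq=q$ by continuity. Moreover $dg_{q}=\mr{id}$, since $dg=\mr{id}$ on $V$ and $dg$ is continuous. An isometry of a connected Riemannian manifold is determined by its value and differential at one point, because it commutes with the exponential map. Hence $g$ is the identity on a normal neighborhood of $q$, so $q \in V$. Thus $V$ is open and closed and nonempty, and since $F$ is connected, $V=F$, i.e.\ $g=e$. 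This contradicts $e \notin Q$. So each $F\setminus \mr{Fix}(g)$ is open and dense. Because $Q$ is finite, $\bigcap_{g\in Q}(F\setminus\mr{Fix}(g))$ is again open and dense, in particular nonempty. Pick $p_{0}$ in it, so that $gp_{0}\neq p_{0}$ for all $g \in Q$.

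\emph{Step 2: the ball.} Set $r=\frac{1}{3}\min_{g\in Q} d_{F}(p_{0},gp_{0})>0$, which is positive since $Q$ is finite. Let $\mc{B}=B(p_{0},r)$. Since $g$ is an isometry, $g\mc{B}=B(gp_{0},r)$. By the triangle inequality this ball is disjoint from $\mc{B}$ for every $g \in Q$.

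\emph{Step 3: the function.} Choose $f_{Q}\in C^{\infty}(F)$ with $f_{Q}\ge 0$, $f_{Q}\not\equiv 0$ and $\supp f_{Q}\subset \mc{B}$; a standard bump function in a chart works. Then $\rho(g)f_{Q}=f_{Q}\comp g^{-1}$ is supported in $g\mc{B}$. This is disjoint from $\supp f_{Q}$, so
\[
\ispa{\rho(g)f_{Q},f_{Q}}_{L^{2}(F)}=\int_{F} f_{Q}(g^{-1}q)\,\ol{f_{Q}(q)}\,d\mu_{F}(q)=0
\quad (g\in Q),
\]
which is \eqref{OT1}.

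The only step that needs real justification is Step 1. It must exclude that some nontrivial $g\in Q$ fixes an open set, or that the finitely many fixed-point sets together cover $F$. The rigidity of isometries on a connected manifold together with the Baire/density argument handles both. The remaining steps are routine.
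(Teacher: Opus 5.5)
Your proof is correct and takes essentially the same approach as the paper. Both arguments pick a point $p_0$ moved by every $g\in Q$, shrink to a neighborhood $U$ of $p_0$ with $g\overline{U}\cap\overline{U}=\emptyset$ for all $g\in Q$, and take a bump function supported in $U$. The differences are minor: you prove directly, via rigidity of isometries on the connected $F$, that each $\mathrm{Fix}(g)$ has empty interior, where the paper cites the fact that $\mathrm{Fix}(g)$ is a finite union of closed submanifolds; and you use the explicit radius $r=\tfrac13\min_{g\in Q} d_F(p_0,gp_0)$ where the paper intersects neighborhoods $h^{-1}U_h\cap V_h$.
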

\begin{proof}
Let $g \in \mr{Isom}(F)$, $g \neq e$. Then the set of fixed points 
\[
\mr{Fix}(g)=\{p \in F \mid g(p)=p\}
\]
is a finite union of closed submanifolds (\cite{KB}). 
Then, for any finite set $Q \subset \mr{Isom}(F)$ not containing $e$, 
the subset 
\[
G=F \setminus \bigcup_{g \in Q} \mr{Fix}(g)
\]
is a non-empty open set. Take $x_{o} \in G$. 
Let $g \in Q$. Since $gx_{o} \neq x_{o}$, 
we can take a neighborhood $U_{g}$ of $gx_{o}$ and a neighborhood $V_{g}$ of $x_{o}$ 
such that $\ol{U_{g}} \cap \ol{V_{g}}=\emptyset$. Since $x_{o} \in g^{-1}U_{g}$, the intersection 
$\dsp U=\bigcap_{h \in Q} (h^{-1}U_{h} \cap V_{h})$ 
is an open set containing $x_{o}$ satisfying $g\ol{U} \cap \ol{U}=\emptyset$ for any $g \in Q$. 
We take $f_{Q} \in C^{\infty}(F) \setminus \{0\}$ whose support is contained in $U$. 
If $g \in Q$, the support of $\rho(g)f_{Q}$ is contained in $gU$, and thus 
$\supp(\rho(g)f_{Q}) \cap \supp(f_{Q})=\emptyset$. Therefore, $\eqref{OT1}$ holds for this function $f_{Q}$. 
\end{proof}
The following is a first half of the assertions in Theorem $\ref{BOT}$. 
\begin{prop}\label{CCP}
We have $\spec(\Delta_{B_{H}}) \subset \spec(\Delta_{H})$. 
\end{prop}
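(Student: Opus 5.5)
By Theorem \ref{MN1}, $\Delta_{H}$ is unitarily equivalent to $\Delta_{\rho}$. It therefore suffices to show $\spec(\Delta_{B_{H}}) \subset \spec(\Delta_{\rho})$. We argue through Weyl's criterion. Let $\lambda \in \spec(\Delta_{B_{H}})$. Since $\Delta_{B_{H}}$ is essentially self-adjoint on $C_{0}^{\infty}(B_{H})$ ($B_{H}$ is complete), there are $\psi_{k} \in C_{0}^{\infty}(B_{H})$ with $\|\psi_{k}\|_{L^{2}(B_{H})}=1$ and $\|(\Delta_{B_{H}}-\lambda)\psi_{k}\|_{L^{2}(B_{H})} \to 0$. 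We then transfer these to sections of $\mc{E}_{\rho}$ via the periodification $T(\psi_{k},f_{k})$, for suitable $f_{k} \in C^{\infty}(F)$. By \eqref{DBH},
\[
(\Delta_{\rho}-\lambda)T(\psi_{k},f_{k}) = T((\Delta_{B_{H}}-\lambda)\psi_{k}, f_{k}).
\]
So it suffices to choose $f_{k}$ such that $\|T(\psi_{k},f_{k})\|$ stays bounded below and $\|T(\phi,f_{k})\|$ is comparable to $\|\phi\|\,\|f_{k}\|$, at least for $\phi=(\Delta_{B_{H}}-\lambda)\psi_{k}$.

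The key computation is Lemma \ref{IP1}:
\[
\|T(\phi,f)\|^{2}_{L^{2}(\mc{E}_{\rho})}=\sum_{h} \ispa{\rho(h)f,f}_{L^{2}(F)} \ispa{h\phi,\phi}_{L^{2}(B_{H})}.
\]
The cross terms $h\neq e$ are the obstacle. Since $\psi_{k}$ has compact support and the action is properly discontinuous, the set
\[
Q_{k}=\{h\neq e \mid h\,\supp\psi_{k}\cap \supp\psi_{k}\neq\emptyset\}
\]
is finite. Only $h \in Q_{k}\cup\{e\}$ contribute when $\phi$ is $\psi_{k}$ or $(\Delta_{B_{H}}-\lambda)\psi_{k}$, because differentiation does not enlarge supports. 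Apply Lemma \ref{IP2} to $Q_{k}$ (if $Q_{k}=\emptyset$, any nonzero $f$ works) to obtain $f_{k}=f_{Q_{k}}$ with $\ispa{\rho(h)f_{k},f_{k}}=0$ for $h\in Q_{k}$, and normalize $\|f_{k}\|_{L^{2}(F)}=1$. All cross terms then vanish, and
\[
\|T(\psi_{k},f_{k})\|=\|\psi_{k}\|=1,\qquad \|(\Delta_{\rho}-\lambda)T(\psi_{k},f_{k})\|=\|(\Delta_{B_{H}}-\lambda)\psi_{k}\|\to0.
\]
One subtlety is that Lemma \ref{IP2} needs $Q_{k}\subset \mr{Isom}(F)$ with $e\notin Q_{k}$. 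Here $\mr{Hol}(b_{o})\subset \mr{Isom}(F)$ acts through $\iota_{H}$, and a nontrivial group element is a nontrivial isometry, so this holds.

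Finally, $T(\psi_{k},f_{k})$ is a smooth section of $\mc{E}_{\rho}$. It lies in the domain of $\ol{\Delta_{\rho}}$, where \eqref{DBH} holds, and it is even in $C^{\infty}_{\mr{s}}$ since $f_{k}$ is smooth. Weyl's criterion then gives $\lambda\in\spec(\ol{\Delta_{\rho}})=\spec(\Delta_{H})$. The step requiring most care is the bookkeeping that the same finite set $Q_{k}$ controls the cross terms for both $\psi_{k}$ and $(\Delta_{B_{H}}-\lambda)\psi_{k}$. This holds because both $\ispa{h\phi,\phi}$ pairings vanish unless $h\,\supp\psi_{k}$ meets $\supp\psi_{k}$.
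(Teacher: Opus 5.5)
Your proposal is correct and is essentially the paper's proof. The paper also periodifies a compactly supported Weyl sequence into $T(\psi_k,f_k)$, takes $f_k$ from Lemma~\ref{IP2}, and uses Lemma~\ref{IP1} together with $\supp((\Delta_{B_H}-\lambda)\psi_k)\subset\supp\psi_k$ to kill every cross term in both norms. Applying Lemma~\ref{IP2} to $Q_k=P_k\setminus\{e\}$, and treating $Q_k=\emptyset$ separately, is slightly more careful than the paper, which applies the lemma to $P_k\ni e$ even though the lemma requires $e\notin Q$.
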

\begin{proof}
Let $\lambda \in \spec(\Delta_{B_{H}})$. Take a sequence $\{\varphi_{k}\} \subset C_{0}^{\infty}(B_{H})$ 
such that $\|\varphi_{k}\|_{L^{2}(B_{H})}=1$ and 
$\|(\Delta_{B_{H}} -\lambda)\varphi_{k}\|_{L^{2}(B_{H})} \to 0$ as $k \to \infty$. 
Let $P_{k}$ be the subset in $\mr{Hol}(b_{o})$ defined by 
\[
P_{k}=\{g \in \mr{Hol}(b_{o}) \mid g\,\supp \varphi_{k} \cap \supp \varphi_{k} \neq \emptyset\}. 
\]
Since $\supp \varphi_{k}$ is compact, $P_{k}$ is a finite subset of $\mr{Hol}(b_{o})$. 
We take a function $f_{k}=f_{P_{k}} \in C^{\infty}(F)$ as in Lemma $\ref{IP2}$, and set 
$\phi_{k}=T(\varphi_{k},f_{k}) \in C^{\infty}(\mc{E}_{\rho})$. 
By Lemma $\ref{IP1}$, we have 
\[
\|\phi_{k}\|_{L^{2}(\mc{E}_{\rho})}^{2} = 
\sum_{h \in \mr{Hol}(b_{o})} \ispa{\rho(h)f_{k}, f_{k}} \ispa{h \varphi_{k}, \varphi_{k}}_{L^{2}(B_{H})}. 
\]
Let $h \in \mr{Hol}(b_{o})$. 
If a point $x \in B_{H}$ satisfies $\varphi_{k}(h^{-1}x) \neq 0$ and $\varphi_{k}(x) \neq 0$, 
then $x$ must be contained in $h \,\supp(\varphi_{k}) \cap \supp (\varphi_{k})$ and hence $h \in P_{k}$. 
Hence, 
\[
\begin{split}
\|\phi_{k}\|_{L^{2}(\mc{E}_{\rho})}^{2} 
& = \sum_{h \in P_{k}} \ispa{\rho(h)f_{k}, f_{k}}_{L^{2}(F)} 
\int_{B_{H}} \varphi_{k}(h^{-1}x) \ol{\varphi_{k}(x)} \,d\mu_{H}(x) \\
& = \|f_{k}\|^{2}_{L^{2}(F)} \|\varphi_{k}\|_{L^{2}(B_{H})}^{2} =\|f_{k}\|_{L^{2}(F)}^{2}. 
\end{split}
\]
By $\eqref{DBH}$, we have $(\Delta_{\rho}-\lambda)\phi_{k}=T((\Delta_{B_{H}} -\lambda)\varphi_{k},f_{k})$. 
Since the support of $(\Delta_{B_{H}} -\lambda)\varphi_{k}$ is contained in that of $\varphi_{k}$, 
the same computation as above works for $\|(\Delta_{\rho} -\lambda) \phi_{k}\|_{L^{2}(\mc{E}_{\rho})}^{2} $. 
Hence, 
\[
\|(\Delta_{\rho} -\lambda) \phi_{k}\|_{L^{2}(\mc{E}_{\rho})}^{2} 
=  \|(\Delta_{B_{H}} -\lambda)\varphi_{k}\|_{L^{2}(B_{H})}^{2} \|f_{k}\|_{L^{2}(F)}^{2}. 
\]
Therefore, we obtain 
\[
\frac{\|(\Delta_{\rho} -\lambda) \phi_{k}\|_{L^{2}(\mc{E}_{\rho})}^{2}}
{\|\phi_{k}\|_{L^{2}(\mc{E}_{\rho})}^{2}}
=\|(\Delta_{B_{H}} -\lambda)\varphi_{k}\|_{L^{2}(B_{H})}^{2} \to 0 \ \ (k \to \infty), 
\]
which concludes $\lambda \in \spec(\Delta_{\rho})=\spec(\Delta_{H})$. 
\end{proof}
\subsection{Construction of a Weyl sequence}
The main assertion in Theorem $\ref{BOT}$ is the following. 
\begin{prop}\label{ESSP}
Suppose that $\mr{Hol}(b_{o})$ is infinite. 
Then we have $\spec_{\mr{ess}}(\Delta_{B_{H}}) \subset \spec_{\mr{ess}}(\Delta_{\rho})$. 
\end{prop}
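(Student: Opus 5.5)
We will use the periodification $T(\psi,f)$ of the previous subsection together with Weyl's criterion in its singular-sequence form. For $\lambda\in\spec_{\mr{ess}}(\Delta_{B_{H}})$ we want $\phi_{k}\in C^{\infty}(\mc{E}_{\rho})$ such that $\|\phi_{k}\|=1$, $\phi_{k}\to 0$ weakly (or at least the $\phi_k$ are orthonormal), and $\|(\Delta_{\rho}-\lambda)\phi_{k}\|\to 0$.

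First, since $\mr{Hol}(b_{o})$ is infinite, $\spec(\Delta_{B_{H}})=\spec_{\mr{ess}}(\Delta_{B_{H}})$ by Polymerakis \cite{Pol}, as mentioned in the Introduction. For our construction, however, we only need a Weyl sequence $\{\varphi_{k}\}\subset C_{0}^{\infty}(B_{H})$ with $\|\varphi_{k}\|=1$ and $\|(\Delta_{B_{H}}-\lambda)\varphi_{k}\|\to 0$. Such a sequence exists for every $\lambda\in\spec(\Delta_{B_{H}})$ by essential self-adjointness on $C_{0}^{\infty}(B_{H})$.

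The key step is to arrange that the corresponding periodified sections are mutually orthogonal. The natural idea is to decouple them through the fiber factor rather than through the base. As in Proposition \ref{CCP}, let $P_{k}$ be the finite set of $g$ with $g\,\supp\varphi_{k}\cap\supp\varphi_{k}\neq\emptyset$. We choose functions $f_{k}\in C^{\infty}(F)$ inductively, with pairwise disjoint supports, such that the sets $\ol{P_{k}U_{k}}$ are pairwise disjoint and $g\ol{U_{k}}\cap\ol{U_{k}}=\emptyset$ for $g\in P_{k}\setminus\{e\}$, where $U_{k}\supset\supp f_{k}$. Two points need care here: the identity $e$ lies in $P_{k}$ and is excluded when applying Lemma \ref{IP2}, and the supports of the $f_{k}$ are taken inside ever smaller balls around distinct points.

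Cross inner products are harder to kill this way, because $\ispa{T(\varphi_{k},f_{k}),T(\varphi_{l},f_{l})}$ involves $\ispa{\rho(h)f_{k},f_{l}}$ for all $h$ with $h\,\supp\varphi_{k}\cap\supp\varphi_{l}\neq\emptyset$. A more robust alternative is therefore to avoid orthogonality altogether and use only weak convergence. Set $\phi_{k}=T(\varphi_{k},f_{k})/\|f_{k}\|$, with $f_{k}$ chosen as in Lemma \ref{IP2} relative to $P_{k}$ and $\|f_k\|=1$. The computation of Proposition \ref{CCP} gives $\|\phi_{k}\|=1$ and $\|(\Delta_{\rho}-\lambda)\phi_{k}\|\to0$. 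It then suffices to show $\phi_{k}\rightharpoonup0$, that is, $\ispa{\phi_{k},\xi}\to 0$ for $\xi$ in a dense set. We take $\xi=T(\psi,f)$ with $\psi\in C_{0}^{\infty}(B_{H})$ and $f\in C^{\infty}(F)$; the span of these is dense, by a partition-of-unity argument on a fundamental domain combined with the density of $C^\infty(F)$.

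By Lemma \ref{IP1}, $\ispa{\phi_{k},\xi}=\sum_{h}\ispa{\rho(h)f_{k},f}\ispa{h\varphi_{k},\psi}$. To make this small, we further require $\supp f_{k}$ to be contained in a ball of radius $r_{k}\to0$. Then $|\ispa{\rho(h)f_{k},f}|\le\|f\|_{L^{2}(h\supp f_k)}$, which is of order $r_{k}^{\dim F/2}\|f\|_{\infty}$ uniformly in $h$, since $h$ is an isometry. It remains to control $\sum_h|\ispa{h\varphi_k,\psi}|$. We will handle this by choosing the $\varphi_{k}$ with uniformly controlled support, translating by group elements if needed. If $\lambda$ admits a Weyl sequence with supports in a fixed compact set, then only finitely many $h$ (a number independent of $k$) contribute, each term is bounded by $\|\psi\|$, and so $\ispa{\phi_k,\xi}\to0$. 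If the supports spread out, we instead use $\sum_h|\ispa{h\varphi_k,\psi}|\le \|\psi\|_{L^1}\,\sup_x\sum_h|\varphi_k(h^{-1}x)|$ together with Cauchy--Schwarz over a fundamental domain, which bounds the sum by $C_\psi\,\|\varphi_k\|_{L^2}$ times the square root of the number of translates of $\supp\psi$ meeting $\supp\varphi_k$. We then choose $r_k$ small enough to beat this factor, which depends only on $\varphi_k$ and is therefore known before $f_k$ is picked.

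The main obstacle is this last weak-convergence estimate. The plan is to let $r_{k}$ depend on $\varphi_{k}$, so that the smallness of $\|f\|_{L^2(\text{small ball})}$ absorbs the growth coming from the support of $\varphi_k$. The delicate point is that this bound must hold uniformly over a dense set of test vectors $\xi$, and in particular the constant $C_\psi$ must not depend on $k$.
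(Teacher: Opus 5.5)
Your final argument, the weak-convergence version, is correct. It takes a genuinely different route from the paper's.

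The paper uses the \emph{orthonormal} form of Weyl's criterion. It starts from an orthonormal Weyl sequence $\{\psi_k\}$ for $\lambda\in\spec_{\mathrm{ess}}(\Delta_{B_H})$. By induction it builds nested geodesic balls $X_k\supset\overline{X_k^{+}}\cup\overline{X_k^{-}}$ satisfying conditions (1)--(4) of Lemma~\ref{TNe}, and mean-zero step functions $f_k$ for which every cross term $\langle\rho(g)f_k,f_i\rangle$ ($g\in P(k)$, $i<k$) vanishes. Hence $\{T(\psi_k,f_k)\}$ is itself orthonormal.

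You use Lemma~\ref{IP2} only on the diagonal, exactly as in Proposition~\ref{CCP}. You get the singular behaviour by shrinking the fiber supports instead: with $\operatorname{supp} f_k\subset B(x_o,r_k)$ and $\|f_k\|=1$,
\[
|\langle\phi_k,T(\psi,f)\rangle|\le\|f\|_\infty\,\mathrm{vol}(B(x_o,r_k))^{1/2}\,\|\psi\|_{L^1}\,S_k,
\qquad S_k=\sup_x\sum_h|\varphi_k(h^{-1}x)|.
\]

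What your route buys: there is no nested-ball combinatorics, and the base sequence need not be orthonormal. So your argument actually proves $\spec(\Delta_{B_H})\subset\spec_{\mathrm{ess}}(\Delta_\rho)$ for every holonomy group, finite or infinite, without Polymerakis. For finite holonomy this just says eigenvalues of $\Delta_{B_H}$ become infinite-multiplicity eigenvalues of $\Delta_\rho$. The paper's construction yields an explicit orthonormal Weyl system, which is more rigid information. The cost is the delicate induction and the need for $\lambda\in\spec_{\mathrm{ess}}(\Delta_{B_H})$ from the outset.

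Your last paragraph makes the estimate look harder than it is, and one claim there is off. The number of translates of $\operatorname{supp}\psi$ meeting $\operatorname{supp}\varphi_k$ depends on $\psi$, not only on $\varphi_k$. You do not need that refinement:
\begin{itemize}
\item Your bound $\sum_h|\langle h\varphi_k,\psi\rangle|\le\|\psi\|_{L^1}S_k$ already separates the dependence on $\psi$ and on $\varphi_k$.
\item $S_k<\infty$, because $x\mapsto\sum_h|\varphi_k(h^{-1}x)|$ is a continuous $\mathrm{Hol}(b_o)$-invariant function and so descends to the compact $B$.
\item Choose $r_k$ with $\mathrm{vol}(B(x_o,r_k))^{1/2}S_k\le 1/k$. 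This is possible because the open set $U$ in the proof of Lemma~\ref{IP2} can be shrunk. Then $\langle\phi_k,\xi\rangle\to 0$ for each fixed $\xi=T(\psi,f)$.
\item No uniformity in $\xi$ is needed. Since $\|\phi_k\|=1$, convergence against a set with dense span suffices, and these $\xi$ do span a dense subspace: for $\psi\in C_0^\infty(D)$ one has $T(\psi,f)|_D=\psi f$.
\end{itemize}
The case distinction on supports and the abandoned disjoint-support attempt can simply be dropped.
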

Take $\lambda \in \spec_{\mr{ess}}(\Delta_{B_{H}})$. 
Then we can take a sequence $\dsp \{\psi_{k}\}_{k=1}^{\infty}$ in $C_{0}^{\infty}(B_{H})$ satisfying 
\[
\ispa{\psi_{k}, \psi_{l}}_{L^{2}(B_{H})} =\delta_{k,l},\quad 
\|(\Delta_{B_{H}} -\lambda)\psi_{k}\|_{L^{2}(B_{H})} \to 0 \ \ (k \to \infty). 
\]
Thus, in what follows, we fix such a sequence $\{\psi_{k}\}$ in $C_{0}^{\infty}(B_{H})$. 
We want to construct a sequence of $L^{2}$-functions $\{f_{k}\}$ on $F$ such that 
the sequence of smooth sections $\{T(\psi_{k},f_{k})\}$ of $\mc{E}_{\rho}$ is a Weyl sequence, 
namely, it satisfies 
\begin{equation}\label{WL}
\ispa{T(\psi_{k}, f_{k}), T(\psi_{l},f_{l})}_{L^{2}(\mc{E}_{\rho})} =\delta_{k,l},\quad 
\|(\Delta_{\rho}-\lambda) T(\psi_{k},f_{k})\|_{L^{2}(\mc{E}_{\rho})} \to 0 \ \ (k \to \infty). 
\end{equation}
\par
\vspace{5pt}
\begin{rem}
Let us give a remark before describing a construction of such a $\{f_{k}\} \subset L^{2}(F)$. 
Namely, it could work to take certain sequence of eigenfunctions of $\Delta_{F}$ as $\{f_{k}\}$. 
To explain this, let $\mu,\nu$ be eigenvalues of $\Delta_{F}$ with $\mu \neq \nu$. 
Take $\varphi,\psi \in C_{0}^{\infty}(B_{H})$, $f \in E(\mu)$ and $g \in E(\nu)$. 
Then, since $E(\mu)$ is an invariant subspace of the representation $\rho$ of $\mr{Hol}(b_{o})$, 
$\ispa{\rho(h)f,g}_{L^{2}(F)}=0$ for any $h \in \mr{Hol}(b_{o})$. 
Thus according to Lemma $\ref{IP1}$, we have 
\[
\ispa{T(\varphi,f), T(\psi,g)}_{L^{2}(\mc{E}_{\rho})}=0.
\] 
Hence, suitably chosen sequence of eigenfunctions $\{f_{k}\}$ could give a 
Weyl sequence $\{T(\psi_{k},f_{k})\}$ for $\Delta_{\rho}$ if it satisfies $T(\psi_{k},f_{k}) \neq 0$ 
for infinitely many $k$. The equation $T(\psi_{k},f_{k})=0$ is equivalent to 
\[
\sum_{h \in \mr{Hol}(b_{o})} \psi_{k}(hx) \rho(h)^{-1}f =0
\]
in $L^{2}(F)$ for all $x \in B_{H}$. 
Hence it could be avoided by modifying the function $\psi_{k}$ keeping the property 
that $\{\psi_{k}\}$ is a Weyl sequence of $\Delta_{B_{H}}$. 
However it would need further discussions, and thus, 
instead of taking eigenfunctions as $\{f_{k}\}$, we construct, in the following, a suitable $\{f_{k}\}$ 
by a variant of the method used in the proof of Lemma $\ref{IP2}$. \hfill$\blacksquare$
\end{rem}
\par
\vspace{10pt}
For positive integers $k,l$, we set 
\[
P_{k,l}=\{g \in \mr{Hol}(b_{o}) \mid g \,\supp \psi_{k} \cap \supp \psi_{l} \neq \emptyset\}. 
\]
$P_{k,l}$ is a finite set, and we have $P_{l,k}=P_{k,l}^{-1}$, 
where, for any subset $P$ in $\mr{Hol}(b_{o})$, we write $P^{-1}=\{g^{-1} \mid g \in P\}$. 
For each positive integer $k$, define 
\[
P(k)=\bigcup_{l=1}^{k} \left(P_{k,l} \cup P_{k,l}^{-1}\right). 
\]
Note that $e \in P(k)$, and $g \in P(k)$ if and only if $g^{-1} \in P(k)$. 
\begin{lem}\label{TNe}
Suppose that we have sequences of open geodesic balls $\{X_{k}\}_{k=1}^{\infty}$, $\{X_{k}^{\pm}\}_{k=1}^{\infty}$ 
with radius less than the injectivity radius of $F$ such that 
\[
X_{1}^{+}=X_{1}, \quad X_{1}^{-}=\emptyset, \quad g \ol{X_{1}} \cap \ol{X_{1}}=\emptyset \ \  (g \in P(1) \setminus \{e\}), 
\]
and that, for $k \geq 2$, 
\begin{enumerate}
\item $\ol{X_{k}^{+}} \cap \ol{X_{k}^{-}}=\emptyset$ and $0<\mr{vol}(X_{k}^{+})=\mr{vol}(X_{k}^{-});$
\item $\ol{X_{k}^{+}} \cup \ol{X_{k}^{-}} \subset X_{k} \subset \ol{X_{k}} \subset X_{k-1}^{+};$
\item For any $g \in P(k) \setminus \{e\}$, we have $g \ol{X}_{k} \cap \ol{X}_{k}=\emptyset ;$ 
\item For any $g \in P(k)$, we have 
\[
g \ol{X_{k}} \cap 
\bigcup_{j=1}^{k-1} \left(\partial X_{j}^{+} \cup \partial X_{j}^{-} \right)
 =\emptyset. 
\]
\end{enumerate}
We define
\[
f_{1}=\mr{vol}(X_{1})^{-1/2}\mbf{1}_{X_{1}},\quad 
f_{k}=\left(2\mr{vol}(X_{k}^{+})\right)^{-1/2} \left(\mbf{1}_{X_{k}^{+}} - \mbf{1}_{X_{k}^{-}} \right) 
\ \ (k \geq 2), 
\] 
where $\mbf{1}_{A}$ denotes the characteristic function of a subset $A$ in $F$. 
Then, the sequence of smooth sections 
$\{T(\psi_{k},f_{k})\}_{k=1}^{\infty}$ of $\mc{E}_{\rho}$ 
is an orthonormal system in $L^{2}(\mc{E}_{\rho})$. 
\end{lem}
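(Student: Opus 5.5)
The plan is to compute every inner product $\ispa{T(\psi_{k},f_{k}), T(\psi_{l},f_{l})}_{L^{2}(\mc{E}_{\rho})}$ with Lemma \ref{IP1}:
\[
\ispa{T(\psi_{k},f_{k}), T(\psi_{l},f_{l})}_{L^{2}(\mc{E}_{\rho})}=\sum_{h}\ispa{\rho(h)f_{k},f_{l}}_{L^{2}(F)}\ispa{h\psi_{k},\psi_{l}}_{L^{2}(B_{H})}.
\]
The factor $\ispa{h\psi_{k},\psi_{l}}$ vanishes unless $h\,\supp\psi_{k}\cap\supp\psi_{l}\neq\emptyset$, that is, unless $h\in P_{k,l}$. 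It is also supported in $P_{k,l}^{-1}$, depending on the convention $(h\psi)(x)=\psi(h^{-1}x)$. In either case $h\in P(\max(k,l))$, since $P_{l,k}=P_{k,l}^{-1}$. The whole proof therefore reduces to controlling $\ispa{\rho(h)f_{k},f_{l}}=\int_{F} f_{k}(h^{-1}p)f_{l}(p)\,d\mu_{F}$ for $h$ in these finite sets.

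\emph{Diagonal case $k=l$.} Take $h\in P(k)\setminus\{e\}$. The support of $\rho(h)f_{k}$ lies in $h\ol{X_{k}}$ (recall $X_{k}^{\pm}\subset X_{k}$, and $X_{1}^{+}=X_{1}$). By condition (3), or its $k=1$ analogue, $h\ol{X_{k}}\cap\ol{X_{k}}=\emptyset$, so these terms vanish. Only $h=e$ survives, and it gives $\|f_{k}\|^{2}\|\psi_{k}\|^{2}=1$. For $k\geq 2$ this uses condition (1): $X_{k}^{+}$ and $X_{k}^{-}$ are disjoint and have equal volume, so $\|f_{k}\|^{2}=(2\mr{vol}(X_{k}^{+}))^{-1}\cdot 2\mr{vol}(X_{k}^{+})=1$.

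\emph{Off-diagonal case.} Assume $l<k$ and take $h\in P(k)$. We must show $\int_{h X_{k}^{+}} f_{l}-\int_{hX_{k}^{-}} f_{l}=0$. The key claim is that $h\ol{X_{k}}$ is contained in a single region where $f_{l}$ is constant. By condition (4), $h\ol{X_{k}}$ is a connected set (the isometric image of a closed geodesic ball) that avoids $\partial X_{l}^{+}\cup\partial X_{l}^{-}$. Hence it lies entirely in $X_{l}^{+}$, entirely in $X_{l}^{-}$, or entirely in $F\setminus(\ol{X_{l}^{+}}\cup\ol{X_{l}^{-}})$. For $l=1$ the relevant boundary is $\partial X_{1}$, which is included in condition (4) because $X_{1}^{+}=X_{1}$. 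So $f_{l}$ equals a constant $c$ on $h\ol{X_{k}}$, and the integral becomes $c(\mr{vol}(hX_{k}^{+})-\mr{vol}(hX_{k}^{-}))=0$, because $h$ is an isometry and $\mr{vol}(X_{k}^{+})=\mr{vol}(X_{k}^{-})$. The case $l>k$ follows by conjugate symmetry of the inner product.

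The step that needs the most care is the connectedness argument in the off-diagonal case. First, one needs that a closed geodesic ball of radius below the injectivity radius is connected; that is what the radius hypothesis is for. Second, one needs that a connected set disjoint from $\partial A$ lies in $\mr{int} A$ or in $F\setminus\ol{A}$; this holds since these two open sets cover $F\setminus\partial A$. Finally, one must check that the index set of $h$'s with nonzero $\ispa{h\psi_{k},\psi_{l}}$ really lies in $P(k)$ and not just in $P(l)$, whichever convention for $h\psi$ is used. This holds because $P(k)$ is symmetric and contains both $P_{k,l}$ and $P_{k,l}^{-1}$ for all $l\le k$. Smoothness is not an issue: the problem refers to $T(\psi_{k},f_{k})$ as sections, and the $L^{2}$ computation of Lemma \ref{IP1} applies verbatim to $f\in L^{2}(F)$.
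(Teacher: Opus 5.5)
Your proof is correct and follows the same route as the paper. It uses Lemma~\ref{IP1} and condition (3) to kill the terms $g\neq e$ on the diagonal; off the diagonal, the connectedness of $g\overline{X_k}$ together with condition (4) makes $f_l$ constant on $g\overline{X_k}$, and the zero mean of $f_k$ finishes. The only difference is bookkeeping: you invoke (4) just for the single index $l$ you pair with, which gives the constancy of $f_l$ at once, whereas the paper decomposes the complement of all the boundaries $\partial X_j^{\pm}$, $j<k$, and runs a case analysis on $i$ versus $j$ (your version also handles $g=e$ uniformly, without appealing to the orthogonality of the $\psi_k$).
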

\begin{proof}
Let $g \in P(1) \setminus \{e\}$. 
Since $g \ol{X_{1}} \cap \ol{X_{1}} =\emptyset$ we have 
$\ispa{\rho(g)f_{1}, f_{1}}_{L^{2}(F)}=0$ and by definition of $f_{1}$, we see $\|f_{1}\|_{L^{2}(F)}=1$. 
Hence by Lemma $\ref{IP1}$, we have 
$\|T(\psi_{1},f_{1})\|_{L^{2}(\mc{E}_{\rho})}=1$. 
We need to show that, for any $k \geq 2$, 
\begin{equation}\label{TNeo}
\ispa{T(\psi_{k}, f_{k}), T(\psi_{i}, f_{i})}_{L^{2}(\mc{E}_{\rho})} = \delta_{i,k} \quad 
(i=1,\ldots,k). 
\end{equation}
We first note that, by (1), $f_{k} \neq 0$ and the integral of $f_{k} \comp \varphi$ 
is zero for any $\varphi \in \mr{Isom}(F)$. 
Let us consider the case $k=2$. Let $g \in P(2) \setminus\{e\}$. 
Since $g\ol{X_{2}}$ is connected, the conditions (3), (4) in Lemma $\ref{TNe}$ show that 
$g\ol{X_{2}} \subset \ol{X_{1}}^{c}$ or $g \ol{X_{2}} \subset X_{1} \setminus \ol{X_{2}}$. 
If $g \ol{X_{2}} \subset \ol{X_{1}}^{c}$, then $\rho(g) f_{2} \cdot f_{1}=0$ because 
$\supp(\rho(g)f_{2})=g \, \supp(f_{2}) \subset g \ol{X_{2}}$ and $\supp (f_{1})=\ol{X_{1}}$.  
If $g \ol{X_{2}} \subset X_{1} \setminus \ol{X_{2}}$, we see $\rho(g) f_{2} \cdot f_{1}=\rho(g)f_{2}$. 
By (3), we have $\rho(g)f_{2} \cdot f_{2}=0$. 
Hence $\ispa{\rho(g)f_{2}, f_{2}}_{L^{2}(F)}=\ispa{\rho(g)f_{2}, f_{1}}_{L^{2}(F)}=0$. 
From this and Lemma $\ref{IP1}$, we have 
\[
\|T(\psi_{2},f_{2})\|_{L^{2}(\mc{E}_{\rho})}^{2} =1, \quad 
\ispa{T(\psi_{2},f_{2}), T(\psi_{1},f_{1})}_{L^{2}(\mc{E}_{\rho})} 
= \ispa{f_{2},f_{1}}_{L^{2}(F)} \ispa{\psi_{2},\psi_{1}}_{L^{2}(B_{H})}=0, 
\]
because $\{\psi_{k}\}_{k=1}^{\infty}$ is an orthonormal system in $L^{2}(B_{H})$. 
Next, we show $\eqref{TNeo}$ for $k \geq 3$. Take $g \in P(k) \setminus \{e\}$. 
By (3), we see $\rho(g) f_{k} \cdot f_{k} =0$, and thus, by Lemma $\ref{IP1}$, we have 
$\|T(\psi_{k},f_{k})\|_{L^{2}(\mc{E}_{\rho})}^{2}=1$. 
We note that, by (2) and $X_{1}^{-}=\emptyset$, the following holds; 
\[
\bigcap_{j=1}^{k-1} (\partial X_{j}^{+} \cup \partial X_{j}^{-})^{c}=
\ol{X_{1}}^{c} \cup X_{k-1}^{+} \cup 
\bigcup_{j=2}^{k-1} X_{j}^{-} \cup \bigcup_{j=2}^{k-1} 
\left[
X_{j-1}^{+} \setminus \left(\ol{X_{j}^{+}} \cup \ol{X_{j}^{-}}\right)
\right], 
\]
and the unions in the right-hand side are all disjoint. Take $g \in P(k) \setminus \{e\}$. 
Then, by (3) and the above formula, we have 
\[
g\ol{X_{k}} \subset \ol{X_{1}}^{c} \cup \left(X_{k-1}^{+} \setminus \ol{X_{k}} \right) \cup 
\bigcup_{j=2}^{k-1} X_{j}^{-} \cup \bigcup_{j=2}^{k-1} 
\left[
X_{j-1}^{+} \setminus \left(\ol{X_{j}^{+}} \cup \ol{X_{j}^{-}}\right)
\right]. 
\]
Since $g\ol{X_{k}}$ is connected, it is contained in one of connected components 
of the set in the right-hand side above. 
Let $i$ be an integer with $1 \leq i \leq k-1$. If $g \ol{X_{k}} \subset \ol{X_{1}}^{c}$, 
then $f_{i}=0$ on $\supp(\rho(g) f_{k})$. Hence $\rho(g)f_{k} \cdot f_{i}=0$ and 
\begin{equation}\label{ZER}
\ispa{\rho(g)f_{k},f_{i}}_{L^{2}(F)}=0. 
\end{equation}
Suppose $g \ol{X_{k}} \subset X_{k-1}^{+} \setminus \ol{X_{k}}$. 
Since $X_{k-1}^{+} \subset X_{i}^{+}$, we have $f_{i}=1$ on $g \ol{X_{k}}$, and hence $\rho(g)f_{k} \cdot f_{i}=\rho(g)f_{k}$ on $F$. 
Since the integral of $\rho(g)f_{k}$ is zero, we have $\eqref{ZER}$. 
Suppose $g \ol{X_{k}} \subset X_{j}^{-}$ for some $j$ with $2 \leq j \leq k-1$. 
If $i<j$, then $\ol{X_{j}^{-}} \subset X_{i}^{+}$, and hence $f_{i}=1$ on $g \ol{X_{k}}$. 
Thus $\rho(g)f_{k} \cdot f_{i}=\rho(g) f_{k}$, and hence we get $\eqref{ZER}$. 
If $i > j$, then $\ol{X_{i}^{-}}  \cup \ol{X_{i}^{+}} \subset X_{i-1}^{+} \subset X_{j}^{+}$. 
Since $X_{j}^{+} \cap X_{j}^{-}=\emptyset$ by the condition (1),  
it holds that $\supp (\rho(g)f_{k}) \cap \supp(f_{i})=\emptyset$. 
Hence $\rho(g) f_{k} \cdot f_{i}=0$ and we get $\eqref{ZER}$. 
If $i=j$, then $f_{i}=-1$ on $\supp(\rho(g)f_{k})$, and hence $\rho(g)f_{k} \cdot f_{i}=-\rho(g)f_{k}$ on $F$. 
Thus we have $\eqref{ZER}$. 
Finally, suppose that $g\ol{X_{k}} \subset X_{j-1}^{+} \setminus \left(\ol{X_{j}^{+}} \cup \ol{X_{j}^{-}} \right)$ 
for some $j$ with $2 \leq j \leq k-1$. If $i \geq j$, then 
\[
\supp(f_{i}) =\ol{X_{i}^{+}} \cup \ol{X_{i}^{-}} \subset \ol{X_{j}^{+}} \cup \ol{X_{j}^{-}}. 
\]
This shows $\rho(g)f_{k} \cdot f_{i}=0$ and hence $\eqref{ZER}$. If $i<j$, then $X_{j-1}^{+} \subset X_{i}^{+}$ 
and hence $f_{i}=1$ on $\supp(\rho(g) f_{k})$. Thus $\rho(g)f_{k} \cdot f_{i}=\rho(g)f_{k}$ and hence $\eqref{ZER}$. 
Therefore, we obtain $\eqref{ZER}$ for any $i$ with $1 \leq i \leq k-1$ and any $g \in P(k) \setminus \{e\}$. 
Again by Lemma $\ref{IP1}$, we have $\eqref{TNeo}$ for $i=1,\ldots,k-1$. This completes the proof. 
\end{proof}
\noindent{\bf Proof of Proposition \ref{ESSP}} 
First of all, we construct sequences of open geodesic balls satisfying the conditions in Lemma $\ref{TNe}$. 
By using the same method as in the proof of Lemma $\ref{IP2}$, 
we can take an open geodesic ball $X_{1}$ small enough so that 
the radius of $X_{1}$ is less than the injectivity radius of $F$ satisfying 
\[
g \ol{X_{1}} \cap \ol{X_{1}} =\emptyset \quad (g \in P(1) \setminus \{e\}). 
\]
We set $X_{1}^{+}=X_{1}$ and $X_{1}^{-}=\emptyset$. 
Next, we take 
\[
x_{1} \in G_{1}=X_{1}^{+} \setminus 
\left[
\bigcup_{g \in P(2)} g^{-1} (\partial X_{1}^{+} \cup \partial X_{1}^{-}) \cup \bigcup_{g \in P(2) \setminus \{e\}} \mr{Fix}(g)
\right]. 
\]
Note that the set in the right-hand side above is nonempty, 
for the subset subtracted from $X_{1}=X_{1}^{+}$ is a finite union of closed submanifolds 
and hence its complement is open and dense in $F$. 
The point $x_{1}$ satisfies, for any $g \in P(2)$, 
\[
g x_{1} \not\in \partial X_{1}, \quad g x_{1} \neq x_{1} \ \  (g \neq e). 
\]
For each $g \in P(2)$, we take connected open neighborhoods 
$U_{g}^{1}$ of $gx_{1}$ and $V_{g}^{1}$ of $x_{1}$, respectively, such that 
$x_{1} \in U_{e}^{1}=V_{e}^{1} \subset \ol{U_{e}^{1}} \subset G_{1}$, and that 
\[
\ol{U_{g}^{1}} \cap \partial X_{1} =\emptyset,\quad \ol{U_{g}^{1}} \cap \ol{V_{g}^{1}} =\emptyset \ \ (g \neq e). 
\]
Since $x_{1} \in g^{-1}U_{g}^{1} \cap V_{g}^{1}$ for any $g \in P(2)$, we can take 
an open geodesic ball $X_{2}$ centered at $x_{1}$ having the radius small enough as before satisfying 
\[
\ol{X_{2}} \subset \bigcap_{h \in P(2)} (h^{-1}U_{h}^{1} \cap V_{h}^{1}). 
\]
We fix such an $X_{2}$. 
Then $\ol{X_{2}} \subset X_{1}$, and, for all $g \in P(2)$, we have 
$g \ol{X_{2}} \cap \partial X_{1}=\emptyset$ and $g \ol{X_{2}} \cap \ol{X_{2}}=\emptyset$ ($g \neq e$). 
Take any open geodesic balls $X_{2}^{\pm} \subset X_{2}$ such that 
\[
\ol{X_{2}^{+}} \cup \ol{X_{2}^{-}} \subset X_{2},\quad 
\ol{X_{2}^{+}} \cap \ol{X_{2}^{-}} =\emptyset,\quad 
0<\mr{vol}(X_{2}^{+})=\mr{vol}(X_{2}^{-}). 
\]
In this way we get $\{X_{k}\}_{k=1}^{2}$, $\{X_{k}^{\pm}\}_{k=1}^{2}$, which satisfy 
the four conditions in Lemma $\ref{TNe}$. 
\par
Let $N$ be an integer with $N \geq 2$ and assume that 
we have constructed sequences $\{X_{k}\}_{k=1}^{N}$, $\{X_{k}^{\pm}\}_{k=1}^{N}$ 
satisfying the four conditions in Lemma $\ref{TNe}$. 
Then we take geodesic balls $X_{N+1}$, $X_{N+1}^{\pm}$ in the following way. 
First, take a point 
\begin{equation}\label{WSEN}
x_{N} \in G_{N}=X_{N}^{+} \setminus 
\left[
\bigcup_{g \in P(N+1)} 
\bigcup_{j=1}^{N} g^{-1} (\partial X_{j}^{+} \cup \partial X_{j}^{-}) \cup 
\bigcup_{g \in P(N+1) \setminus \{e\}} \mr{Fix}(g)
\right]. 
\end{equation}
Then, for any $g \in P(N+1)$, we have 
\[
gx_{N} \not\in \bigcup_{j=1}^{N} \left(\partial X_{j}^{+} \cup \partial X_{j}^{-}\right),\quad 
gx_{N} \neq x_{N} \ \ (g \neq e). 
\]
For $g \in P(N+1)$, we take open neighborhoods $U_{g}^{N}$ of $gx_{N}$ and $V_{g}^{N}$ of $x_{N}$, respectively, 
such that $x_{N} \in U_{e}^{N}=V_{e}^{N} \subset \ol{U_{e}^{N}} \subset G_{N}$, and that 
\[
\ol{U_{g}^{N}} \cap \bigcup_{j=1}^{N} \left( \partial X_{j}^{+} \cup \partial X_{j}^{-}\right)=\emptyset,\quad 
\ol{U_{g}^{N}} \cap \ol{V_{g}^{N}}=\emptyset \ \ (g \neq e). 
\]
Next, we take an open geodesic ball $X_{N+1}$ centered at $x_{N}$ such that 
\[
\ol{X_{N+1}} \subset \bigcap_{h \in P(N+1)} \left(h^{-1}U_{h}^{N} \cap V_{h}^{N}\right). 
\]
We have $g \ol{X_{N+1}} \subset U_{g}^{N}$ and $\ol{X_{N+1}} \subset V_{g}^{N}$ 
for any $g \in P(N+1)$, and thus the following holds; 
\[
g \ol{X_{N+1}} \cap \bigcup_{j=1}^{N} \left(\partial X_{j}^{+} \cup \partial X_{j}^{-}\right)=\emptyset,\quad 
g \ol{X_{N+1}} \cap \ol{X_{N+1}} =\emptyset \ \ (g \neq e). 
\]
Take arbitrary open geodesic balls $X_{N+1}^{\pm} \subset X_{N+1}$ such that 
\[
\ol{X_{N+1}^{+}} \cup \ol{X_{N+1}^{-}} \subset X_{N+1},\quad 
\ol{X_{N+1}^{+}} \cap \ol{X_{N+1}^{-}} =\emptyset,\quad 
0<\mr{vol}(X_{N+1}^{+})=\mr{vol}(X_{N+1}^{-}). 
\]
We inductively construct sequences $\{X_{k}\}_{k=1}^{\infty}$ and $\{X_{k}^{\pm}\}_{k=1}^{\infty}$, and 
these sequences clearly satisfy the four conditions in Lemma $\ref{TNe}$. 
Let $\{f_{k}\}_{k=1}^{\infty}$ be the sequence of $L^{2}$-functions 
on $F$ constructed in Lemma $\ref{TNe}$ so that the sequence of smooth sections 
$\{T(\psi_{k},f_{k})\}_{k=1}^{\infty}$ is an orthonormal system in $L^{2}(\mc{E}_{\rho})$. Now we have 
\[
\|(\Delta_{\rho}-\lambda) T(\psi_{k},f_{k}) \|_{L^{2}(\mc{E}_{\rho})}^{2} 
=\sum_{g \in \mr{Hol(b_{o})}} \ispa{\rho(g)f_{k},f_{k}}_{L^{2}(F)} 
\ispa{g(\Delta_{B_{H}}-\lambda)\psi_{k}, (\Delta_{B_{H}}-\lambda)\psi_{k}}_{L^{2}(B_{H})}.
\]
Since $\supp((\Delta_{B_{H}} -\lambda)\psi_{k}) \subset \supp(\psi_{k})$, 
it is enough to take the sum in the above over all $g \in P(k)$. By the construction of $f_{k}$, 
we have $\ispa{\rho(g)f_{k},f_{k}}_{L^{2}(F)}=0$ for $g \in P(k) \setminus \{e\}$. Hence 
\[
\|(\Delta_{\rho}-\lambda) T(\psi_{k},f_{k}) \|_{L^{2}(\mc{E}_{\rho})}^{2} 
=\|f_{k}\|_{L^{2}(F)}^{2} \|(\Delta_{B_{H}}-\lambda)\psi_{k}\|_{L^{2}(B_{H})}^{2}
=\|(\Delta_{B_{H}}-\lambda)\psi_{k}\|_{L^{2}(B_{H})}^{2} \to 0
\]
as $k \to \infty$. This shows $\lambda \in \spec_{\mr{ess}}(\Delta_{B_{H}})$. 
\hfill$\blacksquare$

\par
\vspace{10pt}
\noindent{\bf Proof of Theorem \ref{BOT}: }\hspace{2pt} 
We have proved the assertion that $\spec(\Delta_{B_{H}}) \subset \spec(\Delta_{H})$. 
Polymerakis shows in \cite{Pol} that the Laplacian on the Riemannian covering over a compact Riemannian 
manifold with infinite covering transformation group does not have an eigenvalue with finite multiplicity. 
Hence Proposition $\ref{ESSP}$ gives 
\[
\spec(\Delta_{B_{H}})=\spec_{\mr{ess}} (\Delta_{B_{H}}) \subset \spec_{\mr{ess}} (\Delta_{\rho}) =\spec_{\mr{ess}} (\Delta_{H}). 
\]
Suppose that $\mr{Hol}(b_{o})$ is amenable. Then by Sunada' theorem (\cite{Su1}, Corollary to Theorem 2) 
$\spec(\Delta_{H})=\spec(\Delta_{\rho}) \subset \spec(\Delta_{B_{H}})$. Hence we have 
the equality $\spec(\Delta_{H})=\spec(\Delta_{B_{H}})$. Furthermore, if $\mr{Hol}(b_{o})$ is infinite and amenable, 
we have 
\[
\spec_{\mr{ess}} (\Delta_{B_{H}})  \subset \spec_{\mr{ess}} (\Delta_{H}) 
\subset \spec(\Delta_{H}) \subset \spec(\Delta_{B_{H}}) = \spec_{\mr{ess}} (\Delta_{B_{H}}), 
\]
from which the equality holds. Therefore, we conclude Theorem $\ref{BOT}$. 
\hfill$\blacksquare$
\par
\vspace{10pt}
\noindent{\bf Proof of Corollary \ref{COR2}:} \hspace{2pt}
In \cite{ON2}, it is proved that, if $\gamma \colon I \to M$ is a geodesic in $M$ 
defined on an interval $I$ such that $\gamma'(t_{o}) \in \mc{H}_{\gamma(t_{o})}$  
at a point $t_{o} \in I$, then $\gamma'(t) \in \mc{H}_{\gamma(t)}$ for all $t \in I$. 
Under our assumption that the horizontal distribution is integrable, 
each leaf of $\mc{H}$ is totally geodesic. 
Let $p \in F$ and let $L$ be the leaf of $\mc{H}$ through $p$. 
Since $M$ is compact and hence complete, $L$ is also complete. 
Since the restriction $\pi_{L} \colon L \to B$ of the submersion 
$\pi \colon M \to B$ to $L$ is locally isomorphic, it turns out that it is a Riemannian covering. 
Let $\mr{Hol}(b_{o})_{p}$ be the stabilizer of $p$ in $\mr{Hol}(b_{o})$. 
Then we have 
\[
\mf{h}^{-1}(\mr{Hol}(b_{o})_{p}) =(\pi_{L})_{\#} \pi_{1}(L,p), 
\]
where $(\pi_{L})_{\#} \colon \pi_{1}(L,p) \to \pi_{1}(B,b_{o})$ is the homomorphism 
induced by $\pi_{L} \colon L \to B$. Therefore, if $\mr{Hol}(b_{o})_{p}=\{1\}$, 
then we have $(\pi_{L})_{\#} \pi_{1}(L,p) =\ker(\mf{h})$. 
From this it follows that $\pi_{L} \colon L \to B$ is isomorphic, as a Riemannian covering space, 
to $\varpi_{H} \colon B_{H} \to B$. Therefore, Corollary $\ref{COR2}$ is a direct 
consequence of Theorem $\ref{BOT}$. 
\hfill$\blacksquare$

\end{document}